\newcommand{\japx}{\langle x\rangle}
\newcommand{\japxi}{\langle\xi\rangle}
\newcommand{\japxih}{\langle\xi\rangle_h}
\newcommand{\partialx}{\partial_x}
\newcommand{\partialxi}{\partial_\xi}
\newcommand{\doublepartial}{\partial_\xi^\alpha\partial_x^\beta}
\def\R{\mathbb R}
\def\C{\mathbb C}
\def\N{\mathbb N}
\newcommand\dslash{d\llap {\raisebox{.9ex}{$\scriptstyle-\!$}}}
\newcommand{\beqsn}{\arraycolsep1.5pt\begin{eqnarray*}}
\newcommand{\eeqsn}{\end{eqnarray*}\arraycolsep5pt}
\newcommand{\beqs}{\arraycolsep1.5pt\begin{eqnarray}}
\newcommand{\eeqs}{\end{eqnarray}\arraycolsep5pt}
\def\ds{\displaystyle}
\newtheorem{theorem}{Theorem}
\newtheorem{lemma}{Lemma}
\newtheorem{proposition}{Proposition}
\newtheorem{remark}{Remark}
\renewcommand{\section}%
   {\setcounter{equation}{0}\@startsection {section}{1}{\z@}{-3.5ex plus -1ex
  minus -.2ex}{2.3ex plus .2ex}{\Large\bf}}
\title[$p$-evolution equations with variable coefficients in Gevrey classes ]{The Cauchy problem for $p$-evolution equations with variable coefficients in Gevrey classes}
\author[A. Arias Junior]{Alexandre Arias Junior}
\address{ Department of Computing and Mathematics, Universidade de S\~ao Paulo,
	Ribeir\~ao Preto,
	Brazil}
\email{alexandre.ariasjunior@usp.br}
\author[A. Ascanelli]{Alessia Ascanelli}
\address{Dipartimento di Matematica ed Informatica\\Universit\`a di Ferrara\\
Via Machiavelli 30\\
44121 Ferrara\\
Italy}
\email{alessia.ascanelli@unife.it}
\author[M. Cappiello]{Marco Cappiello}
\address{Dipartimento di Matematica ``G. Peano'' \\Universit\`a di Torino\\
Via Carlo Alberto 10\\
10123 Torino\\
Italy}
\email{marco.cappiello@unito.it}
\author[E.C. Machado]{Eliakim Cleyton Machado}
\address{Department of Mathematics \\ Federal University of Paran\'a, Caixa Postal 19081 \\ CEP 81531-980, Curitiba, Brazil}
\email{eliakimmachado@ufpr.br}
\thanks{The first author was supported by São Paulo Research Foundation (FAPESP), grant 2022/01712-3. The second and third author were supported by the INdAM-GNAMPA projects CUP E53C22001930001 and CUP E53C23001670001. The fourth author wishes to thank for the financial support granted by Conselho de Desenvolvimento Cientifico e Tecnol\'ogico (CNPq), Brazil, 200229/2023-0, during his sandwich Ph.D. period in Turin, Italy, when part of this paper has been written.}
\begin{document}


\begin{abstract} We study the Cauchy problem for a class of linear evolution equations of arbitrary order with coefficients depending both on time and space variables. Under suitable decay assumptions on the coefficients of the lower order terms for $|x|$ large, we prove a well-posedness result in Gevrey-type spaces.
\end{abstract}

\maketitle

\noindent  \textit{2020 Mathematics Subject Classification}: 35G10, 35S05, 35B65, 46F05 \\

\noindent
\textit{Keywords and phrases}: $p$-evolution equations, Gevrey classes, well-posedness, infinite order pseudodifferential operators

\section{Introduction and main results}\label{sec:introduction}
The class of {\it$p$-evolution equations with real characteristics} 
	has been introduced for the first time in \cite{Mizo}; it is a wide class of partial differential equations arising from physics and includes, fixed an integer $p \geq 2,$ all partial differential equations of the form 
\begin{equation}\label{eq}P(t,x,D_t,D_x)u:=D_tu+\sum_{j=0}^{p}\sum_{|\alpha|=j}a_{j,\alpha}D_x^\alpha u=f,\quad \ t\in[0,T],\ x\in\R^n,\quad D:=-i\partial,\end{equation}
whose principal symbol in the sense of Petrowski $\tau+\sum_{|\alpha|=p}a_{p,\alpha}\xi^\alpha=0$ admits a real root. The coefficients $a_{j,\alpha}$ may depend in general on $(t,x)$ in the linear case and also on $u$ in the nonlinear case.
The Schr\"odinger equation $i\hbar \partial_t u= -\ds\frac{\hbar^2}{2m}\triangle u+Vu,$ $t\in[0,T],\ x\in\R^n$, which models the evolution in time of the  wave function's state $u(t,x)$ of a quantic particle with mass $m$, is the most famous example of $2$-evolution equation  and it has been extensively studied in literature, both from the physical and the mathematical point of view. Going in the nonlinear realm, other relevant examples in the case $p=3, n=1$ are given by the KdV equation 
$D_t u - \frac12 \sqrt{\frac{g}{h}} \sigma D_{x}^3u + \sqrt{\frac{g}{h}} \left( \alpha+ \frac32 u \right) D_x u = 0,$ $t \in \R,\ x\in\R,$
which describes the wave motion in shallow waters, where $u(t,x)$ represents the wave elevation with respect to the water level $h$, and the so-called KdV-Burgers equation $D_t u + 2au D_x u +5ib D_x u -c D_x^3 u=0$ with $a,b,c \in \R$, which appears in the analysis of  both the flow of liquids containing gas bubbles and the propagation of waves in an elastic tube containing a viscous fluid, see \cite{JM}. For $p=5$ we mention as an example the Kawahara equation $D_t u +uD_x u-aD_x^3 u- bD_x^5 u =0$, $a,b >0$, describing magnetoacoustic waves in plasma and long water waves under ice cover, see \cite{Kawahara, Marchenko}. 
\\
All these famous models are studied in general assuming the coefficients independent of $(t,x)$; this is just a simplification, obtained by appro\-xi\-ma\-ting some physical quantities by their main value; in principle, some of the coefficients may depend on $t$ and/or $x$. A dependence on $t$ can be treated via Fourier transform, whereas to deal with equations with $x$-depending coefficients we need to make use of microlocal analysis techniques, in particular of the pseudodifferential calculus.
\\
We underline that the previous examples show that lower order terms may really appear in this class of equations, and they may be either real or complex-valued; complex-valued coefficients naturally arise also in the study of higher order (in $t$) evolution equations, see \cite{CC, beam}. From the previous examples we notice that  also the one space dimensional case $n=1$ is of great physical interest. Finally, since the first step in the study of nonlinear $p$-evolution equations is their linearization, cf. \cite{ABbis}, we shall start considering \textit{linear} $p$-evolution equations.

\medskip
In view of these considerations, in the present paper we focus on equations of the form $Pu=f$ where  
\begin{equation}\label{differential_p_evolution_operator}
	P(t,x,D_t,D_x) = D_t + a_p(t)D_x^p + \sum_{j=1}^p a_{p-j}(t,x)D_x^{p-j}, \quad (t,x)\in[0,T]\times \R,
\end{equation}
under the assumption that the leading coefficient $a_p$ is continuous on $[0,T]$, real-valued and does not vanish on $[0,T]$, whereas for $j=1,...,p-1$, we assume that $a_{p-j} \in C([0,T], \mathcal{B}^\infty(\R;\C))$, where $\mathcal{B}^\infty(\R;\C)$ denotes the space of smooth functions on $\R$ which are bounded with all their derivatives.
We are interested in the Cauchy problem associated with $P$, namely
\begin{equation}\label{cauchy_problem_gevrey}
	\left\lbrace\begin{array}{l}
		P(t,x,D_t,D_x)u(t,x)=f(t,x), \quad (t,x) \in [0,T]\times\R \\ 
		u(0,x) = g(x), \quad x\in\R
	\end{array}, \right.
\end{equation}
and the aim of the paper is to study well-posedness (i.e. existence of a unique solution and continuous dependence of the solution on the Cauchy data) of problem \eqref{cauchy_problem_gevrey} in suitable Gevrey classes.

First of all, we remark that, since $a_p$ is real-valued, $\tau=-a_p(t)\xi^p$ is the real root of the principal symbol of $P$ and the necessary condition $\mathbf{Im} \, a_p(t)\leq 0$ $\forall t\in[0,T]$ for well-posedness in $H^\infty(\R):= \cap_{s \in \R} H^s(\R)$ given in \cite{mizohata2014} is satisfied. Here, as usual, $H^s(\R)$ denotes the standard Sobolev space of all $u \in \mathscr{S}'(\R)$ such that $\langle \xi \rangle^s \hat u(\xi) \in L^2(\R),$ where $\langle \xi \rangle := \sqrt{1+\xi^2}$.
Thus, if well-posedness in $H^\infty$ fails, this is not due to the principal part but to the lower order terms.
\\
If the lower order coefficients are all real-valued, then problem \eqref{cauchy_problem_gevrey} is well-posed in all the classical functional settings ($L^2$, Sobolev spaces, Gevrey type spaces) under suitable decay assumptions on the $x$-derivatives of $a_{p-j}$ for $|x|\to \infty$, see \cite{ascanelli_chiara_zhanghirati_well_posedness_of_cauchy_problem_for_p_evo_equations}. 

The problem becomes challenging when some of the $a_{p-j}, j=1,\ldots, p-1,$ are complex-valued. For instance, in the case $p=2$ a simple computation gives 
\begin{eqnarray*}
		\frac{d}{dt} \| u(t) \|^{2}_{L^{2}} &=& 2\mathbf{Re} \, \left\langle\partial_{t} u(t), u(t)\right\rangle_{L^{2}} 
		\\ 
		&=& 2\mathbf{Re} \, \left\langle iPu(t), u(t)\right\rangle_{L^2} + 2\mathbf{Im} \, \left\langle \{ a_2(t)D_x^2 + a_{1}(t,x)D_x + a_0(t,x)\}u(t), u(t)\right\rangle_{L^2} \\
		&\leq& \|Pu(t)\|^{2}_{L^2} + C\|u\|^{2}_{L^2} + 2\mathbf{Im} \, \left\langle a_{1}(t,x) D_x u(t), u(t) \right\rangle_{L^2}. 
	\end{eqnarray*}
Since $\mathbf{Im} \, a_1(t,x) \neq 0$, the last term of the inequality above does not allow to derive an $L^2$ energy estimate in a straightforward way. It is then necessary to introduce a suitable change of variable which transforms the Cauchy problem \eqref{cauchy_problem_gevrey} into an auxiliary initial value problem for a new operator of the same form as \eqref{differential_p_evolution_operator} but with lower order terms given by positive operators whose contribution can be ignored in the application of the energy method. This approach has been introduced first in \cite{KB} and adapted and generalized in \cite{AAC3evolGelfand-Shilov, AAC3evolGevrey, CRJEECT}. The use of the above mentioned change of variable introduces in the problem relevant technical difficulties, especially in the Gevrey setting, where it requires the use of pseudodifferential operators of infinite order. Before giving more details, let us give an overview of the results existing in the literature on the problem \eqref{cauchy_problem_gevrey} with $P$ as in \eqref{differential_p_evolution_operator}. \\  \indent
The $L^2$ and $H^\infty$ theory are nowadays well understood: in the case $p=2$ necessary and sufficient conditions for well-posedness in the Sobolev setting have been given in \cite{ichinose_remarks_cauchy_problem_schrodinger_necessary_condition, I2, KB, mizohata2014}, while the general case $p\geq 2$ has been studied in \cite{ascanelli_chiara_zanghirati_necessary_condition_for_H_infty_well_posedness_of_p_evo_equations, ascanelli_chiara_zhanghirati_well_posedness_of_cauchy_problem_for_p_evo_equations }, see also \cite{ABbis} for the semilinear case. In particular, by \cite{ascanelli_chiara_zanghirati_necessary_condition_for_H_infty_well_posedness_of_p_evo_equations} we know that a necessary condition for well-posedness of \eqref{cauchy_problem_gevrey} in $H^\infty(\R)$ is the existence of
constants $M,N>0$ such that:
\begin{equation}
\label{CN2}
\sup_{x\in\R}\min_{0\leq\tau\leq t\leq T}\int_{-\varrho}^\varrho 
\mathbf{Im} \, a_{p-1}(t,x+p a_p(\tau)\theta)d\theta\leq M\log(1+\varrho)+N,\qquad 
\forall \varrho>0.
\end{equation}
\\
On the contrary in the Gevrey setting, the question of giving sufficient conditions for well posedness has been explored only in the cases $p=2$ in \cite{ACR, KB, CRJEECT} and recently in \cite{AAC3evolGevrey} for $3$-evolution equations. To describe properly the results in the Gevrey setting, we need to recall the Gevrey-Sobolev spaces.\\ 

Fixed $\theta \geq 1, m, \rho \in \R,$  we set
$$
H^{m}_{\rho; \theta} (\R) = \{ u \in \mathscr{S}'(\R) :  \langle D \rangle^{m} 
e^{\rho \langle D \rangle^{\frac{1}{\theta}}} u \in L^{2}(\R) \},
$$
where $\langle D \rangle^m$ and $e^{\rho \langle D \rangle^{\frac{1}{\theta}}}$ are the Fourier multipliers with symbols $\langle \xi \rangle^m$ and $e^{\rho \langle \xi \rangle^{\frac{1}{\theta}}}$ respectively.
These spaces are Hilbert spaces with the following inner product
$$\langle u, v \rangle_{H^{m}_{\rho;\theta}} = \, \langle\langle D \rangle^{m} e^{\rho\langle D \rangle^{\frac{1}{\theta}}}u, \langle D \rangle^{m} e^{\rho\langle D \rangle^{\frac{1}{\theta}}}v \rangle_{L^{2}}, \quad u, v \in H^{m}_{\rho;\theta}(\R).
$$
We set
$$ \mathcal{H}^\infty_{\theta} (\R): = \bigcup_{\rho >0}H^m_{\rho; \theta}(\R). $$
The following inclusions hold:
$$ G_0^\theta(\R) \subset  \mathcal{H}^\infty_{\theta} (\R) \subset G^\theta(\R),$$
where $G^\theta(\R)$ 
denotes the space of all smooth functions $f$ on $\R$ such that 
\begin{equation}
	\label{gevestimate}
	\sup_{\alpha \in \N_0} \sup_{x \in \R} h^{-|\alpha|} \alpha!^{-\theta} |\partial^\alpha f(x)| < +\infty
\end{equation} 
for some $h >0$ and $G_0^\theta(\R)$ is the space of all compactly supported functions contained in $G^\theta(\R)$.

 Coming now to necessary conditions in the Gevrey setting,
the first three authors in \cite{AACpevolGevreynec} have proved for the operator \eqref{differential_p_evolution_operator} the following:

\medskip
\noindent {\it {\bf Theorem.} If the Cauchy problem \eqref{cauchy_problem_gevrey} is well-posed in $\mathcal{H}^{\infty}_{\theta}(\R)$, $\theta > 1$, and:
	\begin{itemize}
		\item [(i)] there exist $R, A > 0$ and $\sigma_{p-j} \in [0,1]$, $j = 1, \ldots, p-1,$ such that 
		$$
		\mathbf{Im} \, a_{p-j}(t,x) \geq A \langle x \rangle^{-\sigma_{p-j}}, \quad x > R \, (\textrm{or} \, \, \, x <-R), \, t \in [0,T], \, j=1,\ldots,p-1; $$
		\item [(ii)] there exists $C > 0$ such that for every $\beta \in \N$:
		$$
		|\partial^{\beta}_{x}a_{p-j}(t,x)| \leq C^{\beta + 1} \beta! \langle x \rangle^{-\beta}, \quad x \in \R, \, t \in [0,T],\quad j = 1, \ldots, p,
		$$
	\end{itemize}
 then 
	\begin{equation}\label{equation_thesis_main_theorem}
		\Xi := \max_{j=1,\ldots,p-1} \{(p-1)(1-\sigma_{p-j}) - j + 1\} \leq \frac{1}{\theta}.
	\end{equation}
}
The result above gives some necessary conditions on the decay rates of the coefficients for the Gevrey well-posedness. Since $\Xi \geq 0$ and $\frac{1}{\theta} < 1$, the following considerations are a consequence of \eqref{equation_thesis_main_theorem}:
\begin{itemize}
\item[-]  if $\sigma_{p-j} \leq \frac{p-1-j}{p-1}$ for some $j=1,\ldots, p-1,$ the Cauchy problem is not well-posed in $\mathcal{H}^{\infty}_{\theta}(\R)$ ;
\item[-] if $\sigma_{p-j} \in \left( \frac{p-1-j}{p-1},  \frac{p-j}{p-1}\right)$ for some $j=1,\ldots, p-1,$ then the power $\sigma_{p-j}$ imposes the restriction 
		$
		(p-1)(1-\sigma_{p-j}) - j + 1 \leq \frac{1}{\theta}
		$
		for the indices $\theta$ where $\mathcal{H}^{\infty}_{\theta}(\R)$ well-posedness can be found;
\item[-] if 
 $\sigma_{p-j} \geq \frac{p-j}{p-1}$ for some $j=1,\ldots, p-1,$ then the power $\sigma_{p-j}$ has no effect on the $\mathcal{H}^{\infty}_{\theta}(\R)$ well-posedness.
\end{itemize}
With this result in mind we can better understand the sufficient conditions given in \cite{KB,AAC3evolGevrey} for the cases $p=2,3.$ In short, in the case $p=2$ in \cite{KB} the coefficient $a_1$ of the lower term is assumed to be Gevrey regular of order $\theta _0 >1$ and decaying at infinity like $|x|^{-\sigma}$ for some $\sigma \in (0,1)$ and well-posedness in $\mathcal{H}_\theta^\infty(\R)$ is achieved for $\theta_0 \leq \theta < (1-\sigma)^{-1}$. In the case $p=3$, in \cite{AAC3evolGevrey}, assuming $\sigma \in (1/2,1), \theta_0 < \frac{1}{2(1-\sigma)}$ and 
\begin{itemize}
	\item [(i)] $a_3 \in C([0,T]; \R)$ and $\exists C_{a_3} > 0$ such that $|a_3(t)| \geq C_{a_3}\ \forall t \in [0,T]$,
	\item [(ii)] $a_{p-j} \in C([0,T]; G^{\theta_0}(\R))$, $\theta_0 > 1$, for $j = 1,2,3$,
	\item [(iii)] $\exists C_{a_2} > 0$ such that  $$|\partial^{\beta}_{x} a_2(t,x)| \leq C^{\beta+1}_{a_2} \beta!^{\theta_0} \langle x \rangle^{-\sigma}, \quad \forall t \in [0,T], \, x  \in \R, \beta \in \N_{0},$$
	\item [(iv)] $\exists C_{a_1}$ such that $|\mathbf{Im} \, a_1(t,x)| \leq C_{a_1}  \langle x \rangle^{-\frac{\sigma}{2}}$ for every $t \in [0,T], \, x \in \R$,
\end{itemize}
well-posedness in $\mathcal{H}^\infty_\theta(\R)$ for $\theta \in \left[ \theta_0, \frac1{2(1-\sigma)}\right)$ holds.

\medskip
In the present paper, taking into account the necessary conditions proved in \cite{AACpevolGevreynec}, we generalize the results in \cite{AAC3evolGevrey} to the $p$-evolution operator \eqref{differential_p_evolution_operator}, where $ p \geq 2$. We underline that the generalization is far from being straightforward: it needs a careful use of sharp G{\aa}rding inequality, several nontrivial technical steps and iterative procedures.
\\ \indent 
 The main result of the paper reads as follows.

\begin{theorem}\label{maintheorem1}
	Let $\theta_0>1$ and $\sigma \in \left(\frac{p-2}{p-1},1\right)$ such that $\theta_0 < \frac{1}{(p-1)(1-\sigma)}$. Let $P$ be an operator of the type \eqref{differential_p_evolution_operator} whose coefficients satisfy the following assumptions:
	\begin{itemize}	\item[\textup{(i)}] $a_p \in C([0,T];\R)$ and there exists $C_{a_p}>0$ such that $|a_p(t)| \geq C_{a_p}$, for all $t \in [0,T]$.
		\item[\textup{(ii)}] $|\partialx^\beta a_{p-j}(t,x)| \leq C_{a_{p-j}}^{\beta+1} \beta!^{\theta_0} \japx^{-\frac{p-j}{p-1}\sigma-\beta}$, for some $C_{a_{p-j}}>0$, $j=1,...,p-1$ and for all $\beta \in \N_0$, $(t,x) \in [0,T] \times \R$.
	\end{itemize}
	If $\theta>1$ is such that $\theta_0 \leq \theta < \frac{1}{(p-1)(1-\sigma)}$, the data $f \in C\left([0,T];H_{\rho;\theta}^m(\R)\right)$ and $g \in H_{\rho;\theta}^m(\R)$, with $m,\rho\in\R$ and $\rho>0$, then the Cauchy problem \eqref{cauchy_problem_gevrey} admits a unique solution $u \in C\left([0,T];H_{\tilde \rho;\theta}^m(\R)\right)$ for some $\tilde \rho \in (0,\rho)$, and the solution satisfies the energy estimate
	\begin{equation}\label{energy_estimate_cauchy_gevrey}
		\| u(t) \|_{H_{\tilde \rho;\theta}^m}^2 \leq C \left( \| g \|^2_{H_{\rho;\theta}^m} + \int_0^t \| f(\tau) \|_{H_{\rho;\theta}^m}^2 d\tau \right),
	\end{equation}
	for all $t\in[0,T]$ and for some constant $C>0$. In particular, for $\theta\in\left[\theta_0,\frac{1}{(p-1)(1-\sigma)}\right)$ the Cauchy problem \eqref{cauchy_problem_gevrey} is well-posed in $\mathcal{H}_\theta^\infty(\R)$.
\end{theorem}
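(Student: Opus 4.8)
The plan is to follow the classical conjugation scheme used in this circle of problems: transform the Cauchy problem \eqref{cauchy_problem_gevrey} by a pseudodifferential operator so that the obstruction created by the (nonzero) imaginary parts of the lower order coefficients is absorbed, then close an $L^{2}$ energy estimate by means of the sharp G\aa rding inequality, and finally return to the original unknown. Since the coefficients are bounded together with all their $x$-derivatives, only large frequencies matter, so one fixes a cut-off $\psi(\xi)$ supported in $|\xi|\ge 1$ and equal to $1$ for $|\xi|\ge 2$. The first step is to look for a real symbol $\lambda(t,x,\xi)$ and to set $v(t,x)=e^{\Lambda(t)}e^{\rho\langle D\rangle^{1/\theta}}\langle D\rangle^{m}u(t,x)$, with $\Lambda(t)=\lambda(t,x,D_x)$, so that $v$ solves a Cauchy problem for the conjugated operator $P_\Lambda:=e^{\Lambda}e^{\rho\langle D\rangle^{1/\theta}}\langle D\rangle^{m}\,P\,\langle D\rangle^{-m}e^{-\rho\langle D\rangle^{1/\theta}}e^{-\Lambda}$, which is again of the form $D_t+a_p(t)D_x^{p}+(\text{lower order})$ but whose lower order part has nonnegative real part modulo an operator bounded on $L^{2}$ (with a controlled loss in the weights). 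The leading commutator $[\Lambda,a_p(t)D_x^{p}]$ contributes, at the principal level, a term proportional to $i\,p\,a_p(t)\,\xi^{p-1}\,\partial_x\lambda$; hence choosing the leading term of $\lambda$ proportional to $-\big(p\,a_p(t)\big)^{-1}\big(\int_0^x \mathbf{Im}\,a_{p-1}(t,y)\,dy\big)\psi(\xi)$ cancels the dangerous term $i\,\mathbf{Im}\,a_{p-1}(t,x)\,\xi^{p-1}$ in $P_\Lambda$.

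Once the bad term at order $\xi^{p-1}$ is killed, the commutators of $a_p(t)D_x^{p}$ with the already-constructed part of $\Lambda$, together with the contributions of $a_{p-2},\dots,a_{1}$, generate new terms at orders $\xi^{p-2},\dots,\xi$, whose imaginary parts must in turn be cancelled by further summands $\lambda_{p-2},\dots,\lambda_{1}$ of $\lambda$: this forces an iterative construction. Assumption (ii) is calibrated exactly for this purpose: the decay $\langle x\rangle^{-\frac{p-j}{p-1}\sigma}$ of $a_{p-j}$ makes the primitive defining $\lambda_{p-j}$ grow at most like $\langle x\rangle^{1-\frac{p-j}{p-1}\sigma}$ (up to a bounded factor), which keeps each $\lambda_{p-j}$ in the symbol class needed for the scheme to be consistent, and the Gevrey regularity of order $\theta_0$ of the $a_{p-j}$ is inherited by $\lambda$ and all its correctors. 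The hypotheses $\sigma\in\big(\tfrac{p-2}{p-1},1\big)$ and $\theta_0\le\theta<\tfrac1{(p-1)(1-\sigma)}$ are precisely what is needed for the resulting $\Lambda$ to define an operator of admissible (infinite) order on the scale $\{H^{m}_{\rho;\theta}\}$: they force $(p-1)(1-\sigma)<1$, so the $x$-growth of $\lambda$ stays below the threshold compatible with the exponential weight $e^{\rho\langle D\rangle^{1/\theta}}$, and they reconcile the Gevrey index of the coefficients with the target index $\theta$.

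One then establishes, using the calculus of infinite order pseudodifferential operators in Gevrey classes, the key mapping properties: for every $\rho>0$ there is $\tilde\rho\in(0,\rho)$ with $e^{\pm\Lambda(t)}\colon H^{m}_{\rho;\theta}(\R)\to H^{m}_{\tilde\rho;\theta}(\R)$ bounded uniformly in $t\in[0,T]$, $e^{\Lambda}e^{-\Lambda}=e^{-\Lambda}e^{\Lambda}=I+R$ with $R$ smoothing with a small loss, and $t\mapsto\Lambda(t)$ continuous with values in the relevant operator space. For a solution $v$ of $P_\Lambda v=f_\Lambda$, $v(0)=g_\Lambda$, one computes $\frac{d}{dt}\|v(t)\|_{L^{2}}^{2}=2\mathbf{Re}\,\langle iP_\Lambda v,v\rangle_{L^{2}}+2\mathbf{Im}\,\big\langle\big(a_p(t)D_x^{p}+\text{l.o.t.}\big)v,v\big\rangle_{L^{2}}$; the term with $a_p(t)D_x^{p}$ is self-adjoint modulo lower order, the bad term of order $\xi^{p-1}$ has been removed, and what remains — having nonnegative real part up to a symbol of lower order, in the Gevrey-type H\"ormander class where the conjugated symbols live — is bounded below by $-c\|v\|_{L^{2}}^{2}$ via the sharp G\aa rding inequality. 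Combined with Cauchy--Schwarz on $\langle iP_\Lambda v,v\rangle_{L^{2}}$, this gives, by Gronwall, $\|v(t)\|_{L^{2}}^{2}\le C\big(\|g_\Lambda\|_{L^{2}}^{2}+\int_0^{t}\|f_\Lambda(\tau)\|_{L^{2}}^{2}\,d\tau\big)$.

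Finally, the mapping properties above turn this into the estimate \eqref{energy_estimate_cauchy_gevrey} for $u$, with the loss $\rho\to\tilde\rho$ and the remainder $R$ absorbed. Uniqueness is the case $f=0$, $g=0$. For existence one runs the same a priori estimate for the transposed operator ${}^{t}P$, which is of the same type, and concludes by the standard duality (Hahn--Banach) argument, after a regularization of the coefficients in the time variable to compensate for their mere continuity in $t$; this produces a solution $u\in C([0,T];H^{m}_{\tilde\rho;\theta}(\R))$ obeying the estimate, and well-posedness in $\mathcal H^{\infty}_{\theta}(\R)=\bigcup_{\rho>0}H^{m}_{\rho;\theta}(\R)$ follows immediately. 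The main difficulty is concentrated in the first two steps: producing $\lambda$ together with all its iterated correctors and tracking, within the infinite order Gevrey calculus, the simultaneous $x$-growth (from integrating the decaying coefficients) and $\xi$-growth (accumulated through the iteration) of the conjugated symbols, so that $e^{\pm\Lambda}$ is well defined and invertible between Gevrey--Sobolev spaces with a quantitatively small loss and so that the sharp G\aa rding inequality genuinely applies to $P_\Lambda$.
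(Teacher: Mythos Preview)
Your overall strategy---conjugate by an infinite-order operator, apply sharp G\aa rding, close an $L^2$ energy estimate, and transfer back---matches the paper. But two concrete ingredients are missing from your construction of $\Lambda$, and without them the argument does not close.

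First, your $\lambda_{p-1}\sim -(p\,a_p)^{-1}\!\int_0^x\mathbf{Im}\,a_{p-1}(t,y)\,dy\cdot\psi(\xi)$ grows like $\langle x\rangle^{1-\sigma}$ with no compensating $\xi$-structure; the resulting $e^{\Lambda}$ is then not a well-defined operator between the spaces $H^m_{\rho;\theta}$, which are built from $\xi$-weights only. The paper inserts an additional cutoff $\psi\big(\langle y\rangle/\langle\xi\rangle_h^{p-1}\big)$ \emph{inside} the integral, restricting the domain of integration to $|y|\lesssim\langle\xi\rangle_h^{p-1}$; this converts the $x$-growth into a $\xi$-order of at most $(p-1)(1-\sigma)$, so that $\Lambda\in\mathbf{S}^{(p-1)(1-\sigma)}_\mu\cap\mathbf{SG}^{0,1-\sigma}_\mu$ and $e^{\pm\Lambda}$ acts on $H^m_{\rho;\theta}$ with the small loss $\rho\to\tilde\rho$ (since $(p-1)(1-\sigma)<1/\theta$). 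The hypothesis $\sigma>\tfrac{p-2}{p-1}$ is used here, but for the fractional order, not for bounding $x$-growth by $1$ as you suggest.

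Second, you omit the time-dependent Fourier multiplier $e^{K(T-t)\langle D\rangle_h^{(p-1)(1-\sigma)}}$. After the conjugation by $e^\Lambda$ and the level-by-level application of sharp G\aa rding at orders $p-1,\dots,1$, there remains a term of order $(p-1)(1-\sigma)>0$ (coming in particular from the region where the extra cutoff is active); this is not bounded on $L^2$. Conjugating also by $e^{K(T-t)\langle D\rangle_h^{(p-1)(1-\sigma)}}$ produces, via $\partial_t$, an extra $+K\langle D\rangle_h^{(p-1)(1-\sigma)}$ in the operator, and choosing $K$ large makes the combined symbol nonnegative so that one last application of sharp G\aa rding reduces everything to an order-zero remainder.

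There is also a difference in philosophy worth noting: you propose to \emph{cancel} $\mathbf{Im}\,a_{p-j}$ exactly by primitives, whereas the paper \emph{dominates} it. The paper's $\lambda_{p-k}$ are $M_{p-k}$ times a fixed model symbol with the right decay; the constants $M_{p-1},M_{p-2},\dots$ are then chosen large, in that order, so that at level $p-j$ the contribution $p\,M_{p-j}\,|a_p(t)|\,|\xi|^{p-1}\langle\xi\rangle_h^{1-j}\langle x\rangle^{-\frac{p-j}{p-1}\sigma}$ dominates not only $\mathbf{Im}\,a_{p-j}$ but also the remainders $d_{p-j}$ created by the previous steps and the sharp-G\aa rding remainder $\tilde r^{[p-j]}$ from level $p-j+1$ (all of which decay like $\langle x\rangle^{-\frac{p-j}{p-1}\sigma}$ by design). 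Sharp G\aa rding is then applied at each level separately, and its order-$(p-j-1)$ remainder is fed into the next step. A single application at the end, as you sketch, would not suffice. Existence in the paper follows directly from $H^m$ well-posedness of the auxiliary problem, without duality or regularization in $t$.
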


\begin{remark}
	Notice that the assumption (ii) of Theorem \ref{maintheorem1} is not sufficient to guarantee well-posedness in $L^2(\R)$, nor in $H^\infty(\R)$ for the Cauchy problem \eqref{cauchy_problem_gevrey}, since the necessary condition \eqref{CN2} is not satisfied in general under this assumption.
\end{remark}

\begin{remark}
Comparing the assumptions of Theorem 1 with the known literature for 3-evolution equations, see e.g. \cite[Theorem 1.2]{AAC3evolGevrey}, we notice that our decay at infinity condition (ii) is given on the whole coefficients of the lower order terms (without distinguishing the behavior of the real and of the imaginary parts) and prescribes for the derivatives of the coefficients a decay which increases with the order of the derivatives. This extra decay  is used in Subsection 4.1 to absorb a growth in $x$ of some terms appearing for instance in \eqref{case_1_equation_4} and \eqref{case_1_equation_5}. Without assuming the decay of the derivatives, the estimate of these terms is possible but it is more involved. Moreover, under the condition (ii), the symbols $a_{p-j}(t,x)\xi^{p-j}$ are symbols in $\textrm{\textbf{SG}}$ classes (cf. page 6 for the definition), and this allows to apply \cite[Theorem 6]{ACsharpGarding} in Section \ref{estimatesfrombelow}. A version of this theorem for Gevrey regular H\"ormander symbols with a precise estimate of the regularity of the remainders is still missing in the literature as far as we know although it is a somewhat expected result. In conclusion, we believe that we could have distinguished conditions on real and imaginary parts of the coefficients as in \cite[Theorem 1.1]{ascanelli_chiara_zhanghirati_well_posedness_of_cauchy_problem_for_p_evo_equations} or \cite[Theorem 1.2]{AAC3evolGevrey}, and we could have assumed (ii) only for a finite number of derivatives or avoided the extra decay for the $x$-derivatives, but we preferred to skip these refinements in order to not add further technicality to the proof and to work in the frame of the standard $\textrm{\textbf{SG}}$ calculus. \end{remark}


\begin{remark} In this paper the results concern $p$-evolution equations in one space dimension. The main difficulty in the extension to the higher space dimensional case consists in the definition of an appropriate change of variable which becomes much more involved in this case. At this moment the only results concerning evolution operators with coefficients defined on $[0,T] \times \R^n, n > 1,$ are limited to the case $p=2$, cf. \cite{Arias_GS, ACJMPA, KB, CRJEECT}. 
\end{remark}

The paper is organized as follows. In section \ref{sec:preliminaries} we introduce some classes of pseudodifferential operators that we will use to prove our main result. As we are dealing with Gevrey classes, some of these operators will be of infinite order, that is, characterized by an exponential growth of the symbol.
In Section \ref{sec:changeofvariable} we introduce a suitable change of variable which allows to reduce the Cauchy problem \eqref{cauchy_problem_gevrey} to an auxiliary initial value problem via a conjugation. Section \ref{conjugation} is devoted to performing this conjugation. In Section \ref{estimatesfrombelow} we prove that the new problem obtained is well-posed in standard Sobolev spaces. 
In the last section, going back with the inverse change of variable, we  prove Theorem \ref{maintheorem1} 
and close the paper with a remark concerning the sufficiency counterpart of Theorem $1$ of \cite{AACpevolGevreynec} (see Remark \ref{bramble_scramble}).

\section{Pseudodifferential operators} \label{sec:preliminaries}
In this section we introduce the pseudodifferential operators that we will employ in the study of the Cauchy problem \eqref{cauchy_problem_gevrey}. Although we shall apply these tools to a problem in one space dimension in the present paper, the next definitions and results are valid in arbitrary dimension so we prefer to give them in this general case in view of future applications. More details can be found in \cite{CappielloRodino, Rodino_linear_partial_differential_operators_in_gevrey_spaces}.
\\

Fixed $ \mu > 1$, $A>0$ and $m,m_1,m_2 \in \R$ we will consider the following Banach spaces: 
$$p(x,\xi) \in {\bf S}^{m}_{\mu}(\R^{2n};A) \iff 
\sup_{\overset{\alpha, \beta \in \N_{0}^{n}}{x,\xi \in \R^{n}}} |\partial_\xi^\alpha \partial_x^\beta p(x,\xi)| A^{-|\alpha+\beta|} (\alpha! \beta!)^{-\mu} \langle \xi \rangle^{-m+|\alpha|} < +\infty,
$$
$$
p(x,\xi) \in \tilde{{\bf S}}^{m}_{\mu}(\R^{2n};A) \iff |p|_{A} := \sup_{\overset{\alpha, \beta \in \N_{0}^{n}}{x,\xi \in \R^{n}}} |\partial_\xi^\alpha \partial_x^\beta p(x,\xi)| A^{-|\alpha+\beta|} (\alpha! \beta!)^{-\mu} \langle \xi \rangle^{-m} < +\infty,
$$
$$ p \in \textrm{\textbf{SG}}^{m_1,m_2}_{\mu}(\R^{2n}; A) \Leftrightarrow 
\sup_{\stackrel{\alpha, \beta \in \N^{n}_{0}}{x,\xi \in \R^{n}} }| \partial_\xi^\alpha \partial_x^\beta p(x,\xi) |A^{-|\alpha+\beta|} (\alpha!\beta!)^{-\mu} \langle \xi \rangle^{-m_1+|\alpha|} \langle x \rangle^{-m_2+|\beta|}  <+\infty.
$$ Since the classes above become larger when $A$ increases, we can set  
\begin{equation}\label{indsymb1}{\bf S}^m_{\mu}(\R^{2n}):= \bigcup_{A>0}{\bf S}^{m}_{\mu}(\R^{2n};A), \qquad \tilde{{\bf S}}^m_{\mu}(\R^{2n}):= \bigcup_{A>0}\tilde{{\bf S}}^{m}_{\mu}(\R^{2n};A),\end{equation}
\begin{equation}\label{indsymb2} \textbf{\textrm{SG}}^{m_1,m_2}_{\mu}(\R^{2n}):= \bigcup_{A>0}\textrm{\bf SG}^{m_1,m_2}_{\mu}(\R^{2n};A) \end{equation} endowed with the inductive limit topology.

Notice that the classes $\textrm{\bf SG}^{m_1, m_2}_\mu(\R^{2n})$ are characterized by the fact of having two orders, related to the behavior with respect to the variables $\xi$ and  $x$ respectively. We shall frequently refer to these orders as order w.r.t $\xi$ and order w.r.t $x$ in the sequel. \\
 
Given a symbol $p \in {\bf \tilde S}^{m}_{\mu}(\R^{2n})$ we denote by $p(x,D)$ or by $\mathbf{op}(p)$ the pseudodifferential operator 
\begin{equation} \label{pseudop}
	p(x,D) u (x) = \int e^{i\xi x} p(x,\xi) \widehat{u}(\xi) \dslash\xi, \quad u \in \mathscr{S}(\R^{n}),
\end{equation}
where $\dslash\xi = (2\pi)^{-n}d\xi$.
 From the classical theory of pseudodifferential operators, $p(xD)$ extends to a linear and continuous operator from $H^{m'}(\R^n)$ to $H^{m'-m}(\R^n)$. For our purposes, we need to recall the action of $p(x,D)$ on the Gevrey-Sobolev spaces defined in the Introduction. We have the following result, cf. \cite[Proposition 6.2]{KN}.

\begin{proposition}\label{prop_continuity_finite_order_gevrey_sobolev}
	Let $p \in \tilde{{\bf S}}^{m}_{\mu}(\R^{2n};A)$ for some $A>0$. Then:
	\begin{itemize}
		\item[i)] $p(x,D)$  maps continuously $H^{m+m'}_{\rho;\theta}(\R^n)$ into $H^{m'}_{\rho;\theta}(\R^n)$ for every $m' \in \R, \rho \in \R$ and $\theta >\mu$.
		\item[ii)] There exists $\delta >0$ such that if $|\rho|< \delta A^{-1/\theta}$, $p(x,D)$ maps continuously  $H^{m+m'}_{\rho;\mu}(\R^n)$ into $H^{m'}_{\rho;\mu}(\R^n).$
		\end{itemize}
\end{proposition}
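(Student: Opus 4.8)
The plan is to establish Proposition \ref{prop_continuity_finite_order_gevrey_sobolev} by reducing the mapping property on Gevrey--Sobolev spaces to a continuity statement for a composed pseudodifferential operator with \emph{finite} order symbol, and then invoke the classical $L^2$-boundedness theorem. Concretely, to prove that $p(x,D)$ maps $H^{m+m'}_{\rho;\theta}(\R^n)$ into $H^{m'}_{\rho;\theta}(\R^n)$ it suffices, by the very definition of these spaces, to show that the operator
$$
Q := \langle D\rangle^{m'} e^{\rho\langle D\rangle^{1/\theta}} \circ p(x,D) \circ e^{-\rho\langle D\rangle^{1/\theta}} \langle D\rangle^{-(m+m')}
$$
is bounded on $L^2(\R^n)$. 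Thus the core of the argument is a symbolic calculus computation: one must show that $Q = q(x,D)$ for some symbol $q \in \tilde{\mathbf S}^0_\mu(\R^{2n};A')$ (with $A'$ depending on $A,\rho,\theta$), after which part i) follows from the standard Calder\'on--Vaillancourt / Kumano-go boundedness theorem applied to $q$, since $\theta>\mu$ guarantees we stay in the right Gevrey symbol class.

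The key steps, in order, would be: \textbf{(1)} Record the symbolic properties of the conjugating multipliers $\langle D\rangle^s$ and $e^{\pm\rho\langle D\rangle^{1/\theta}}$; the former has symbol in $\mathbf S^s_\mu$, while the latter has a symbol of \emph{infinite order} but of the special form $e^{\pm\rho\langle\xi\rangle^{1/\theta}}$, whose $\xi$-derivatives gain the crucial factor $\langle\xi\rangle^{-|\alpha|}$ up to combinatorial constants, because $1/\theta<1$. \textbf{(2)} Compose $e^{\rho\langle D\rangle^{1/\theta}}$ with $p(x,D)$: since $e^{\rho\langle\xi\rangle^{1/\theta}}$ depends only on $\xi$, this composition is handled by the asymptotic expansion $\sum_\alpha \frac{1}{\alpha!}\partial_\xi^\alpha(e^{\rho\langle\xi\rangle^{1/\theta}}) D_x^\alpha p(x,\xi)$, which one controls by combining the Gevrey bounds on $p$ with the derivative bounds on the exponential from step (1); here the restriction $\theta>\mu$ ensures the series converges in a Gevrey symbol class. \textbf{(3)} Conjugate back by $e^{-\rho\langle D\rangle^{1/\theta}}$ on the right: the commutator-type expansion again produces a convergent series because each $\xi$-derivative of $e^{-\rho\langle\xi\rangle^{1/\theta}}$ costs almost nothing in terms of $\langle\xi\rangle$-powers while differentiating $p$ in $x$ keeps us within the symbol class. \textbf{(4)} Absorb the powers $\langle D\rangle^{m'}$ and $\langle D\rangle^{-(m+m')}$, which is routine, to conclude that $Q$ has a symbol in $\tilde{\mathbf S}^0_\mu(\R^{2n};A')$, and then apply $L^2$-boundedness. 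For part ii), the exact same computation is carried out but keeping track of $\mu$-Gevrey (rather than $\theta$-Gevrey) symbols; now the exponential factors $e^{\pm\rho\langle\xi\rangle^{1/\mu}}$ are ``critical'' and the series in steps (2)--(3) converge only if the radius $|\rho|$ is small relative to $A^{-1/\theta}$, which is precisely the smallness hypothesis $|\rho|<\delta A^{-1/\theta}$; one tracks $\delta$ through the geometric-type majorant of the symbol series.

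The main obstacle I expect is step (2)--(3): controlling the asymptotic expansion of the composition of $p(x,D)$ with the infinite-order multiplier $e^{\pm\rho\langle D\rangle^{1/\theta}}$ and showing the resulting symbol series converges to an element of the appropriate Gevrey symbol class with explicit control of the constant $A'$ (and, for ii), of the threshold $\delta$). This requires the sharp Faà di Bruno–type estimates for $\partial_\xi^\alpha e^{\pm\rho\langle\xi\rangle^{1/\theta}}$ — showing these are bounded by $C^{|\alpha|}\alpha!^{\,\theta}\langle\xi\rangle^{-|\alpha|}e^{\pm\rho\langle\xi\rangle^{1/\theta}}$ up to adjusting $\rho$ — together with a careful summation of the double series in $\alpha$ and in the number of derivatives landing on $p$. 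Once these bounds are in hand the rest is bookkeeping, and the continuity of the finite-order symbol operator on $L^2$ (hence the claimed mapping properties on the Gevrey--Sobolev scale) follows; I would cite \cite{CappielloRodino, KN} for the precise form of the symbolic calculus with infinite-order symbols that makes this rigorous.
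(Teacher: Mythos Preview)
The paper does not actually prove this proposition: it is stated with the remark ``cf.\ \cite[Proposition 6.2]{KN}'' and no further argument. Your sketch is essentially the standard proof one finds in that reference --- reduce to $L^2$-boundedness of the conjugated operator $Q$ by showing its symbol lies in $\tilde{\mathbf S}^0_\mu$, using Fa\`a di Bruno estimates on $\partial_\xi^\alpha e^{\pm\rho\langle\xi\rangle^{1/\theta}}$ --- so in content your approach matches the cited source.

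One small simplification: your step (3) is easier than you suggest. Composing on the \emph{right} with a Fourier multiplier $b(D)$ gives an operator with symbol exactly $a(x,\xi)b(\xi)$ (no asymptotic series, since $D_x^\alpha b(\xi)=0$ for $\alpha\neq 0$). So all the analytical work sits in step (2), the \emph{left} composition $e^{\rho\langle D\rangle^{1/\theta}}\circ p(x,D)$, where the expansion $\sum_\alpha \tfrac{1}{\alpha!}\partial_\xi^\alpha(e^{\rho\langle\xi\rangle^{1/\theta}})D_x^\alpha p(x,\xi)$ really does require the Gevrey summability argument you describe; after multiplying by $e^{-\rho\langle\xi\rangle^{1/\theta}}$ the exponentials cancel at principal level and the tail is controlled by $\theta>\mu$ (or by the smallness of $|\rho|$ in case ii)). With this streamlining your outline is correct.
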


By \cite[Proposition 6.4]{KN}, given $p \in {\bf S}^{m}_{\mu}(\R^{2n};A)$ and $q \in {\bf S}^{m'}_{\mu}(\R^{2n};A)$, the operator $p(x,D)q(x,D)$ is a pseudodifferential operator with symbol $s$ given for every $N \geq 1$ by
$$s(x,\xi)= \sum_{|\alpha| <N}(\alpha!)^{-1}\partial_\xi^\alpha p(x,\xi) D_x^\alpha q(x,\xi) +r_N(x,\xi),$$
where $r_N \in {\bf S}_\mu^{m+m'-N}(\R^{2n}).$

In the sequel we shall also consider symbols of the type $e^{\Lambda}$, where $\Lambda \in \textbf{S}^{1/\kappa}(\R^2)$ is real-valued and $\kappa > 1$. Operators coming from this type of symbols are very helpful in the analysis of evolution operators in Gevrey type spaces, see \cite{AAC3evolGelfand-Shilov, AAC3evolGevrey, scncpp2, ACJMPA, CRJEECT, KB, KN, Zanghirati}. 
It is easy to verify that $e^{\pm \Lambda}$ satisfies an estimate of the form
\begin{equation}\label{expest}
	|\partial_\xi^\alpha \partial_x^\beta e^{\pm \Lambda(x,\xi)}| \leq  A_1^{|\alpha+\beta|}\langle \xi \rangle^{-|\alpha|}(\alpha!\beta!)^\mu e^{2\rho_0\langle \xi \rangle^{\frac1{\kappa}}}
\end{equation}
for some positive constant $A_1$ independent of $\alpha, \beta$, where 
$$
\rho_0:= \sup_{(\alpha, \beta) \in \N_{0}^{2n}} \sup\limits_{(x,\xi) \in \R^{2n}} A^{-|\alpha+\beta|} (\alpha!\beta!)^{-\mu} \langle \xi \rangle^{-1/\kappa + |\alpha|}|\partial^{\alpha}_{\xi}\partial^{\beta}_{x} \Lambda(x,\xi)|,
$$ 
see \cite[Lemma 6.2]{KN}. The estimate \eqref{expest} guarantees that the related pseudodifferential operator
$$e^{\pm \Lambda}(x,D) u(x) = \int_{\R^n} e^{i\xi x \pm\Lambda(x,\xi)}\hat{u}(\xi)\, \dslash \xi$$ 
is well defined and continuous as an operator from $G_0^\theta(\R^n)$ to $G^\theta(\R^n)$ for every $\theta \in (\mu, \kappa)$, see \cite[Theorem 3.2.3]{Rodino_linear_partial_differential_operators_in_gevrey_spaces} or \cite[Theorem 2.4]{Zanghirati}. 
We shall also consider the so-called reverse operator of $e^{\pm \Lambda}(x,D)$, denoted by $^{R} (e^{\pm \Lambda}(x,D))$. This operator, introduced in \cite[Proposition 2.13]{KW} as the transposed of $e^{\pm \Lambda}(x,-D)$,  is defined as an oscillatory integral by
$$^{R}(e^{\pm \Lambda}(x,D))u(x) = Os - \iint e^{i\xi (x-y) \pm \Lambda(y,\xi)}u(y)\, dy \dslash \xi.
$$
The following continuity result holds for the operators $e^\Lambda(x,D)$ and $^R (e^\Lambda(x,D))$, cf. \cite[Proposition 2.5]{AAC3evolGevrey}.

\begin{proposition}\label{contgev}
	Let $\Lambda \in \tilde{{\bf S}}_\mu^{1/\kappa}(\R^{2n};A)$ for some $A>0$ and $\kappa, \mu \in \R$ such that $1<\mu < \kappa$ and let  $\rho,m\in\R$ and $\theta  \in (\mu,\kappa)$. Then the operators $e^{\Lambda}(x,D)$ and $^R (e^{\Lambda}(x,D))$ map continuously $H^{m}_{\rho; \theta} (\R^n)$ into $H^{m}_{\rho-\delta; \theta}(\R^n)$ for every $\delta >0$.	
\end{proposition}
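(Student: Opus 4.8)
The plan is to factor $e^{\Lambda}(x,D)$ through an exponential Fourier multiplier --- which absorbs exactly the claimed loss $\delta$ in the weight --- and a pseudodifferential operator whose symbol, thanks to $\mu<\theta<\kappa$, actually lies in the \emph{finite order} class $\tilde{{\bf S}}^{0}_{\mu}$, so that Proposition~\ref{prop_continuity_finite_order_gevrey_sobolev} can be invoked. Concretely, I would write
\[
e^{\Lambda}(x,D) \;=\; \bigl(e^{\Lambda}(x,D)\,e^{-\delta\langle D\rangle^{1/\theta}}\bigr)\circ e^{\delta\langle D\rangle^{1/\theta}} .
\]
Directly from the definition of $H^{m}_{\rho;\theta}$, the multiplier $e^{\delta\langle D\rangle^{1/\theta}}$ is an isometric isomorphism of $H^{m}_{\rho;\theta}(\R^{n})$ onto $H^{m}_{\rho-\delta;\theta}(\R^{n})$ (it only replaces $e^{\rho\langle\xi\rangle^{1/\theta}}$ by $e^{(\rho-\delta)\langle\xi\rangle^{1/\theta}}$ in the defining norm). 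Since $e^{-\delta\langle D\rangle^{1/\theta}}$ is a Fourier multiplier, the composition $e^{\Lambda}(x,D)\,e^{-\delta\langle D\rangle^{1/\theta}}$ is \emph{exactly} the pseudodifferential operator with symbol $p(x,\xi):=e^{\Lambda(x,\xi)-\delta\langle\xi\rangle^{1/\theta}}$, so no asymptotic expansion is needed for it; the whole point is reduced to showing that $p\in\tilde{{\bf S}}^{0}_{\mu}(\R^{2n};A')$ for a suitable $A'>0$.

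For this symbol estimate I would combine \eqref{expest}, which gives $|\partial_{\xi}^{\alpha}\partial_{x}^{\beta}e^{\Lambda}|\le A_{1}^{|\alpha+\beta|}\langle\xi\rangle^{-|\alpha|}(\alpha!\beta!)^{\mu}e^{2\rho_{0}\langle\xi\rangle^{1/\kappa}}$, with the standard bound $|\partial_{\xi}^{\alpha}e^{-\delta\langle\xi\rangle^{1/\theta}}|\le A_{2}^{|\alpha|}\,\alpha!\,e^{-\delta\langle\xi\rangle^{1/\theta}}$ (valid because $\langle\xi\rangle^{1/\theta}$ is a classical symbol of order $1/\theta<1$ with analytic-type derivative bounds; see \cite{CappielloRodino, Rodino_linear_partial_differential_operators_in_gevrey_spaces}). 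Leibniz' rule together with the elementary inequality $\binom{\alpha}{\alpha'}(\alpha'!\,\alpha''!)^{\mu}\le(\alpha!)^{\mu}$ (which holds since $\alpha'!\,\alpha''!\le\alpha!$ and $\mu\ge1$) then yields
\[
|\partial_{\xi}^{\alpha}\partial_{x}^{\beta}p(x,\xi)|\;\le\;C\,(A')^{|\alpha+\beta|}\,(\alpha!\beta!)^{\mu}\,e^{\,2\rho_{0}\langle\xi\rangle^{1/\kappa}-\delta\langle\xi\rangle^{1/\theta}} ,
\]
and since $1/\kappa<1/\theta$ the exponential $e^{2\rho_{0}\langle\xi\rangle^{1/\kappa}-\delta\langle\xi\rangle^{1/\theta}}$ is bounded on $\R^{n}$; hence $p\in\tilde{{\bf S}}^{0}_{\mu}(\R^{2n};A')$.

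With this in hand, since $\theta>\mu$, Proposition~\ref{prop_continuity_finite_order_gevrey_sobolev}(i) shows that $p(x,D)=e^{\Lambda}(x,D)\,e^{-\delta\langle D\rangle^{1/\theta}}$ maps $H^{m}_{\rho-\delta;\theta}(\R^{n})$ continuously into itself (the symbol order being $0$); composing with the isometry above gives the continuity of $e^{\Lambda}(x,D)\colon H^{m}_{\rho;\theta}(\R^{n})\to H^{m}_{\rho-\delta;\theta}(\R^{n})$. For $^{R}(e^{\Lambda}(x,D))$ I would then argue by duality: it is, by definition, the transpose of $e^{\Lambda}(x,-D)$, whose symbol $e^{\Lambda(x,-\xi)}$ still belongs to $\tilde{{\bf S}}^{1/\kappa}_{\mu}(\R^{2n};A)$ (because $\langle-\xi\rangle=\langle\xi\rangle$); by the part just proved $e^{\Lambda}(x,-D)$ maps $H^{-m}_{\delta-\rho;\theta}(\R^{n})$ continuously into $H^{-m}_{-\rho;\theta}(\R^{n})$, and, using $(H^{m}_{\rho;\theta}(\R^{n}))'=H^{-m}_{-\rho;\theta}(\R^{n})$ with respect to the $L^{2}$ duality pairing, its transpose maps $H^{m}_{\rho;\theta}(\R^{n})$ continuously into $H^{m}_{\rho-\delta;\theta}(\R^{n})$, as required. (Alternatively, the reverse operator can be handled directly on its oscillatory-integral representation, as in \cite{KW, AACKdV}.)

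The one genuinely delicate step is the symbol estimate $p\in\tilde{{\bf S}}^{0}_{\mu}$, and it is there that all three standing inequalities are really used: $\theta<\kappa$ (equivalently $1/\kappa<1/\theta$) to absorb the infinite-order growth $e^{2\rho_{0}\langle\xi\rangle^{1/\kappa}}$ of the symbol of $e^{\Lambda}(x,D)$ into the decay $e^{-\delta\langle\xi\rangle^{1/\theta}}$; $\mu\ge1$ to subsume the analytic-type ($\alpha!$) bounds of $e^{-\delta\langle\xi\rangle^{1/\theta}}$ into the Gevrey-$\mu$ scale; and $\mu<\theta$ to make Proposition~\ref{prop_continuity_finite_order_gevrey_sobolev}(i) applicable on the Gevrey--Sobolev spaces of index $\theta$. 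Everything else is routine bookkeeping.
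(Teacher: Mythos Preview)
Your argument is correct. The paper does not give its own proof of this proposition; it simply cites \cite[Proposition~2.8]{AACKdV}, so a line-by-line comparison is impossible, but your reduction---factor off the Fourier multiplier $e^{\delta\langle D\rangle^{1/\theta}}$, check that the remaining symbol $e^{\Lambda(x,\xi)-\delta\langle\xi\rangle^{1/\theta}}$ lands in $\tilde{{\bf S}}^{0}_{\mu}$ because $1/\kappa<1/\theta$, and then invoke Proposition~\ref{prop_continuity_finite_order_gevrey_sobolev}(i)---is exactly the standard route for results of this type and is almost certainly what the cited reference does. One cosmetic remark: estimate \eqref{expest} in the paper is stated for $\Lambda\in{\bf S}^{1/\kappa}_{\mu}$ (with the gain $\langle\xi\rangle^{-|\alpha|}$), whereas the hypothesis here is $\Lambda\in\tilde{{\bf S}}^{1/\kappa}_{\mu}$; the analogue of \eqref{expest} without the $\langle\xi\rangle^{-|\alpha|}$ factor still holds by the same Fa\`a di Bruno computation, and that is all you use, so nothing is lost.
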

%
%
%
%
%

In the next result we shall need to work with the weight function $\langle\xi\rangle_h= (h^2+|\xi|^2)^{1/2}$
where $h \geq 1$. We point out that we can replace $\langle \xi \rangle$ by $\langle \xi \rangle_{h}$ in all previous definitions and statements, and this replacement does not change the dependence of the constants, that is, all the previous constants are independent of $h$. 

We have the following conjugation theorem which has been proved in \cite[Theorem 2.8]{AAC3evolGevrey}.
\begin{theorem}\label{theorem_conjugation}
	Let $p \in {\bf S}^m_{\kappa}(\R^{2n};A)$ for some $A>0$
	and let $\Lambda$ satisfy for some $1 < \mu < \kappa$:
	\begin{equation}\label{firstasslambda}
		|\partial_\xi^\alpha \Lambda(x,\xi)| \leq \rho_0 A^{|\alpha|} \alpha!^{\mu} \langle \xi \rangle^{\frac{1}{\kappa}-|\alpha|}_{h}
	\end{equation}
	and 
	\begin{equation}\label{equation_stronger_hypothesis_on_Lambda}
		|\partial_\xi^\alpha \partial_x^\beta \Lambda(x,\xi)| \leq \rho_0 A^{|\alpha+\beta|}\alpha!^{\mu}\beta!^{\mu} \langle \xi \rangle^{-|\alpha|}_{h}, 
	\end{equation}
	whenever $|\beta| \geq 1$. 
	Then there exist $\tilde{\delta} > 0$ and $h_0 = h_0(A) \geq 1$ such that if $\rho_0 \leq \tilde{\delta} A^{-\frac{1}{\kappa}}$ and $h \geq h_0$, then 
	\begin{multline}\label{asymptotic_expansion}
		e^\Lambda(x,D) p(x,D) ^{R}(e^{-\Lambda}(x,D)) = p(x,D)+ \mathbf{op} \left( \sum_{1 \leq |\alpha+\beta| < N} \frac{1}{\alpha!\beta!} \partial^{\alpha}_{\xi} \{\partial^{\beta}_{\xi} e^{\Lambda(x,\xi)} D^{\beta}_{x}p(x,\xi) D^{\alpha}_{x}e^{-\Lambda(x,\xi)} \} \right)  \\ 
		+ r_{N}(x,D) + r_{\infty}(x,D) ,
	\end{multline}
	where 
	\begin{equation*}
		|\partial^{\alpha}_{\xi}\partial^{\beta}_{x}r_N(x,\xi)| \leq C_{\rho_0,A,\kappa} (C_{\kappa}A)^{|\alpha+\beta|+2N}\alpha!^{\kappa}\beta!^{\kappa}N!^{2\kappa-1} \langle \xi \rangle_h^{m-(1-\frac{1}{\kappa})N - |\alpha|},
	\end{equation*}
	\begin{equation} \label{regularizingestimate}
		|\partial^{\alpha}_{\xi}\partial^{\beta}_{x}r_{\infty}(x,\xi)| \leq C_{\rho_0,A,\kappa} (C_{\kappa}A)^{|\alpha+\beta|+2N}\alpha!^{\kappa}\beta!^{\kappa}N!^{2\kappa-1} e^{-c_\kappa A^{-\frac{1}{\kappa}} \langle \xi \rangle_h^{\frac{1}{\kappa}}}.
	\end{equation}
	\end{theorem}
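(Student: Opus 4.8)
The plan is to carry out the triple composition directly at the level of oscillatory integrals, keeping \emph{exact} remainders throughout, in the spirit of the symbolic calculus of \cite{KN, Zanghirati, CappielloRodino}; the genuinely delicate points are the bookkeeping of the two Gevrey orders $\mu<\kappa$ and the uniformity of all constants in $h$. First I would reduce to an amplitude operator: writing $e^{\Lambda}(x,D)$, $p(x,D)$ and ${}^{R}(e^{-\Lambda}(x,D))$ as oscillatory integrals and multiplying their Schwartz kernels, one finds that $p(x,D)\,{}^{R}(e^{-\Lambda}(x,D))$ is the amplitude operator with amplitude $a(x,y,\xi)=p(x,\xi)e^{-\Lambda(y,\xi)}$, and that composing it on the left with $e^{\Lambda}(x,D)$ gives the amplitude operator with phase $(x-z)\eta+(z-y)\xi$ and amplitude $e^{\Lambda(x,\eta)}p(z,\xi)e^{-\Lambda(y,\xi)}$. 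The estimate \eqref{expest}, which holds with the same constants if $\japxi$ is replaced by $\japxih$, ensures that all the integrals involved make sense as oscillatory integrals for $\theta\in(\mu,\kappa)$ once $\rho_0$ is small and $h$ is large, and justifies the manipulations below.

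Next I would extract the asymptotic expansion. In the inner integral I would set $z=x+w$, Taylor-expand $e^{\Lambda(x,\eta)}$ in $\eta$ at $\eta=\xi$ and $p(x+w,\xi)$ in $w$ at $w=0$, and at the same time pass from the amplitude $q(x,\xi)e^{-\Lambda(y,\xi)}$ — where $q$ is the symbol of $e^{\Lambda}(x,D)p(x,D)$ — to a genuine left symbol through the amplitude-to-symbol formula $\sigma(x,\xi)\sim\sum_{\alpha}\frac{1}{\alpha!}\partial_\xi^{\alpha}D_{y}^{\alpha}\big[q(x,\xi)e^{-\Lambda(y,\xi)}\big]\big|_{y=x}$. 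Turning the monomials $\eta^{\beta}$ into $z$-derivatives by $\eta^{\beta}e^{-iz\eta}=(-D_{z})^{\beta}e^{-iz\eta}$ and integrating by parts, and collapsing the $(z,\eta)$-integrals by Fourier inversion, the contributions of total order $<N$ assemble exactly into $p(x,D)$ together with $\mathbf{op}\big(\sum_{1\le|\alpha+\beta|<N}\frac{1}{\alpha!\beta!}\partial_\xi^{\alpha}\{\partial_\xi^{\beta}e^{\Lambda}D_x^{\beta}p\,D_x^{\alpha}e^{-\Lambda}\}\big)$, while the Taylor remainders form $r_{N}(x,D)$. To estimate $\doublepartial r_{N}$ I would use the ${\bf S}^{m}_{\kappa}$-bounds of $p$, the hypotheses \eqref{firstasslambda}, \eqref{equation_stronger_hypothesis_on_Lambda} and \eqref{expest} for $e^{\pm\Lambda}$, and the fact that the order-$N$ truncation gains $\japxih^{-1}$ for each $\eta$-variable against at most $\japxih^{1/\kappa}$ for each $w$-variable, hence the net gain $\japxih^{-(1-1/\kappa)N}$; the worst regularity being that of $p$, one is left with the losses $\alpha!^{\kappa}\beta!^{\kappa}N!^{2\kappa-1}$, and since $r_{N}$ is only polynomially decaying in $\xi$ no exponential factor enters.

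Finally I would isolate the regularizing term $r_{\infty}$. The $\eta$-integral defining $q$ runs over all of $\R^{n}$ whereas $e^{\Lambda(x,\eta)}$ grows like $e^{+c\japxih^{1/\kappa}}$, so I would split off a cut-off $\chi$ supported in $|\eta-\xi|\le\varepsilon\japxih$, where the argument above applies verbatim, and on the complementary region — where $|\xi-\eta|$ is comparable both to $\japxih$ and to $|\eta|$ — integrate by parts arbitrarily many times in $w$ (using $\langle\xi-\eta\rangle^{-2}(1-\Delta_{w})e^{iw(\xi-\eta)}=e^{iw(\xi-\eta)}$) and, if necessary, in $\eta$. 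Since $k$ integrations by parts cost a factor growing like $(CA)^{k}k!^{\kappa}$ against a gain $\langle\xi-\eta\rangle^{-k}$, integrating afterwards against $e^{c\langle\eta\rangle^{1/\kappa}}$ and optimizing over $k\sim A^{-1/\kappa}\japxih^{1/\kappa}$ — this is precisely where $\mu<\kappa$ and the smallness $\rho_{0}\le\tilde{\delta}A^{-1/\kappa}$ enter, to keep the exponent of the resulting exponential negative — one obtains a bound of the form \eqref{regularizingestimate}, uniform for $h\ge h_{0}(A)$ once $h_{0}$ is chosen large enough to absorb the finitely many constants produced by the cut-off and the calculus.

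The hard part will be the last two steps jointly: one cannot merely iterate the two asymptotic expansions (the left composition with $e^{\Lambda}$ and the amplitude-to-symbol reduction encoding the $y$-dependence of $e^{-\Lambda}$) but must propagate \emph{exact} integral remainders through both with one and the same truncation order $N$, and it is this that dictates the precise loss $N!^{2\kappa-1}$; moreover the smallness $\rho_{0}\le\tilde{\delta}A^{-1/\kappa}$ has to be matched quantitatively against the admissible number of integrations by parts so that \eqref{regularizingestimate} genuinely decays rather than grows. Keeping every constant dependent on $\japxih$, and never on $h$ alone, is a further pervasive source of care throughout.
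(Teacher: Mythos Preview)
The paper does not actually prove this theorem: it is stated with the remark ``which has been proved in \cite[Theorem 2.8]{AAC3evolGevrey}'' and then simply used as a tool. So there is no proof in the paper to compare your attempt against.

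That said, your outline is a reasonable sketch of how such conjugation theorems are proved in the Gevrey setting (and is broadly in the spirit of the cited references \cite{KN, Zanghirati, AAC3evolGevrey}): reduce the triple product to an amplitude operator, Taylor-expand in the fibre variable to produce the finite sum in \eqref{asymptotic_expansion} plus an exact integral remainder $r_N$, and isolate the far-from-diagonal contribution via a cut-off $|\eta-\xi|\gtrsim\japxih$ followed by repeated integration by parts and optimization over the number of iterations to obtain the exponential decay in \eqref{regularizingestimate}. You correctly flag the two genuinely delicate points --- propagating \emph{exact} remainders (rather than iterating asymptotic expansions) with a single truncation level $N$, which is what forces the loss $N!^{2\kappa-1}$, and the quantitative matching of the smallness $\rho_0\le\tilde\delta A^{-1/\kappa}$ against the optimization step so that the resulting exponent stays negative. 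One small caution: in your second paragraph you write that each $w$-variable costs at most $\japxih^{1/\kappa}$; under hypothesis \eqref{equation_stronger_hypothesis_on_Lambda} an $x$-derivative of $\Lambda$ actually gains order (the symbol drops to order $0$), so the factorial loss $N!^{2\kappa-1}$ comes rather from combining the two expansions and from the $\kappa$-regularity of $p$, not from a genuine $\japxih^{1/\kappa}$ growth in $w$. This does not break your argument, but the accounting should be adjusted if you write out the details.
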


\section{Construction of a change of variable}\label{sec:changeofvariable}

As mentioned in the Introduction, in order to prove Theorem \ref{maintheorem1}, we are going to perform
a change of variable which turns the Cauchy problem \eqref{cauchy_problem_gevrey} into an equivalent Cauchy problem which is well-posed in Sobolev spaces. The change of variable will have the form $$v:=Qu,$$ where $Q$ is a suitable invertible pseudodifferential operator of infinite order. By the equivalence $u=Q^{-1}v$ we will recover the solution $u$ to the original Cauchy problem from the solution $v$ to the auxiliary problem. The infinite order of the operator $Q^{-1}$ will determine the space where the solution $u$ belongs to.
In what follows we are going to explain the structure of the operator $Q$: it will be given by the composition of two operators, and we describe for each term of the composition its role in the transformation.

 The operator $Q$ will have the following structure:
\begin{equation}\label{operator_of_conjugation_gevrey}
	Q(t,x,D) = Q_{\Lambda,K,\rho^\prime}(t,x,D) := e^{\Lambda_{K,\rho^\prime}}(t,D) \circ e^\Lambda(x,D),
\end{equation}
 where
$$
\Lambda(x,\xi) = \sum_{k=1}^{p-1}\lambda_{p-k}(x,\xi) \in \mathbf{SG}_\mu^{0,1-\sigma}(\R^2) \cap \mathbf{S}_\mu^{(p-1)(1-\sigma)}(\R^2)
$$
for some $\mu>1$, and 
$$
\Lambda_{K,\rho^\prime}(t,\xi) = K(T-t)\japxih^{(p-1)(1-\sigma)} + \rho^\prime\japxih^{1/\theta}
$$
with $0<\rho^\prime<\rho$, with $\rho$ coming from the Cauchy data, $K>0$ and $h >>1$ large to be both chosen later on.
\\

Now, let us describe each part of the operator $Q_{\Lambda,K,\rho^\prime}(t,x,D)$.

\begin{itemize}
	\item The role of each factor $e^{\lambda_{p-j}}$ of the symbol $e^\Lambda=e^{\lambda_{p-1}}\cdot \ldots \cdot e^{\lambda_{p-1}}$ in the conjugation with $e^\Lambda(x,D)$ is to turn $\mathbf{Im} \, a_{p-j}(t,x,D), j=1,\ldots,p,$ into the sum of a positive operator plus a term of lower order, without changing the parts of order $p,p-1,...,p-j+1$ of the operator. Summing up, the conjugation with $e^\Lambda(x,D)$ turns the operator $P(t,x,D_t,D_x)$ into a sum of positive operators plus a remainder of order $(p-1)(1-\sigma)$.
	
	\item The operator $e^{K(T-t)\langle D \rangle_h^{(p-1)(1-\sigma)}}$ does not change terms of order $1, ..., p$, but it corrects the error of order $(p-1)(1-\sigma)$ coming from the previous transformation by changing it into the sum of a positive operator plus a remainder of order zero. This is obtained by choosing $K$ sufficiently large.
	
	\item The term $e^{\rho^\prime \langle D \rangle_h^{1/\theta}}$ is the leading term of the whole transformation $Q_{\Lambda,K,\rho^\prime}(t,x,D)$, since we are assuming $(p-1)(1-\sigma)<1/\theta$: it changes the setting of the Cauchy problem from Gevrey-Sobolev type spaces to the standard Sobolev spaces. We remark that, since $\rho'>0$, $e^{\rho^\prime \japxih^{1/\theta}}$ is the leading part of $Q_{\Lambda,K,\rho^\prime}(t,x,\xi)$, so the inverse operator $(Q_{\Lambda,K,\rho^\prime}(t,x,D))^{-1}$ has regularizing properties with respect to the spaces $H_{\rho;\theta}^m(\R)$, i.e. it maps $H^m(\R)$ into a Gevrey-Sobolev space $H^m_{\rho'-\delta;\theta}(\R)$ for every positive $\delta$.
\end{itemize}
In the following subsections we are going to give more details about this change of variable. Setting
$$
P_{\Lambda,K,\rho^\prime}(t,x,D_t,D_x) := Q_{\Lambda,K,\rho^\prime}(t,x,D_x) \circ P(t,x,D_t,D_x) \circ (Q_{\Lambda,K,\rho^\prime}(t,x,D_x))^{-1},
$$
we note that the Cauchy problem \eqref{cauchy_problem_gevrey} is equivalent to the auxiliary Cauchy problem
\begin{equation}\label{auxiliary_cauchy_problem_gevrey}
	\left\lbrace\begin{array}{ll}
		P_{\Lambda,K,\rho^\prime}(t,x,D_t,D_x)v(t,x)=Q_{\Lambda,K,\rho^\prime}(t,x,D_x)f(t,x), & (t,x)\in[0,T]\times\R \\ 
		v(0,x)=Q_{\Lambda,K,\rho^\prime}(0,x,D_x)g(x), & x\in\R
	\end{array}, \right.
\end{equation}
in the sense that if $u$ solves \eqref{cauchy_problem_gevrey}, then $v=Q_{\Lambda,K,\rho^\prime}(t,x,D_x)u$ solves \eqref{auxiliary_cauchy_problem_gevrey} and, if $v$ solves \eqref{auxiliary_cauchy_problem_gevrey}, then $u=(Q_{\Lambda,K,\rho^\prime}(t,x,D_x))^{-1}v$ solves \eqref{cauchy_problem_gevrey}.

\subsection{The functions $\lambda_{p-k}(x,\xi)$, $k=1,...,p-1$}

For each $k=1,...,p-1$, let $M_{p-k}>0$ to be chosen later on and define
\begin{equation}\label{function_lambda_(p-k)}
	\lambda_{p-k}(x,\xi):=M_{p-k}\omega\left(\frac{\xi}{h}\right)\japxih^{1-k}\int_0^x\langle y\rangle^{-\frac{p-k}{p-1}\sigma}\psi\left(\frac{\langle y\rangle}{\japxih^{p-1}}\right)dy,
\end{equation}
where $\omega$ and $\psi$ are $C^\infty$ functions such that
\begin{equation}\label{z}
\omega(\xi) = \left\lbrace \begin{array}{ll}
	0, & |\xi|\leq1 \\ 
	-\text{sgn}(a_p(t)), & |\xi|>R_{a_p}
\end{array}  \right., \qquad \psi(y) = \left\lbrace \begin{array}{ll}
	1, & |y|\leq\frac{1}{2} \\ 
	0, & |y|\geq1
\end{array}  \right.
\end{equation}
and 
$$
|\partialxi^\alpha\omega(\xi)|\leq C_\omega^{\alpha+1}\alpha!^\mu\qquad\text{and}\qquad |\partial_y^\beta\psi(y)|\leq C_\psi^{\beta+1}\beta!^\mu.
$$
for some fixed $\mu>1$ (that we can take very close to $1$).
Notice that, by assumption (i) in Theorem \ref{maintheorem1}, $\omega$ is constant for $|\xi|\geq R_{a_p}$, hence, if $\alpha\ne0$ we have that $\omega^{(\alpha)}\left(\frac{\xi}{h}\right)$ is supported in $|\xi|/h \leq R_{a_p}$, which gives us
$$
h^{-\alpha}\leq \japxih^{-\alpha}\langle R_{a_p}\rangle^\alpha.
$$


The following result collects several alternative estimates satisfied by the function $\lambda_{p-k}$. In the proof of our results we shall use at each step the most convenient of them.

\begin{lemma}\label{estimatesforlambda}
	For each $k=1,...,p-1$, the following statements hold:
	\begin{itemize}
		\item[\textup{(i)}] $|\lambda_{p-k}(x,\xi)|\leq\frac{M_{p-k}}{1-\frac{p-k}{p-1}\sigma}\japxih^{(p-k)(1-\sigma)}.$
		\item[\textup{(ii)}] $|\partialxi^\alpha\lambda_{p-k}(x,\xi)|\leq C^{\alpha+1}\alpha!^\mu\japxih^{(p-k)(1-\sigma)-\alpha}$, for all $\alpha\geq1$.
		\item[\textup{(iii)}] $|\partialxi^\alpha\lambda_{p-k}(x,\xi)|\leq C^{\alpha+1}\alpha!^\mu\japxih^{1-k-\alpha}\japx^{1-\frac{p-k}{p-1}\sigma}$, for all $\alpha\geq0$.
		\item[\textup{(iv)}] $|\partialxi^\alpha\lambda_{p-k}(x,\xi)|\leq C^{\alpha+1}\alpha!^\mu\japxih^{-\alpha}\japx^{\frac{p-k}{p-1}(1-\sigma)}$, for all $\alpha\geq0$.
		\item[\textup{(v)}] $|\doublepartial\lambda_{p-k}(x,\xi)|\leq C^{\alpha+\beta+1}(\alpha!\beta!)^\mu\japxih^{1-k-\alpha}\japx^{-\frac{p-k}{p-1}\sigma-(\beta-1)}$, for all $\alpha\geq0$ and $\beta\geq1$.
	\end{itemize}
\end{lemma}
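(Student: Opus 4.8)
The plan is to verify each of the five estimates by direct differentiation of the defining integral formula \eqref{function_lambda_(p-k)}, carefully tracking how the cutoffs $\omega(\xi/h)$ and $\psi(\langle y\rangle/\japxih^{p-1})$ interact with the weights. The recurring mechanism is this: on the support of the $y$-integrand one has $\langle y\rangle \lesssim \japxih^{p-1}$ (from $\psi$), so the integrand is bounded by $\langle y\rangle^{-\frac{p-k}{p-1}\sigma} \lesssim \langle y\rangle^{-\frac{p-k}{p-1}\sigma}$ and integrating in $y$ over a range of length $O(\japxih^{p-1})$ produces at most $\japxih^{(p-1)(1-\frac{p-k}{p-1}\sigma)} = \japxih^{(p-k)(1-\sigma)+ (k-1)}$ wait — one must be careful: integrating $\langle y\rangle^{-\frac{p-k}{p-1}\sigma}$ from $0$ to $X$ gives $O(X^{1-\frac{p-k}{p-1}\sigma})$ when the exponent exceeds $-1$, i.e. $O(\japxih^{(p-1)(1-\frac{p-k}{p-1}\sigma)}) = O(\japxih^{(p-1)-(p-k)\sigma})$. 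Multiplying by the prefactor $M_{p-k}\japxih^{1-k}$ and using $\omega = O(1)$ yields $\japxih^{(p-k)-(p-k)\sigma} = \japxih^{(p-k)(1-\sigma)}$, which is (i). The explicit constant $\frac{M_{p-k}}{1-\frac{p-k}{p-1}\sigma}$ comes from the elementary bound $\int_0^x \langle y\rangle^{-s}\,dy \le \frac{1}{1-s}\japx^{1-s}$ for $s\in(0,1)$, valid since $\sigma<1$ and $\frac{p-k}{p-1}\le 1$ forces $\frac{p-k}{p-1}\sigma<1$.

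For (ii)--(v), I would apply $\partial_\xi^\alpha$ (and in (v) also $\partial_x^\beta$) to \eqref{function_lambda_(p-k)} via the Leibniz rule, distributing derivatives among the three $\xi$-dependent factors: $\omega(\xi/h)$, $\japxih^{1-k}$, and the integral $I(x,\xi):=\int_0^x \langle y\rangle^{-\frac{p-k}{p-1}\sigma}\psi(\langle y\rangle/\japxih^{p-1})\,dy$. Differentiating $\omega(\xi/h)$ once or more restricts to $|\xi|\le R_{a_p}h$, where the inequality $h^{-\alpha}\le \japxih^{-\alpha}\langle R_{a_p}\rangle^\alpha$ noted in the text converts the unfavorable $h^{-\alpha}$ into the desired $\japxih^{-\alpha}$ at the cost of a harmless constant; the factor $\japxih^{1-k}$ loses one power of $\japxih$ per derivative in the standard way with Gevrey-admissible ($\mu$-type) constants; and for $\partial_\xi^\alpha I$ one differentiates under the integral sign — each $\xi$-derivative hits $\psi(\langle y\rangle/\japxih^{p-1})$, producing $\psi^{(r)}$ times $\partial_\xi$ of $\langle y\rangle\japxih^{-(p-1)}$, which is $O(\japxih^{-1})$ and is supported where $\langle y\rangle \sim \japxih^{p-1}$, so that $\langle y\rangle^{-\frac{p-k}{p-1}\sigma}\sim \japxih^{-(p-k)\sigma}$ there and the $y$-integration over an interval of length $O(\japxih^{p-1})$ costs another $\japxih^{p-1}$. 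Collecting: $|\partial_\xi^\alpha I|\lesssim \japxih^{(p-1)-(p-k)\sigma - \alpha}$ for $\alpha\ge 1$, and $|I|\lesssim \min\{\japxih^{(p-1)-(p-k)\sigma},\ \japx^{(p-1)-(p-k)\sigma},\ \japx^{1-\frac{p-k}{p-1}\sigma}\}$ depending on whether one estimates via the $\psi$-support or crudely via $\sigma<1$. Combining these three factor-estimates via Leibniz and choosing in each bound the most economical split of derivatives — all derivatives on $\japxih^{1-k}\cdot\omega$ versus sharing with $I$ — produces (ii) (using $I=O(\japxih^{(p-1)-(p-k)\sigma})$, i.e. the $\psi$-support bound, combined with the $\japxih^{1-k}$ prefactor, since $1-k+(p-1)-(p-k)\sigma = (p-k)(1-\sigma)$), (iii) (keeping $I$ bounded by $\japx^{1-\frac{p-k}{p-1}\sigma}$, the elementary integral bound, and all $\xi$-derivatives on $\omega\cdot\japxih^{1-k}$), and (iv) (using the intermediate bound $I=O(\japx^{\frac{p-k}{p-1}(1-\sigma)}\cdot\text{something})$ — concretely $\langle y\rangle^{-\frac{p-k}{p-1}\sigma}$ integrated and then re-expressed using $\langle y\rangle\lesssim\japx$ on part of the range, cf. the identity $1-\frac{p-k}{p-1}\sigma$ vs $\frac{p-k}{p-1}(1-\sigma)$ after absorbing one power; here I'd split $\langle y\rangle^{-\frac{p-k}{p-1}\sigma}=\langle y\rangle^{-\frac{p-k}{p-1}}\langle y\rangle^{\frac{p-k}{p-1}(1-\sigma)}\lesssim \langle y\rangle^{-\frac{p-k}{p-1}}\japx^{\frac{p-k}{p-1}(1-\sigma)}$ using $\langle y\rangle\le\japx$, wait this needs $p-k\ge p-1$... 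I would instead bound $\langle y\rangle^{\frac{p-k}{p-1}(1-\sigma)}\le \japxih^{\frac{p-k}{p-1}(1-\sigma)(p-1)}=\japxih^{(p-k)(1-\sigma)}$ on the $\psi$-support, giving (iv) after the $\japxih^{1-k}$ prefactor is not even needed — one must reconcile $\japxih$ vs $\japx$ but since $h\ge 1$ and we only need an upper bound with $\japx$, and $\japxih\ge\japx$ fails... so (iv) should use $\japxih^{-\alpha}$ for the derivatives and $\japx^{\frac{p-k}{p-1}(1-\sigma)}$ extracted from the $y$-integral over the region $\langle y\rangle\lesssim\japx$, with the complementary region $\japx\lesssim\langle y\rangle\lesssim\japxih^{p-1}$ handled by the decay $\langle y\rangle^{-\frac{p-k}{p-1}\sigma}$; I will sort the exact routing when writing the details).

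For (v), the $x$-derivatives are the new ingredient: $\partial_x^\beta$ with $\beta\ge 1$ hits the integral $I$, and since $\partial_x I(x,\xi) = \langle x\rangle^{-\frac{p-k}{p-1}\sigma}\psi(\langle x\rangle/\japxih^{p-1})$ by the fundamental theorem of calculus, the remaining $\beta-1$ $x$-derivatives fall on $\langle x\rangle^{-\frac{p-k}{p-1}\sigma}$ and on $\psi(\langle x\rangle/\japxih^{p-1})$. Each derivative of the first factor costs $\langle x\rangle^{-1}$; each derivative of $\psi$ contributes $\psi^{(\cdot)}$ times $\partial_x(\langle x\rangle\japxih^{-(p-1)})=O(\japxih^{-(p-1)})$ on the support $\langle x\rangle\sim\japxih^{p-1}$, where $\langle x\rangle^{-1}\sim\japxih^{-(p-1)}$, so this is no worse than $\langle x\rangle^{-1}$ per derivative. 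Combined with $|\partial_\xi^\alpha|$ on the surviving $\omega\cdot\japxih^{1-k}$ factors producing $\japxih^{1-k-\alpha}$, this gives exactly $\japxih^{1-k-\alpha}\japx^{-\frac{p-k}{p-1}\sigma-(\beta-1)}$, which is (v). Throughout, the Gevrey constants $(\alpha!\beta!)^\mu$ are generated by the Leibniz binomial coefficients together with the $\mu$-Gevrey bounds assumed on $\omega$ and $\psi$ and the standard fact that $\partial_\xi^\alpha\japxih^{s}$ obeys $|\partial_\xi^\alpha\japxih^s|\le C^{\alpha+1}\alpha!\,\japxih^{s-\alpha}$, so the products stay in the right symbol class.

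\textbf{Main obstacle.} The estimates themselves are elementary, so the real work — and the place where care is needed — is the bookkeeping: for each of (ii)--(v) one must choose the \emph{right} distribution of the $\partial_\xi$ derivatives among the factors $\omega(\xi/h)$, $\japxih^{1-k}$, $I(x,\xi)$ so that the powers of $\japxih$ and $\japx$ collapse to exactly the claimed exponents, and one must invoke the correct one of the three competing bounds for $I$ (the crude $\sigma<1$ integral bound, the $\psi$-support bound, and the mixed bound) in each case. Keeping the $\japxih$ versus $\japx$ distinction straight — since the $\psi$-cutoff lives at the scale $\langle y\rangle\sim\japxih^{p-1}$ while some target estimates are stated in terms of $\japx$ — is the only genuinely delicate point, and it is exactly why all five variants are recorded separately in the lemma rather than derived from one master estimate.
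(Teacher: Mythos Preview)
Your plan is essentially the paper's own approach: Leibniz to distribute $\partial_\xi^\alpha$ among $\omega(\xi/h)$, $\japxih^{1-k}$, and the integral $I(x,\xi)$, Fa\`a di Bruno for the $\psi$-derivatives, and the fundamental theorem of calculus for the $x$-derivatives in (v). Items (i), (ii), (iii), (v) go through exactly as you describe.

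The one genuine gap is (iv). Your attempted splittings don't work (as you notice yourself), and the final suggestion of a ``complementary region $\japx\lesssim\langle y\rangle\lesssim\japxih^{p-1}$'' is vacuous since the integration variable $y$ runs over $[0,|x|]$ and hence always satisfies $\langle y\rangle\le\japx$. The paper's route is much cleaner: it derives (iv) \emph{directly from} (iii) via a single algebraic inequality. The point is that the integral $I(x,\xi)$ becomes constant in $x$ once $\japx\gtrsim\japxih^{p-1}$ (because $\psi$ vanishes there), so effectively one may assume $\japx\le\japxih^{p-1}$; on that region $\japxih^{1-k}\le\japx^{\frac{1-k}{p-1}}$ (both exponents nonpositive), and then
\[
\japxih^{1-k-\alpha}\japx^{1-\frac{p-k}{p-1}\sigma}\le\japxih^{-\alpha}\japx^{\frac{1-k}{p-1}+1-\frac{p-k}{p-1}\sigma}=\japxih^{-\alpha}\japx^{\frac{p-k}{p-1}(1-\sigma)},
\]
which converts the right-hand side of (iii) into that of (iv). For the complementary region $\japx>\japxih^{p-1}$ one uses instead the bound from (i), namely $|\lambda_{p-k}|\lesssim\japxih^{(p-k)(1-\sigma)}\le\japx^{\frac{p-k}{p-1}(1-\sigma)}$, and similarly for the $\xi$-derivatives.

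One minor imprecision in your plan for (iii): you write ``all $\xi$-derivatives on $\omega\cdot\japxih^{1-k}$'', but of course the Leibniz expansion also contains terms with $\partial_\xi^{\alpha_3}I$ for $\alpha_3\ge 1$. Those terms are fine too, because $\partial_\xi^{\alpha_3}I$ is supported where some $y\in[0,|x|]$ satisfies $\langle y\rangle\ge\tfrac12\japxih^{p-1}$, forcing $\japx\gtrsim\japxih^{p-1}$; on that support the $\japxih$-bound you already have for $\partial_\xi^{\alpha_3}I$ is dominated by the claimed $\japx$-bound. Just make sure to say this when you write out the details.
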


\begin{proof}
	Let us denote by $\chi_\xi(x)$ the characteristic function of the set $\{x\in\R^n;\japx\leq\japxih^{p-1}\}$. Then, it follows that
	\begin{eqnarray}
		|\lambda_{p-k}(x,\xi)| & = & M_{p-k}\left|\omega\left(\frac{\xi}{h}\right)\right|\japxih^{-k+1}\left|\int_0^x \langle y\rangle^{-\frac{p-k}{p-1}\sigma}\psi\left(\frac{\langle y\rangle}{\japxih^{p-1}}\right)dy\right| \nonumber \\
		& \leq & M_{p-k}\japxih^{-k+1}\int_0^{|x|}\langle y\rangle^{-\frac{p-k}{p-1}\sigma}\chi_\xi(y)dy \nonumber \\
		& \leq & M_{p-k}\japxih^{-k+1}\int_0^{\min\{|x|,\japxih^{p-1}\}}\langle y\rangle^{-\frac{p-k}{p-1}\sigma}dy \nonumber \\
		& \leq & M_{p-k}\japxih^{-k+1}\int_0^{\min\{\japx,\japxih^{p-1}\}}y^{-\frac{p-k}{p-1}\sigma}dy \nonumber \\
		& = & \frac{M_{p-k}}{1-\frac{p-k}{p-1}\sigma}\japxih^{-k+1}\left[y^{1-\frac{p-k}{p-1}\sigma}\right]_0^{\min\{\japx,\japxih^{p-1}\}} \nonumber \\
		& = & \frac{M_{p-k}}{1-\frac{p-k}{p-1}\sigma}\japxih^{-k+1}(\min\{\japx,\japxih^{p-1}\})^{1-\frac{p-k}{p-1}\sigma} \nonumber \\
		& \leq & \frac{M_{p-k}}{1-\frac{p-k}{p-1}\sigma}\japxih^{-k+1}(\japxih^{p-1})^{1-\frac{p-k}{p-1}\sigma} \nonumber \\
		& = & \frac{M_{p-k}}{1-\frac{p-k}{p-1}\sigma}\japxih^{-k+1}\japxih^{p-1-(p-k)\sigma} \nonumber \\
		& = & \frac{M_{p-k}}{1-\frac{p-k}{p-1}\sigma}\japxih^{(p-k)(1-\sigma)} \nonumber
	\end{eqnarray}
	which gives us (i). From the previous computation we also obtain
	$$
	|\lambda_{p-k}(x,\xi)|\leq\frac{M_{p-k}}{1-\frac{p-k}{p-1}\sigma}\japxih^{1-k}\japx^{1-\frac{p-k}{p-1}\sigma}.
	$$

	Now, we want to obtain the estimate (ii) for $|\partialxi^\alpha\lambda_{p-k}(x,\xi)|$. Note that, by Leibniz rule:
	\begin{eqnarray}\label{6}
		\partialxi^\alpha\lambda_{p-k}(x,\xi) & = & \partialxi^\alpha\left\lbrace M_{p-k}\omega\left(\frac{\xi}{h}\right)\japxih^{1-k}\int_0^x\langle y\rangle^{-\frac{p-k}{p-1}\sigma}\psi\left(\frac{\langle y\rangle}{\japxih^{p-1}}\right)dy\right\rbrace \nonumber \\
		& = & M_{p-k}\sum_{\alpha_1+\alpha_2+\alpha_3=\alpha}\frac{\alpha!}{\alpha_1!\alpha_2!\alpha_3!}\left[\partialxi^{\alpha_1}\omega\left(\frac{\xi}{h}\right)\right]\left[\partialxi^{\alpha_2}\japxih^{-1+k}\right] \nonumber \\
		& \times & \int_0^x\langle y\rangle^{-\frac{p-k}{p-1}\sigma}\left[\partialxi^{\alpha_3} \psi\left(\frac{\langle y\rangle}{\japxih^{p-1}}\right)\right]dy
	\end{eqnarray}
	Now we need to find a way to deal with the derivatives which appear in \eqref{6}. The first one is very simple; the second one can be estimated by $\partialxi\japxih^m\leq C_m^{\alpha}\alpha!\japxih^{m-\alpha}$, $\xi\in\R$, $\alpha\in\N$, where $C_m$ is a positive constant independent of $h$; the third derivative needs to be computed by using Fa\`a di Bruno's formula in the following way
	$$
	\partialxi^{\alpha_3}\psi\left(\frac{\langle y\rangle}{\japxih^{p-1}}\right)=\sum_{j=1}^{\alpha_3}\frac{1}{j!}\psi^{(j)}\left(\frac{\langle y\rangle}{\japxih^{p-1}}\right)\sum_{\gamma_1+\cdots+\gamma_j=\alpha_3}\frac{\alpha_3!}{\gamma_1!\cdots\gamma_j!}\prod_{\ell=1}^j\partialxi^{\gamma_\ell}\left(\frac{\langle y\rangle}{\japxih^{p-1}}\right).
	$$
	Hence, we can estimate in \eqref{6} as follows:
	\begin{eqnarray*}
		|\partialxi^\alpha\lambda_{p-k}(x,\xi)| & \leq & M_{p-k}\sum_{\alpha+\alpha_2+\alpha_3=\alpha}\frac{\alpha!}{\alpha_1!\alpha_2!\alpha_3!}\left|\omega^{(\alpha_1)}\left(\frac{\xi}{h}\right)\right|h^{-\alpha_1}|\partialxi^{\alpha_2}\japxih^{-k+1}| \nonumber \\
		& \times & \int_0^{|x|}\chi_\xi(y)\langle y\rangle^{-\frac{p-k}{p-1}\sigma}\sum_{j=1}^{\alpha_3}\frac{1}{j!}\left|\psi^{(j)}\left(\frac{\langle y\rangle}{\japxi^{p-1}}\right)\right| \nonumber \\
		& \times & \sum_{\gamma_1+\cdots+\gamma_j=\alpha_3}\frac{\alpha_3!}{\gamma_1!\cdots\gamma_j!}\prod_{\ell=1}^j\partialxi^{\gamma_\ell}\japxih^{1-p}\langle y\rangle dy  
		\end{eqnarray*}
	Then we have:
	\begin{eqnarray*}
		|\partialxi^\alpha\lambda_{p-k}(x,\xi)|
		& \leq & M_{p-k}\sum_{\alpha_1+\alpha_2+\alpha_3=\alpha}\frac{\alpha!}{\alpha_1!\alpha_2!\alpha_3!}C_\omega^{\alpha_1+1}\alpha_1!^\mu\japxih^{-\alpha_1}\langle R_{a_p}\rangle^{\alpha_1}C^{\alpha_2}\alpha_2!\japxih^{1-k-\alpha_2} \nonumber \\
		& \times & \int_0^{|x|}\chi_\xi(y)\langle y\rangle^{-\frac{p-k}{p-1}\sigma}\sum_{j=1}^{\alpha_3}\frac{1}{j!}C_\psi^{j+1} \nonumber \\
		& \times & \sum_{\gamma_1+\cdots+\gamma_j=\alpha_3}\frac{\alpha_3!}{\gamma_1!\cdots\gamma_j!}\langle y\rangle\prod_{\ell=1}^j C^{\gamma_\ell}\gamma_\ell!\japxih^{1-p-\gamma_\ell}dy \nonumber \\
		& \leq & M_{p-k}\sum_{\alpha_1+\alpha_2+\alpha_3=\alpha}\frac{\alpha!}{\alpha_1!\alpha_2!\alpha_3!}C_\omega^{\alpha_1+1}\alpha_1!^\mu\japxih^{-\alpha_1}\langle R_{a_p}\rangle^{\alpha_1}C^{\alpha_2}\alpha_2!\japxih^{1-k}\japxih^{-\alpha_2} \nonumber \\
		& \times & \int_0^{|x|}\langle y\rangle^{-\frac{p-k}{p-1}\sigma}\sum_{j=1}^{\alpha_3}\frac{1}{j!}C_\psi^{\alpha_3+1}\alpha_3!^\mu\sum_{\gamma_1+\cdots+\gamma_j=\alpha_3}\frac{\alpha_3}{\gamma_1!\cdots\gamma_j!}C^{\alpha_3}\gamma_1!\cdots\gamma_j!\japxih^{-\alpha_3} \nonumber \\
		& \leq & M_{p-k}C_{\omega,\psi,R_{a_p}}^{\alpha+1}\alpha!^\mu\japxih^{-\alpha}\japxih^{1-k}\int_0^{|x|}\chi_\xi(y)\langle y\rangle^{-\frac{p-k}{p-1}\sigma}dy \nonumber \\
		& \leq & \frac{M_{p-k}}{1-\frac{p-k}{p-1}\sigma}C_{\omega,\psi,R_{a_p}}^{\alpha+1}\japxih^{-\alpha}\japxih^{(p-k)(1-\sigma)} \nonumber \\
		& = & \frac{M_{p-k}}{1-\frac{p-k}{p-1}\sigma}C_{\omega,\psi,R_{a_p}}^{\alpha+1}\japxih^{(p-k)(1-\sigma)-\alpha}, \nonumber
	\end{eqnarray*}
	and we get (ii). To obtain (iii), we just need to observe that
	$$
	\int_0^{|x|}\chi_\xi(y)\langle y\rangle^{-\frac{p-k}{p-1}\sigma}dy\leq\japx^{1-\frac{p-k}{p-1}\sigma}.
	$$
	To obtain (iv), we use the fact that, on the support of $\psi(\langle y\rangle/\japxi_h^{p-1})$, we have $\japxih^{1-k}\leq\japx^{\frac{1-k}{p-1}}$, which implies
	$$
	\japxih^{1-k-\alpha}\japx^{1-\frac{p-k}{p-1}\sigma}\leq\japxih^{-\alpha}\japx^{\frac{p-k}{p-1}(1-\sigma)}.
	$$

	Now, by considering $\beta\geq1$, we have
	\begin{equation}\label{xderivative}
		\partialx^\beta\lambda_{p-k}(x,\xi)=M_{p-k}\omega\left(\frac{\xi}{h}\right)\japxih^{-k+1}\partialx^\beta\int_0^x\langle y\rangle^{-\frac{p-k}{p-1}\sigma}\psi\left(\frac{\langle y\rangle}{\japxih^{p-1}}\right)dy.
	\end{equation}
	Note that, we need to compute the derivative of order $\beta$ of the integral. If $\beta=1$, we have
	$$
	\partial_x\int_0^x\langle y\rangle^{-\frac{p-k}{p-1}\sigma}\psi\left(\frac{\langle y\rangle}{\japxih^{p-1}}\right)dy=\japx^{-\frac{p-k}{p-1}\sigma}\psi\left(\frac{\japx}{\japxih^{p-1}}\right),
	$$
	and then we can compute by using Leibniz formula
	\begin{eqnarray}
		\partialx^\beta\int_0^x\langle y\rangle^{-\frac{p-k}{p-1}\sigma}\psi\left(\frac{\langle y\rangle}{\japxih^{p-1}}\right)dy & = & \partialx^{\beta-1}\left[\japx^{-\frac{p-k}{p-1}\sigma}\psi\left(\frac{\japx}{\japxih^{p-1}}\right)\right] \nonumber \\
		& = & \sum_{\beta_1+\beta_2=\beta-1}\frac{(\beta-1)!}{\beta_1!\beta_2!}\partialx^{\beta_1}\japx^{-\frac{p-k}{p-1}\sigma}\partialx^{\beta_2}\psi\left(\frac{\japx}{\japxih^{p-1}}\right). \nonumber
	\end{eqnarray}
	Returning to \eqref{xderivative}, we can write
	\begin{eqnarray}
		\partialx^\beta\lambda_{p-k}(x,\xi) & = & M_{p-k}\omega\left(\frac{\xi}{h}\right)\japxih^{-k+1}\sum_{\beta_1+\beta_2=\beta-1}\frac{(\beta-1)!}{\beta_1!\beta_2!}\partialx^{\beta_1}\japx^{-\frac{p-k}{p-1}\sigma}\partialx^{\beta_2}\psi\left(\frac{\japx}{\japxih^{p-1}}\right) \nonumber \\
		& = & M_{p-k}\sum_{\beta_1+\beta_2=\beta-1}\frac{(\beta-1)!}{\beta_1!\beta_2!}\partialx^{\beta_1}\japx^{-\frac{p-k}{p-1}\sigma}\omega\left(\frac{\xi}{h}\right)\japxih^{-k+1}\partialx^{\beta_2}\psi\left(\frac{\langle x\rangle}{\japxih^{p-1}}\right) \nonumber  
	\end{eqnarray}
	and from this, it follows that
	\begin{eqnarray}
		\doublepartial\lambda_{p-k}(x,\xi) & = & M_{p-k}\sum_{\beta_1+\beta_2=\beta-1}\frac{(\beta-1)!}{\beta_1!\beta_2!}\partialx^{\beta_1}\japx^{-\frac{p-k}{p-1}\sigma}\partialxi^\alpha\left[\omega\left(\frac{\xi}{h}\right)\japxih^{-k+1}\partialx^{\beta_2}\psi\left(\frac{\langle x\rangle}{\japxih^{p-1}}\right)\right] \nonumber  \\
		& = & M_{p-k}\sum_{\beta_1+\beta_2=\beta-1}\frac{(\beta-1)!}{\beta_1!\beta_2!}\partialx^{\beta_1}\japx^{-\frac{p-k}{p-1}\sigma} \nonumber \\
		& \times & \sum_{\alpha_1+\alpha_2+\alpha_3=\alpha}\frac{\alpha!}{\alpha_1!\alpha_2!\alpha_3!}\partialxi^{\alpha_1}\omega\left(\frac{\xi}{h}\right)\partialxi^{\alpha_2}\japxih^{-k+1}\partialxi^{\alpha_3}\partialx^{\beta_2}\psi\left(\frac{\japx}{\japxih^{p-1}}\right) \nonumber \\
		& = & M_{p-k}\sum_{\beta_1+\beta_2=\beta-1}\frac{(\beta-1)!}{\beta_1!\beta_2!}\partialx^{\beta_1}\japx^{-\frac{p-k}{p-1}\sigma} \nonumber \\
		& \times & \sum_{\alpha_1+\alpha_2+\alpha_3=\alpha}\frac{\alpha!}{\alpha_1!\alpha_2!\alpha_3!}\omega^{(\alpha_1)}\left(\frac{\xi}{h}\right)h^{-\alpha_1}\partialxi^{\alpha_2}\japxih^{-k+1} \nonumber \\
		& \times & \sum_{j=1}^{\alpha_3+\beta_2}\frac{\psi^{(j)}\left(\frac{\japx}{\japxih^{p-1}}\right)}{j!}\sum_{\gamma_1+\cdots+\gamma_j=\alpha_3}\sum_{\delta_1+\cdots+\delta_j=\beta_2}\frac{\alpha_3!\beta_2!}{\gamma_1!\delta_1!\cdots\gamma_j!\delta_j!}\prod_{\ell=1}^j\partialx^{\delta_\ell}\japx\partialxi^{\gamma_\ell}\japxih^{1-p}. \nonumber
	\end{eqnarray}
	Now, we can estimate
	\begin{eqnarray}
		|\doublepartial\lambda_{p-k}(x,\xi)| & \leq & M_{p-k}\sum_{\beta_1+\beta_2=\beta-1}\frac{(\beta-1)!}{\beta_1!\beta_2!}|\partialx^{\beta_1}\japx^{-\frac{p-k}{p-1}\sigma}| \nonumber \\
		& \cdot &\hskip-0.5cm \sum_{\alpha_1+\alpha_2+\alpha_3=\alpha}\frac{\alpha!}{\alpha_1!\alpha_2!\alpha_3!}\left|\omega^{(\alpha_1)}\left(\frac{\xi}{h}\right)\right|h^{-\alpha_1}|\partialxi^{\alpha_2}\japxih^{-k+1}|\cdot \chi_\xi(x) \nonumber \\
		& \cdot &\hskip-0.5cm \sum_{j=1}^{\alpha_3+\beta_2}\frac{\left|\psi^{(j)}\left(\frac{\japx}{\japxih^{p-1}}\right)\right|}{j!}\sum_{\gamma_1+\cdots+\gamma_j=\alpha_3}\sum_{\delta_1+\cdots+\delta_j=\beta_2}\frac{\alpha_3!\beta_2!}{\gamma_1!\delta_1!\cdots\gamma_j!\delta_j!}\prod_{\ell=1}^j|\partialx^{\delta_\ell}\japx||\partialxi^{\gamma_\ell}\japxih^{1-p}| \nonumber \\
		& \leq & M_{p-k}\sum_{\beta_1+\beta_2=\beta-1}\frac{(\beta-1)!}{\beta_1!\beta_2!}C^{\beta_1}\beta_1!\japx^{-\frac{p-k}{p-1}\sigma-\beta_1} \nonumber \\
		& \cdot &\hskip-0.5cm \sum_{\alpha_1+\alpha_2+\alpha_3=\alpha}\frac{\alpha!}{\alpha_1!\alpha_2!\alpha_3!}C_\omega^{\alpha_1+1}\alpha_1!^\mu\japxih^{-\alpha_1}\langle R_{a_p}\rangle^{\alpha_1}C^{\alpha_1}\alpha_1!\japxih^{-k+1-\alpha_1} \nonumber \\
		& \cdot &\hskip-0.3cm \sum_{j=1}^{\alpha_3+\beta_2}\frac{1}{j!}C_{\psi}^{j+1}j!^\mu\sum_{\gamma_1+\cdots+\gamma_j=\alpha_3}\sum_{\delta_1+\cdots+\delta_j=\beta_2}\frac{\alpha_3!\beta_2!}{\gamma_1!\delta_1!\cdots\gamma_j!\delta_j!} \nonumber \\
		& \cdot & \prod_{\ell=1}^j C^{\delta_\ell}\delta_\ell!\japx^{1-\delta_\ell} C^{\gamma_\ell}\gamma_\ell!\japxih^{1-p-\gamma_\ell} \nonumber \\
		& \leq & M_{p-k}C_{\psi,\omega,R_{a_p}}^{\alpha+\beta+1}\alpha!^\mu(\beta-1)!^\mu\japxih^{1-k-\alpha}\japx^{-\frac{p-k}{p-1}\sigma-(\beta-1)}. \nonumber
	\end{eqnarray}
\end{proof}

\begin{remark} From Lemma \ref{estimatesforlambda} we  conclude that $\lambda_{p-k}\in \mathbf{SG}^{0,\frac{p-k}{p-1}(1-\sigma)}_\mu(\R^2)$ and $\lambda_{p-k}\in \mathbf{S}_\mu^{(p-k)(1-\sigma)}(\R^2)$, for each $k=1,...,p-1$. Hence
	\begin{equation}\label{function_Lambda}
		\Lambda=\sum_{k=1}^{p-1}\lambda_{p-k}\in \mathbf{SG}_\mu^{0,1-\sigma}(\R^2)\cap \mathbf{S}_\mu^{(p-1)(1-\sigma)}(\R^2).
	\end{equation}
\end{remark}

\subsection{Invertibility of the operator $e^\Lambda(x,D)$}

In this section we are going to construct the inverse operator of $e^\Lambda(x,D)$ in terms of the reverse operator $^R (e^{-\Lambda}(x,D))$. Notice that 
\begin{equation} \label{Lambda_reverse_estimate}
	|\partial_\xi^\alpha\Lambda(x,\xi)|\leq C_\Lambda^{\alpha+1}\alpha!^\mu \japxih^{(p-1)(1-\sigma)-\alpha}
\end{equation} and
\begin{equation}\label{Lambda_reverse_stronger_estimate}
	|\doublepartial\Lambda(x,\xi)|\leq C_\Lambda^{\alpha+\beta+1}(\alpha!\beta!)^\mu \japxih^{-\alpha},\quad \beta\geq 1.
\end{equation}
for some $C_\Lambda >0$. This estimate means that if at least an $x$-derivative falls on $\Lambda$, then we obtain a symbol of order $0$ ($<1/\kappa$). Taking into account  \eqref{Lambda_reverse_estimate} and \eqref{Lambda_reverse_stronger_estimate} the following result holds, cf. \cite[Lemma 3.4]{AAC3evolGevrey}.

\begin{lemma}\label{lemma_4_of_AA22}
	Let $\mu>1$. Then, for $h>0$ large enough, the operator $e^\Lambda(x,D)$ is invertible and its inverse is given by
	$$
	(e^\Lambda(x,D))^{-1} = \hskip2pt ^R (e^{-\Lambda}(x,D)) \circ (I+r(x,D))^{-1} = \hskip2pt ^R (e^{-\Lambda}(x,D)) \circ \sum_{j\geq0}(-r(x,D))^j,
	$$
	where $r=\tilde{r}+\bar{r}$ for some $\tilde{r} \in\mathbf{SG}_{\mu}^{-1,-\sigma}(\R^2)$ and $\bar{r}$ satisfying 
	\begin{equation}\label{aces_high}
	|\partial^{\alpha}_{\xi}\partial^{\beta}_{x}\bar{r}(x,\xi)| \leq C^{\alpha+\beta+1} (\alpha!\beta!)^{\kappa} e^{-c \{ \langle x \rangle^{\frac{1}{\kappa}} + \langle \xi \rangle^{\frac{1}{\kappa}} \} },
	\end{equation}
	with $\kappa > 2\mu -1$ and for some $C, c > 0$.
	Moreover, for every $N \in \N$ we have
	\begin{equation}\label{rasexpansion}
			\tilde{r} -\sum_{1 \leq \gamma \leq N}\frac{1}{\gamma!}\partialxi^\gamma\left(e^{\Lambda}D_x^\gamma e^{-\Lambda}\right) \in \mathbf{SG}_{\mu}^{-(N+1),-\sigma(N+1)}(\R^2),
	\end{equation}
	and the symbol of the operator $\sum(-r(x,D))^j$ is  of the form $q + q_{\infty}$, where $q \in \mathbf{SG}^{0,0}_{\mu}(\R^2)$ and $q_{\infty}$ satisfies \eqref{aces_high} for all $C, c >0$ and $\kappa > 2\mu-1$. 
\end{lemma}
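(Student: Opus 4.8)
The plan is to build the inverse of $e^\Lambda(x,D)$ by a Neumann-series argument, using the reverse operator $^R(e^{-\Lambda}(x,D))$ as a first approximation and then correcting by the errors produced in the composition. The starting point is the conjugation-type formula for the composition $e^\Lambda(x,D)\circ{}^R(e^{-\Lambda}(x,D))$, which is obtained from an oscillatory-integral computation: writing the kernel of the composition and performing a Taylor expansion in the phase around the diagonal, one gets $e^\Lambda(x,D)\circ{}^R(e^{-\Lambda}(x,D)) = I + r(x,D)$, where the symbol $r$ is, up to any order $N$, given by the asymptotic sum $\sum_{1\le\gamma\le N}\frac{1}{\gamma!}\partial_\xi^\gamma(e^\Lambda D_x^\gamma e^{-\Lambda})$ plus a remainder. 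The key observation driving the whole lemma is the pair of estimates \eqref{Lambda_reverse_estimate}--\eqref{Lambda_reverse_stronger_estimate}: every $x$-derivative falling on $\Lambda$ lowers the $\xi$-order to $0$, and since each term $\partial_\xi^\gamma(e^\Lambda D_x^\gamma e^{-\Lambda})$ with $\gamma\ge1$ necessarily has at least one $x$-derivative on some factor $e^{\pm\Lambda}$, the leading behavior in $\xi$ is governed not by $(p-1)(1-\sigma)$ but by the gain $-1$ per unit of $\gamma$; tracking the $x$-decay similarly gives a gain of $-\sigma$ per $\gamma$. This is what produces the membership $\tilde r\in\mathbf{SG}_\mu^{-1,-\sigma}(\R^2)$ and, more precisely, \eqref{rasexpansion}. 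The exponentially small piece $\bar r$ satisfying \eqref{aces_high} arises from the tail of the oscillatory integral (the non-diagonal contribution), exactly as in the $r_\infty$ term of Theorem \ref{theorem_conjugation}, with the loss $\kappa=2\mu-1$ coming from summing the factorially-growing remainder estimates $N!^{2\kappa-1}$ optimized in $N$.

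Next I would invert $I + r(x,D)$. Since $r=\tilde r+\bar r$ with $\tilde r$ of negative order $(-1,-\sigma)$ (w.r.t.\ $\xi$ and $x$) and $\bar r$ exponentially decaying, for $h$ large enough the operator norm of $r(x,D)$ on the relevant scale of spaces is $<1$ (this is where the largeness of $h$ is used: replacing $\langle\xi\rangle$ by $\langle\xi\rangle_h$ in all symbol seminorms makes the $(-1,\cdot)$ order term small, uniformly, and the constants are $h$-independent by the remark preceding Theorem \ref{theorem_conjugation}). Hence the Neumann series $\sum_{j\ge0}(-r(x,D))^j$ converges and provides $(I+r(x,D))^{-1}$. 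Using the composition formula for $\mathbf{SG}_\mu$ symbols (stable under composition, with orders adding) together with the analogue for the exponentially small class, each power $(-r(x,D))^j$ has symbol in $\mathbf{SG}_\mu^{0,0}$ modulo an exponentially small term; summing the series and controlling the growth of the constants — again using the $h$-largeness to beat the geometric factors — yields a total symbol $q+q_\infty$ with $q\in\mathbf{SG}_\mu^{0,0}(\R^2)$ and $q_\infty$ satisfying \eqref{aces_high} for all $C,c>0$ and all $\kappa>2\mu-1$. Finally, $(e^\Lambda(x,D))^{-1} = {}^R(e^{-\Lambda}(x,D))\circ(I+r(x,D))^{-1}$ because this composition is a right inverse by construction, and a left inverse follows either by the symmetric computation of $^R(e^{-\Lambda}(x,D))\circ e^\Lambda(x,D)$ or by a standard uniqueness-of-inverses argument on the appropriate function spaces (using Proposition \ref{contgev} for continuity).

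The main obstacle, and the place where the real work lies, is the first step: establishing \eqref{rasexpansion} with the precise orders $-(N+1)$ in $\xi$ and $-\sigma(N+1)$ in $x$, and simultaneously extracting the exponentially small remainder $\bar r$ with the sharp loss $\kappa=2\mu-1$. This requires a careful oscillatory-integral analysis of $e^\Lambda(x,D)\circ{}^R(e^{-\Lambda}(x,D))$ in which one must (a) Taylor-expand the phase and amplitude to order $N$, (b) estimate each term using \eqref{Lambda_reverse_stronger_estimate} to see that the $x$-derivatives forced onto $\Lambda$ convert $\xi$-order into $x$-decay at the stated rates, and (c) bound the integral remainder by integrations by parts, summing the factorial estimates and optimizing in $N$ to convert the polynomial tail into the exponential decay $e^{-c\{\langle x\rangle^{1/\kappa}+\langle\xi\rangle^{1/\kappa}\}}$. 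Fortunately, this is exactly the type of computation carried out in \cite[Lemma 3.5]{AACKdV}, whose argument transfers essentially verbatim once one observes that the function $\Lambda$ here satisfies the same structural estimates \eqref{Lambda_reverse_estimate}--\eqref{Lambda_reverse_stronger_estimate}; so in the write-up I would isolate these two estimates as the only input needed and then invoke the cited result, indicating the (minor) modifications required by the presence of the extra parameter $h$.
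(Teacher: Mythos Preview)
Your proposal is correct and matches the paper's approach exactly: the paper does not give an independent proof of this lemma but simply notes that, once the estimates \eqref{Lambda_reverse_estimate}--\eqref{Lambda_reverse_stronger_estimate} for $\Lambda$ are in hand, the result follows from \cite[Lemma~3.5]{AACKdV}. Your sketch fleshes out precisely what that cited argument consists of and ends by invoking the same reference, so there is nothing to add.
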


Now we can prove some results about the symbol $r(x,\xi)$ of the operator $r(x,D)$ and on the corresponding Neumann series.

\begin{lemma}\label{lemma_asymptotic_expansion_of_r}
	The symbol $r(x,\xi)$ appearing in Lemma \ref{lemma_4_of_AA22} can be expressed as
	\begin{equation}\label{asymptotic_expansion_of_r}
		r=-\partialxi D_x\Lambda+b_{-2}+\cdots+b_{-(p-2)}+b_{-(p-1)}+b_{-p} 
	\end{equation}
	where $b_{-m}\in\mathbf{SG}_\mu^{-m,-\frac{p-m+1}{p-1}\sigma}(\R^2)$ depends only on $\lambda_{p-1},...,\lambda_{p-(m-1)}$, for $m=2,...,p-1$, and $b_{-p}$ is the sum of a symbol in $ \mathbf{SG}_{\mu}^{-p,-\frac1{p-1}\sigma}(\R^2)$ and of a symbol satisfying \eqref{aces_high} for some $C, c > 0$ and $\kappa = 2\mu-1$.
\end{lemma}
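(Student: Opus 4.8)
The plan is to identify $r(x,\xi)$ explicitly starting from the asymptotic expansion \eqref{rasexpansion} in Lemma \ref{lemma_4_of_AA22}. Recall that $\tilde r$ agrees, modulo an arbitrarily regularizing symbol, with the formal series $\sum_{\gamma\geq 1}\frac{1}{\gamma!}\partialxi^\gamma(e^\Lambda D_x^\gamma e^{-\Lambda})$, and that $r=\tilde r+\bar r$ with $\bar r$ satisfying \eqref{aces_high}. So it suffices to analyze the principal terms of this series and organize them according to their order w.r.t.\ $x$. The leading ($\gamma=1$) term is $\partialxi\big(e^\Lambda D_x e^{-\Lambda}\big)=\partialxi\big(-D_x\Lambda\big)=-\partialxi D_x\Lambda$, which accounts for the first summand in \eqref{asymptotic_expansion_of_r}; by Lemma \ref{estimatesforlambda}(v) (summed over $k$) this belongs to $\mathbf{SG}_\mu^{-1,1-\sigma-1}=\mathbf{SG}_\mu^{-1,-\sigma}(\R^2)$, consistent with $\tilde r\in\mathbf{SG}_\mu^{-1,-\sigma}(\R^2)$.

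Next I would expand $e^\Lambda D_x^\gamma e^{-\Lambda}$ for $\gamma\geq 1$ via Leibniz, writing $D_x^\gamma e^{-\Lambda}=e^{-\Lambda}\cdot(\text{polynomial in }D_x\Lambda,\ D_x^2\Lambda,\dots)$, so that $e^\Lambda D_x^\gamma e^{-\Lambda}$ is a universal polynomial $P_\gamma$ in the derivatives $D_x^\ell\Lambda$. Each monomial is a product of factors of the form $D_x^{\ell}\Lambda=\sum_{k=1}^{p-1}D_x^\ell\lambda_{p-k}$; expanding the sum over $k$, every monomial is a product of factors $D_x^{\ell_i}\lambda_{p-k_i}$. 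The crucial bookkeeping is the order w.r.t.\ $x$: by Lemma \ref{estimatesforlambda}(v), a factor $\partialx^{\ell}\lambda_{p-k}$ (with $\ell\geq 1$) has order w.r.t.\ $x$ equal to $-\tfrac{p-k}{p-1}\sigma-(\ell-1)$, and after applying $\partialxi$ once more (coming from the $\tfrac1{\gamma!}\partialxi^\gamma$ outside) one lowers the $\xi$-order. I would then group all contributions of fixed total $\xi$-order $-m$ into a symbol $b_{-m}$, $m=2,\dots,p-1$, check it lies in $\mathbf{SG}_\mu^{-m,\,-\frac{p-m+1}{p-1}\sigma}(\R^2)$ by carefully adding the $x$-orders of the constituent $\lambda$-factors, and verify the key structural fact that $b_{-m}$ can only involve $\lambda_{p-1},\dots,\lambda_{p-(m-1)}$ — this follows because the $\lambda_{p-k}$ with $k\geq m$ have $\xi$-order $1-k\leq 1-m$ as symbols in $\mathbf{S}_\mu^{(p-k)(1-\sigma)}$ (equivalently, their $x$-behavior is too mild), so a factor containing $\lambda_{p-k}$ with $k\geq m$ can only appear starting at $\xi$-order $\leq -m$ after the extra $\partialxi$, hence already in $b_{-p}$ or beyond. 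Everything of $\xi$-order $\leq -p$ (together with the tail of the Neumann/Faà-di-Bruno series, which is regularizing by \eqref{aces_high}) is collected into $b_{-p}$, which therefore splits as a symbol in $\mathbf{SG}_\mu^{-p,-\frac{1}{p-1}\sigma}(\R^2)$ plus the contribution of $\bar r$ from Lemma \ref{lemma_4_of_AA22}, i.e.\ a symbol satisfying \eqref{aces_high} with $\kappa=2\mu-1$.

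The main obstacle, and the part requiring genuine care rather than routine symbol calculus, is the combinatorial tracking of the $x$-orders: when a monomial in $P_\gamma$ is a product of $j$ factors $\partialx^{\ell_i}\lambda_{p-k_i}$ with $\sum_i\ell_i$ related to $\gamma$ and the extra $\xi$-derivatives distributed by Leibniz, one must show that the accumulated $x$-order is exactly $-\tfrac{p-m+1}{p-1}\sigma$ when the accumulated $\xi$-order is $-m$, uniformly, and that no cancellation or mismatch occurs at the boundary cases. This is where the specific exponent $\tfrac{p-k}{p-1}\sigma$ in the definition \eqref{function_lambda_(p-k)} of $\lambda_{p-k}$ is used: the identity $\big(-\tfrac{p-k}{p-1}\sigma\big)+\big(-(\ell-1)\big)$ summed over a configuration collapses to $-\tfrac{p-m+1}{p-1}\sigma$ precisely because of how $k$ and $\ell$ are constrained by the order of the term. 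Once this arithmetic is pinned down, the rest — estimating remainders of the Faà di Bruno expansion and absorbing them into $b_{-p}$, invoking \eqref{rasexpansion} to control the tail $\mathbf{SG}_\mu^{-(N+1),-\sigma(N+1)}$ for $N=p-1$, and using Lemma \ref{lemma_4_of_AA22} for $\bar r$ — is standard.
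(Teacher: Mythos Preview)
Your plan coincides with the paper's: use \eqref{rasexpansion} with $N=p-1$, apply Fa\`a di Bruno to $e^\Lambda D_x^\gamma e^{-\Lambda}$ for $\gamma\geq 2$, expand $\Lambda=\sum_k\lambda_{p-k}$, and sort the resulting monomials $\prod_\ell D_x^{\gamma_\ell}\lambda_{p-k_\ell}$ by their $\xi$-order, which by Lemma~\ref{estimatesforlambda}(v) equals $\sum_\ell(1-k_\ell)$.

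There is one slip in your dependence argument that, as written, would not close. You say a monomial containing a factor $D_x^\ell\lambda_{p-k}$ with $k\geq m$ has $\xi$-order ``$\leq -m$ after the extra $\partialxi$'' and hence lands ``in $b_{-p}$ or beyond''. The bound $\leq -m$ does not exclude a contribution to $b_{-m}$, and the jump to $b_{-p}$ is unjustified. The correct count is: such a product has $\xi$-order at most $1-m$ (the distinguished factor contributes $1-k\leq 1-m$, all others $\leq 0$); since only the terms with $\gamma\geq 2$ feed into the $b_{-m}$'s (the $\gamma=1$ piece is kept separately as $-\partialxi D_x\Lambda$), after $\partialxi^\gamma$ the order is $\leq 1-m-\gamma\leq -m-1$, strictly below $-m$. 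This is what forces $b_{-m}$ to depend only on $\lambda_{p-1},\dots,\lambda_{p-(m-1)}$. The paper runs the same argument one step earlier, at the level of the product before $\partialxi^\gamma$, and pins down the $x$-order by identifying the extremal configuration explicitly (a single factor $D_x^{\gamma_s}\lambda_{p-m+1}$ with the remaining factors $D_x^{\gamma_\ell}\lambda_{p-1}$), which gives the bound $-\tfrac{p-m+1}{p-1}\sigma$ directly; this replaces the ``collapse identity'' you allude to but do not write down. Note also that the $x$-order is \emph{at most} $-\tfrac{p-m+1}{p-1}\sigma$, not exactly: membership in $\mathbf{SG}_\mu^{-m,-\frac{p-m+1}{p-1}\sigma}$ is what is claimed.
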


\begin{proof}
	From \eqref{rasexpansion} we have that
	\begin{equation}\label{asymptotic_of_r}
		r = -\partialxi D_x\Lambda + \sum_{\gamma=2}^{p-1} \frac{1}{\gamma!}\partialxi^\gamma\left(e^{\Lambda}D_x^\gamma e^{-\Lambda}\right) + q_{-p},
	\end{equation}
with $q_{-p} \in \mathbf{SG}_{\kappa}^{-p,-p\sigma}(\R^2)$.
	For $\gamma\geq2$, by Fa\`a di Bruno's formula we get
	$$
	e^\Lambda D_x^\gamma e^{-\Lambda} = \sum_{j=1}^\gamma\frac{(-1)^j}{j!}\sum_{\stackrel{\gamma_1+\cdots+\gamma_j=\gamma}{\gamma_\ell\geq1}}\frac{\gamma!}{\gamma_1!\cdots\gamma_j!}\prod_{\ell=1}^j D_x^{\gamma_\ell}\Lambda.
	$$
	Now, let us analyse $\partial_\xi^\gamma \left(\prod_{\ell=1}^j D_x^{\gamma_\ell}\Lambda\right)$. Since $\gamma \geq 2$ in the formula \eqref{asymptotic_of_r}, it is sufficient to prove that
	$$
	\prod_{\ell=1}^j D_x^{\gamma_\ell}\Lambda=\tilde{b}_{-2}+\cdots+\tilde{b}_{-(p-1)}+\tilde{b}_{-p},
	$$
	for some $\tilde{b}_{-m}\in\mathbf{SG}_\mu^{-m+2,-\frac{p-m+1}{p-1}\sigma}(\R^2)$, $m=2,...,p$, depending only on $\lambda_{p-1},...,\lambda_{p-(m-1)}$. Since $\Lambda=\lambda_{p-1}+\cdots+\lambda_1$, we have that $D_x^{\gamma_\ell}\Lambda=D_x^{\gamma_\ell}(\lambda_{p-1}+\cdots+\lambda_1)$, $\ell=1,...,j$, which implies that
	$$
	\prod_{\ell=1}^j D_x^{\gamma_\ell}\Lambda=(D_x^{\gamma_1}\lambda_{p-1}+\cdots+D_x^{\gamma_1}\lambda_1)\cdots(D_x^{\gamma_j}\lambda_{p-1}+\cdots+D_x^{\gamma_j}\lambda_1).
	$$
	By (v) of Lemma \ref{estimatesforlambda}, the above product has order zero with respect to $\xi$, and the only term with order exactly $0$ is
	$$
	D_x^{\gamma_1}\lambda_{p-1}\cdots D_x^{\gamma_j}\lambda_{p-1}\in\mathbf{SG}_\mu^{0,-\sigma}(\R^2).
	$$
	Similarly we notice that the only term of order exactly $-1$ with respect to $\xi$ is the sum of products of the form
	$$
	D_x^{\gamma_s}\lambda_{p-2}\prod_{\stackrel{\ell\le j}{\ell\ne s}}D_x^{\gamma_\ell}\lambda_{p-1}\in\mathbf{SG}_\mu^{-1,-\frac{p-2}{p-1}\sigma}(\R^2),
	$$
	which do not depend on $\lambda_{p-3},...,\lambda_1$. In general, we note that products containing at least one factor of the type $D_x^{\gamma_\ell}\lambda_{p-k}$ with $k\geq m$ have order at most $-m+1$, so the terms of order $-m+2$ with respect to $\xi$ cannot depend on $\lambda_{p-k}$ for $k\geq m$. Among these terms, the ones of highest order in $x$ are obviously those depending on $\lambda_{p-m+1}$ that is products of the form
	$$
	D_x^{\gamma_s}\lambda_{p-m+1}\prod_{\stackrel{\ell\le j}{\ell\ne s}}D_x^{\gamma_\ell}\lambda_{p-1}\in\mathbf{SG}_\mu^{-m+2,-\frac{p-m+1}{p-1}\sigma}(\R^2).
	$$
	This gives the assertion.
\end{proof}

By using Lemma \ref{lemma_asymptotic_expansion_of_r} we can prove the next result.

\begin{lemma}\label{lemma_structure_of_powers_of_r}
	For each $j\in\Bbb{N}$, $j\geq2$, the symbol $d_j$ of the operator $(-r(x,D))^j$ is of the form
	\begin{equation}\label{symbol_of_powers_of_r}
		d_j=b_{-j}^{[j]}+b_{-(j+1)}^{[j]}+\cdots+b_{-(p-1)}^{[j]}+b_{-p}^{[j]}
	\end{equation}
	for some $b_{-m}^{[j]}\in\mathbf{SG}_\mu^{-m,-\frac{jp-m}{p-1}\sigma}(\R^2)$ depending only on $\lambda_{p-1},...,\lambda_{p-m+j-1}$, for each $m=j,...,p-1$ and $b_{-p}^{[j]}$ is the sum of a symbol in  $\mathbf{SG}_\mu^{-p,-\frac{p(j-1)}{p-1}\sigma}(\R^2)$ and of a symbol satisfying \eqref{aces_high} for some $C, c >0$ and $\kappa = 2\mu-1$.
\end{lemma}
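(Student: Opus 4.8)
I would prove this by induction on $j$, taking as starting point the case playing the role of ``$j=1$''. Re-grouping the expansion of $r$ from Lemma \ref{lemma_asymptotic_expansion_of_r} according to the order w.r.t.\ $\xi$ — and using Lemma \ref{estimatesforlambda}(v), by which $-\partialxi D_x\lambda_{p-\ell}$ has order $-\ell$ w.r.t.\ $\xi$ and order $-\frac{p-\ell}{p-1}\sigma$ w.r.t.\ $x$ — one writes $r=\sum_{\ell=1}^{p-1}\rho_{-\ell}+b_{-p}$, where $\rho_{-\ell}:=-\partialxi D_x\lambda_{p-\ell}+b_{-\ell}\in\mathbf{SG}_\mu^{-\ell,-\frac{p-\ell}{p-1}\sigma}(\R^2)$ (with $b_{-1}:=0$) depends only on $\lambda_{p-1},\dots,\lambda_{p-\ell}$; this is precisely the $j=1$ instance of \eqref{symbol_of_powers_of_r}. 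For the inductive step I use $(-r(x,D))^j=(-r(x,D))^{j-1}\circ(-r(x,D))$ together with the composition formula for $\mathbf{SG}$ pseudodifferential operators (the analogue of \cite[Proposition 6.4]{KN}; cf.\ \cite{CappielloRodino}): for every $N$,
\[
d_j=\sum_{|\gamma|<N}\frac{1}{\gamma!}\,\partialxi^\gamma d_{j-1}\;D_x^\gamma(-r)+R_N ,
\]
where $R_N$ lowers both orders by $N$, hence can be absorbed into $b_{-p}^{[j]}$ once $N$ is chosen large.

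Substituting the inductive decomposition $d_{j-1}=\sum_{m'=j-1}^{p-1}b_{-m'}^{[j-1]}+b_{-p}^{[j-1]}$ and the re-grouped form of $r$, I would organize the resulting symbols by their order w.r.t.\ $\xi$. A generic summand $\gamma!^{-1}\partialxi^\gamma b_{-m'}^{[j-1]}\,D_x^\gamma\rho_{-\ell}$ has order $-(m'+\ell+|\gamma|)$ w.r.t.\ $\xi$, and, since $\partialxi$ does not change the order w.r.t.\ $x$ while each $D_x$ lowers it by one,
\[
\text{order w.r.t. }x\ \leq\ -\frac{(j-1)p-m'}{p-1}\sigma-\frac{p-\ell}{p-1}\sigma-|\gamma|\ =\ -\frac{jp-(m'+\ell)}{p-1}\sigma-|\gamma| .
\]
Thus every summand of $\xi$-order exactly $-m$ with $j\leq m\leq p-1$ belongs to $\mathbf{SG}_\mu^{-m,-\frac{jp-m}{p-1}\sigma}(\R^2)$ (the $x$-order being attained only when $|\gamma|=0$, since $m'+\ell=m-|\gamma|\leq m$), and since each of the $j$ factors lowers the $\xi$-order by at least one no term of $\xi$-order in $\{-(j-1),\dots,-1\}$ occurs, which accounts for the range $m\geq j$. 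For the dependence, $m'+\ell\leq m$, $m'\geq j-1$ and $\ell\geq1$ give $\min\{p-m'+j-2,\,p-\ell\}\geq p-m+j-1$, so $b_{-m'}^{[j-1]}$ (which, by the inductive hypothesis, involves only $\lambda_{p-1},\dots,\lambda_{p-m'+j-2}$) and $\rho_{-\ell}$ (which involves only $\lambda_{p-1},\dots,\lambda_{p-\ell}$) combine into a symbol depending only on $\lambda_{p-1},\dots,\lambda_{p-m+j-1}$. Everything else — the summands of $\xi$-order $\leq-p$ (in particular all products containing a $b_{-p}$-type or $b_{-p}^{[j-1]}$-type factor), the remainder $R_N$, and every product containing a regularizing factor (which remains regularizing, by composition estimates as in Lemma \ref{lemma_4_of_AA22}) — is collected into $b_{-p}^{[j]}$.

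The expansion itself is routine; the work lies in the bookkeeping, and the step I expect to be most delicate is the control of $b_{-p}^{[j]}$. There one must check simultaneously that $R_N$ can be forced below the required orders in both variables by enlarging $N$, that the composition of the infinitely many regularizing contributions produced through the Neumann series of Lemma \ref{lemma_4_of_AA22} still satisfies an estimate of the form \eqref{aces_high} (with finite constants and $\kappa=2\mu-1$), and that the decay w.r.t.\ $x$ of the terms of $\xi$-order $\leq-p$ is strong enough; this last check rests on the fact that at each composition step either an extra $\xi$-derivative appears or a factor of type $b_{-m}$ or $b_{-p}$ is used, each of which improves the balance between the order w.r.t.\ $\xi$ and the order w.r.t.\ $x$ and so lets it propagate through the iteration. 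A second point requiring care is keeping the dependence on the $\lambda_{p-k}$'s sharp throughout the induction, since a single misindexed factor would compromise the later stage where these symbols are re-expressed in terms of $\mathbf{Im}\,a_{p-j}$.
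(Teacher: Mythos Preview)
Your proof is correct and follows essentially the same route as the paper's: induction on $j$, the $\mathbf{SG}$ composition formula, and bookkeeping of the $\xi$- and $x$-orders together with the dependence on $\lambda_{p-1},\dots,\lambda_{p-m+j-1}$. Your presentation is in fact slightly cleaner, since you start the induction at $j=1$ by regrouping $r$ as $\sum_\ell\rho_{-\ell}+b_{-p}$ and run a uniform inductive step, whereas the paper works out the base case $j=2$ by hand (splitting $(-r)^2$ into four blocks $\mathtt{F},\mathtt{G},\mathtt{H},\mathtt{L}$) before sketching the passage from $N$ to $N+1$; both proofs leave the fine analysis of $b_{-p}^{[j]}$ at the same heuristic level.
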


\begin{proof}
	We can argue by induction on $j$. For $j=2$, we have
	\begin{eqnarray}
		(-r(x,D))^2 & = & \sum_{k=1}^{p-1}\sum_{\ell=1}^{p-1}(\partialxi D_x\lambda_{p-k})(x,D)\circ(\partialxi D_x\lambda_{p-\ell})(x,D) \nonumber \\
		& - & \left(\sum_{k=1}^{p-1}\partialxi D_x\lambda_{p-k}\right)(x,D)\circ\left(\sum_{\ell=2}^p b_{-\ell}\right)(x,D) \nonumber \\
		& - & \left(\sum_{\ell=2}^p b_{-\ell}\right)(x,D)\circ\left(\sum_{k=1}^{p-1}\partialxi D_x\lambda_{p-k}\right)(x,D) \nonumber \\
		& + & \sum_{s=2}^p\sum_{\ell=2}^p b_{-s}(x,D)\circ b_{-\ell}(x,D) \nonumber \\
		& =: & \mathtt{F}+\mathtt{G}+\mathtt{H}+\mathtt{L}. \nonumber
	\end{eqnarray}
	Consider
	$$
	\mathtt{F}=\sum_{k=1}^{p-1}\sum_{\ell=1}^{p-1}(\partialxi D_x\lambda_{p-k})(x,D)\circ(\partialxi D_x\lambda_{p-\ell})(x,D).
	$$
	We immediately notice that, since the order of each term of the sum $F$ with respect to $\xi$ is $-k-\ell$, then the terms with order exactly $-m$ w.r.t. $\xi$ cannot depend on $\lambda_{p-k}$ for $k\geq m$ or $\lambda_{p-\ell}$ for $\ell\geq m$, so they are of the form $(\partialxi D_x\lambda_{p-k})(x,D)\circ(\partialxi D_x\lambda_{p-\ell})(x,D)$ for $k+\ell=m$. Hence their symbols belong to $\mathbf{SG}_\mu^{-m,-\frac{2p-m}{p-1}\sigma}(\R^2)$. \\ Concerning
	$$
	\mathtt{G}=\left(\sum_{k=1}^{p-1}\partialxi D_x\lambda_{p-k}\right)(x,D)\circ\left(\sum_{\ell=2}^p b_{-\ell}\right)(x,D),
	$$
	we notice that for $k\geq m$ the compositions of $(\partialxi D_x\lambda_{p-k})(x,D)$ with $\sum_{\ell=2}^p b_{-\ell}(x,D)$ give operators with order less than or equal to $-m-2$ w.r.t. $\xi$. Hence, the terms of order exactly $-m$ in this sum do not depend on $\lambda_{p-2}$ for $s\geq m$. Moreover, these terms are of the form $\partialxi D_x \lambda_{p-s}(x,D)\circ b_{-\ell}(x,D)$ with $\ell=m-s$, hence they belong to $\mathbf{SG}_\mu^{-m,-\frac{2p-m+1}{p-1}}(\R^2)$. The term $\mathtt{H}$ can be treated in the same way. \\ Finally, let us consider the term
	$$
	\mathtt{L}=\sum_{s=2}^p\sum_{\ell=2}^p b_{-s}(x,D)\circ b_{-\ell}(x,D).
	$$ The terms depending on $\lambda_{p-k}$ with $k\geq m$ appear only in compositions of the form $b_{-s}(x,D) \circ b_{-\ell}(x,D)$ with $\ell > m$ and/or $s > m$, which have order $-s-\ell < -m-2$. Moreover, every term of order exactly $-m$ is obtained as a product of the form $b_{-s}(x,D) \circ b_{-\ell}(x,D)$ for $s+\ell=m$ and it decays like $\japx^{-\frac{2p-m+2}{p-1}\sigma}$. In conclusion we have obtained the assertion for $j=2$. \\ The inductive step follows from similar considerations. Assume now that the statement is true for $j\leq N$ and consider
	\begin{eqnarray}
		(-r(x,D))^{N+1} & = & (-r(x,D))\circ(-r(x,D))^{N} \nonumber \\
		& = & \mathbf{op}(-\partialxi D_x\lambda_{p-1}-\cdots\partialxi D_x\lambda_1+b_{-2}+\cdots+b_{-p})\circ\mathbf{op}(b_{-N}^{[N]}+\cdots+b_{-p}^{[N]}). \nonumber 
	\end{eqnarray}
	Notice that the only term in the composition with order exactly $-N-1$ w.r.t. $\xi$ is given by$(-\partialxi D_x\lambda_{p-1})(x,D)\circ b_{-N}^{[N]}(x,D)$ whose symbol depends only on $\lambda_{p-1}$ and belongs to the class $\mathbf{SG}_\mu^{-N-1,-\frac{(N+1)p-N-1}{p-1}\sigma}(\R^2)$. The only term with order exactly $-N-2$ w.r.t. $\xi$ is given by
	$$
	(-\partialxi D_x\lambda_{p-1})(x,D)\circ b_{-N-1}^{[N]}(x,D)+(-\partialxi D_x\lambda_{p-2})(x,D)\circ b_{-N}^{[N]}(x,D)
	$$
	whose symbol depends only on $\lambda_{p-1}$, $\lambda_{p-2}$ and belongs to $\mathbf{SG}_{\mu}^{-N-2,-\frac{(N+1)p-N-2}{p-1}}(\R^2)$, and so on.
\end{proof}

From Lemmas \ref{lemma_asymptotic_expansion_of_r}, \ref{lemma_structure_of_powers_of_r} the next result follows.

\begin{proposition}\label{proposition_inverse_of_e^lambda}
	Let $\mu>1$. For $h$ large enough, say $h> h_0$, the operator $e^\Lambda(x,D)$ is invertible and its inverse is given by
	\begin{equation}\label{equation_inverse_of_e_power_tilde_Lambda_in_a_precise_way}
		(e^\Lambda(x,D))^{-1}=\hskip2pt ^R (e^{-\Lambda}(x,D))\circ\mathbf{op}(1-i\partialxi \partialx\Lambda+q_{-2}+\cdots+q_{-(p-1)}+q_{-p}),
	\end{equation}
	where $q_{-m}\in\mathbf{SG}_\mu^{-m,-\frac{2p-m}{p-1}\sigma}(\R^2)$ for $m=2,...,p-1$, depend only on $\lambda_{p-1},...,\lambda_{p-(m-1)}$ and $q_{-p}$ is the sum of a symbol in $\mathbf{SG}_\mu^{-p, -\frac{p}{p-1}\sigma}(\R^2)$ and a symbol satisfying \eqref{aces_high} for all $C, c >0$ and $\kappa >2\mu-1$.
\end{proposition}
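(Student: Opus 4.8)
The plan is to deduce the statement from the factorization of the inverse established in Lemma~\ref{lemma_4_of_AA22}, namely
$$
(e^\Lambda(x,D))^{-1} = {}^R(e^{-\Lambda}(x,D)) \circ \sum_{j\geq 0}(-r(x,D))^j ,
$$
valid for $h$ large, by identifying in the symbol of $\sum_{j\geq 0}(-r(x,D))^j$ the components of orders $0,-1,\dots,-(p-1)$ with respect to $\xi$ and checking that the remaining part of order $\leq-p$ has the claimed structure. Recall from the last assertion of Lemma~\ref{lemma_4_of_AA22} that this operator has a symbol of the form $q+q_\infty$ with $q\in\mathbf{SG}^{0,0}_\mu(\R^2)$ and $q_\infty$ satisfying \eqref{aces_high}, so convergence of the series is not an issue and the task is one of bookkeeping by order. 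The term $j=0$ contributes $1$; the term $j=1$, by Lemma~\ref{lemma_asymptotic_expansion_of_r} and the identity $\partial_\xi D_x\Lambda=-i\partial_\xi\partial_x\Lambda$, equals $-r=-i\partial_\xi\partial_x\Lambda-b_{-2}-\cdots-b_{-(p-1)}-b_{-p}$; each $j\geq 2$ contributes the symbol $d_j=b^{[j]}_{-j}+\cdots+b^{[j]}_{-(p-1)}+b^{[j]}_{-p}$ of Lemma~\ref{lemma_structure_of_powers_of_r}. The block $-i\partial_\xi\partial_x\Lambda$ is kept as it stands in the statement.

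For fixed $m=2,\dots,p-1$, I would collect all contributions of order exactly $-m$ with respect to $\xi$: these are $-b_{-m}$ from $-r$ and $b^{[j]}_{-m}$ from $(-r(x,D))^j$ for $2\leq j\leq m$ (no $j>m$ contributes, since $d_j$ only contains orders $-j,\dots,-(p-1)$ and orders $\leq-p$), so I set $q_{-m}:=-b_{-m}+\sum_{j=2}^{m}b^{[j]}_{-m}$. That $q_{-m}$ depends only on $\lambda_{p-1},\dots,\lambda_{p-(m-1)}$ then follows from the corresponding statements, since $b^{[j]}_{-m}$ depends only on $\lambda_{p-1},\dots,\lambda_{p-m+j-1}$ and $p-m+j-1\leq p-(m-1)$ already for $j=2$. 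All remaining contributions of order $\leq-p$ with respect to $\xi$ are gathered into $q_{-p}$: besides $-b_{-p}$ and the finitely many $b^{[j]}_{-p}$ with $2\leq j\leq p-1$, this includes the full tail $\sum_{j\geq p}(-r(x,D))^j=(-r(x,D))^p\circ\sum_{j\geq 0}(-r(x,D))^j$, whose symbol is of order $\leq-p$ in $\xi$ since $(-r(x,D))^p$ is (Lemma~\ref{lemma_structure_of_powers_of_r} with $j=p$) and the second factor is of order $0$, as well as $q_\infty$. Stability of the two defining properties --- order $\leq-p$ in $\xi$, respectively estimate \eqref{aces_high} --- under finite sums and compositions then yields the announced structure of $q_{-p}$.

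The delicate point, where the argument goes beyond a rewriting of Lemmas~\ref{lemma_asymptotic_expansion_of_r} and \ref{lemma_structure_of_powers_of_r}, is the sharp decay in $x$. For the terms coming from the powers it is automatic: $\mathbf{SG}^{-m,-\frac{jp-m}{p-1}\sigma}_\mu\subset\mathbf{SG}^{-m,-\frac{2p-m}{p-1}\sigma}_\mu$ whenever $j\geq 2$ (as $jp-m\geq 2p-m$), $b^{[j]}_{-p}\in\mathbf{SG}^{-p,-\frac{p(j-1)}{p-1}\sigma}_\mu\subset\mathbf{SG}^{-p,-\frac{p}{p-1}\sigma}_\mu$, and the tail contributes a symbol in $\mathbf{SG}^{-p,-p\sigma}_\mu\subset\mathbf{SG}^{-p,-\frac{p}{p-1}\sigma}_\mu$ modulo \eqref{aces_high}. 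What requires work is to improve the estimate for $-b_{-m}$: Lemma~\ref{lemma_asymptotic_expansion_of_r} only records it in $\mathbf{SG}^{-m,-\frac{p-m+1}{p-1}\sigma}_\mu$, whereas $q_{-m}$ must lie in the smaller class $\mathbf{SG}^{-m,-\frac{2p-m}{p-1}\sigma}_\mu$. For this I would reopen the proof of Lemma~\ref{lemma_asymptotic_expansion_of_r}: the part of $r$ of order $-m$ in $\xi$ is produced by $\tfrac{1}{\gamma!}\partial_\xi^\gamma(e^\Lambda D_x^\gamma e^{-\Lambda})$ with $2\leq\gamma\leq m$; Fa\`a di Bruno writes each such term as a sum of constants times $\partial_\xi^\gamma$ of products $\prod_\ell D_x^{\gamma_\ell}\lambda_{p-k_\ell}$, and evaluating their orders via Lemma~\ref{estimatesforlambda}(v) one finds that a product of two or more factors already decays like $\langle x\rangle^{-\frac{2p-m}{p-1}\sigma}$ or faster, whereas a single-factor product $D_x^{\gamma}\lambda_{p-k}$, necessarily with $\gamma\geq 2$, carries the extra decay $\langle x\rangle^{-(\gamma-1)}$; since $\sigma<1$ we have $-(\gamma-1)\leq-1\leq-\sigma$, which is exactly enough to reach $-\frac{2p-m}{p-1}\sigma$. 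The same comparison, incorporating the remainder of the asymptotic expansion of $r$ (already of order $\leq-p$ in $\xi$, with $x$-decay $\langle x\rangle^{-p\sigma}$), fixes the sharp class of $-b_{-p}$. Once all orders $m=0,1,\dots,p-1$ and the order-$\leq-p$ remainder have been identified in this way, inserting the resulting symbol into the factorization of Lemma~\ref{lemma_4_of_AA22} gives \eqref{equation_inverse_of_e_power_tilde_Lambda_in_a_precise_way}; the invertibility of $e^\Lambda(x,D)$ for $h\geq h_0$ is already contained in Lemma~\ref{lemma_4_of_AA22}.
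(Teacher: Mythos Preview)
Your approach coincides with the paper's, which records only that the result ``follows from Lemmas~\ref{lemma_asymptotic_expansion_of_r} and~\ref{lemma_structure_of_powers_of_r}''; you go further by spelling out the bookkeeping and, in particular, by correctly isolating and resolving the sharpening of the $x$-decay of $b_{-m}$ needed to land $q_{-m}$ in $\mathbf{SG}_\mu^{-m,-\frac{2p-m}{p-1}\sigma}$ rather than the weaker class recorded in Lemma~\ref{lemma_asymptotic_expansion_of_r}. Two small corrections: in the dependence argument the inequality should read $p-m+j-1\geq p-(m-1)$ (a larger lowest subscript means a \emph{shorter} list of $\lambda$'s, so the $j=2$ case already gives the maximal dependence), and for $q_{-p}$ the ``same comparison'' does not directly cover contributions of $\xi$-order strictly below $-p$ (e.g.\ $\partial_\xi^2(D_x\lambda_1)^2$ when $p\geq 3$), whose naive $x$-decay is only $\langle x\rangle^{-\frac{2}{p-1}\sigma}$; here one must also use the support constraint $\langle x\rangle\leq\langle\xi\rangle_h^{p-1}$ built into $\psi$ to trade the surplus $\xi$-decay for the missing $\langle x\rangle^{-\frac{p-2}{p-1}\sigma}$.
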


\section{Conjugation of the operator $iP$}\label{conjugation}

In this section the goal is to perform the conjugation of the operator $iP$ by 
$$
\boxed{Q_{\Lambda,K,\rho^\prime}(t,x,D)=e^{\Lambda_{K,\rho^\prime}}(t,D)\circ e^\Lambda(x,D)},
$$ where
$$
\Lambda(x,\xi) = \sum_{k=1}^{p-1} \lambda_{p-k}(x,\xi) \qquad \text{and} \qquad \Lambda_{K,\rho^\prime}(t,\xi) = K(T-t)\japxih^{(p-1)(1-\sigma)} + \rho^\prime\japxih^{1/\theta}.
$$
Since the inverse of $e^\Lambda(x,D)$ is $\hskip2pt ^R(e^{-\Lambda}(x,D))\circ\sum_{j\geq0}(-r(x,D))^j$, it is necessary to work with compositions of the form $e^\Lambda(x,D)\circ p(x,D)\circ \hskip2pt ^R(e^{-\Lambda}(x,D))$, where $p$ is an operator with symbol of finite order. 

\subsection{Conjugation of $iP$ by $e^\Lambda$}

Before starting with the conjugation, let us note that by Lemma \ref{estimatesforlambda} the function $\Lambda$ satisfies
$$
|\doublepartial\Lambda(x,\xi)| \leq \left\lbrace\begin{array}{l}
	C_\Lambda^{\alpha+1}(\alpha!)^\mu\japxih^{(p-1)(1-\sigma)-\alpha}, \quad\beta =0, \\ 
	C_\Lambda^{\alpha+\beta+1}(\alpha!\beta!)^\mu\japxih^{-\alpha},\quad\quad\,\,\,\,\,\beta\geq1,
\end{array} \right.
$$
where $C_\Lambda$ is a constant depending on $M_{p-1},...,M_1,C_\omega,C_\psi,\mu,\sigma$. By the assumption $(p-1)(1-\sigma)<\frac{1}{\theta}$ it follows that
\begin{eqnarray}
	|\doublepartial\Lambda(x,\xi)| & \leq & C_\Lambda^{\alpha+\beta+1}(\alpha!\beta!)^\mu\japxih^{(p-1)(1-\sigma)-\alpha} \nonumber \\
	& = & C_\Lambda^{\alpha+\beta+1}(\alpha!\beta!)^\mu\japxih^{\frac{1}{\theta}-\alpha}\japxih^{(p-1)(1-\sigma)-\frac{1}{\theta}} \nonumber \\
	& \leq & C_\Lambda h^{(p-1)(1-\sigma)-\frac{1}{\theta}}C_\Lambda^{\alpha+\beta}(\alpha!\beta!)^\mu\japxih^{\frac{1}{\theta}-\alpha}, \nonumber
\end{eqnarray}
therefore
$$
\rho(\Lambda):=h^{(p-1)(1-\sigma)-\frac{1}{\theta}}C_\Lambda
$$
can be taken as small as we want, provided that $h$ can be taken suitably large; thus, for $h$ large enough, we are able to compute $e^\Lambda(x,D)\circ(iP)\circ(e^\Lambda(x,D))^{-1}$ by using Theorem \ref{theorem_conjugation}.

	We are now going to conjugate each term of the operator $iP$, which is given by
	\begin{equation}
		iP(t,x,D_t,D_x) = \partial_t + ia_p(t)D_x^p + \sum_{j=1}^p ia_{p-j}(t,x)D_x^{p-j}.
	\end{equation}
 The next items are dedicated to this purpose.

\begin{itemize}

	\item \textbf{Conjugation of $\partial_t$.} Since $\Lambda$ does not depend on $t$, the conjugation of $\partial_t$ is given by
	$$
	e^\Lambda(x,D) \circ \partial_t \circ (e^\Lambda(x,D))^{-1} = \partial_t.
	$$

	\item \textbf{Conjugation of $ia_p(t)D_x^p$.} First of all, we will treat only the conjugation of $iD_x^p$ by $e^\Lambda(x,D)$ (since $a_p$ does not depend on $x$). By Theorem \ref{theorem_conjugation} it follows that
		\begin{eqnarray}\label{conjugation_of_D^p}
			e^\Lambda(x,D) \circ iD_x^p \circ \hskip2pt ^R (e^{-\Lambda}(x,D))&=&iD_x^p+\mathbf{op}\left(i\sum_{1\leq\alpha\leq p-1}\frac{1}{\alpha!}\partialxi^\alpha\left(e^\Lambda \xi^p D_x^\alpha e^{-\Lambda}\right)\right)
\nonumber\\
&+&r_0(x,D)+r_\infty(x,D).
		\end{eqnarray}
	Note that we can write
	\begin{equation}\label{xi^p_conjecture}
		\sum_{1\leq\alpha\leq p-1} \frac{1}{\alpha!} \partialxi^\alpha \left(e^\Lambda \xi^p D_x^\alpha e^{-\Lambda}\right) = \partialxi \left(\xi^p D_x(-\Lambda)\right) + c_{-2} + c_{-3} + \cdots + c_{-(p-1)} + c_{-p},
	\end{equation}
	where $c_{-m} \in \mathbf{SG}_\mu^{p-m,-\frac{p-m+1}{p-1}\sigma}(\R^2)$ and it depends only on $\lambda_{p-1}, ..., \lambda_{p-m+1}$, for each $m=2,...,p$. Now we can substitute \eqref{xi^p_conjecture} in \eqref{conjugation_of_D^p} to obtain
		\begin{eqnarray}\label{conjugation_of_iD^p}
			e^\Lambda(x,D) \circ iD_x^p \circ \hskip2pt ^R (e^{-\Lambda}(x,D)) & = & iD_x^p + i\mathbf{op}(\partialxi\{\xi^p D_x(-\Lambda)\}) 
\nonumber\\
&+& \ds\sum_{j=2}^p ic_{-j}(x,D) +  r_0(x,D) + r_\infty(x,D).
		\end{eqnarray}
	 From Proposition \ref{proposition_inverse_of_e^lambda}, we have that the Neumann series is given by
	\begin{equation}\label{neumann_series}
		\sum_{j\geq0}(-r(x,D))^j=\mathbf{op}(1-i\partialxi\partialx\Lambda+q_{-2}+\cdots+q_{-(p-1)}+q_{-p}).
	\end{equation}
	Since we have to perform a composition between the two operators given in \eqref{conjugation_of_iD^p} and \eqref{neumann_series}, let us analyse what happens with the composition
	$$
	\left(iD_x^p + i\mathbf{op}(\partialxi\{\xi^p D_x(-\Lambda)\}) +  \ds\sum_{j=2}^p ic_{-j}(x,D)\right)\circ \mathbf{op}(1 - i\partialxi\partialx\Lambda + \ds\sum_{h=2}^p q_{-h} ).
	$$
	We can rewrite it as
	\begin{eqnarray}\label{the_composition_1}
		&& \left(iD_x^p+ i\mathbf{op}(\partialxi\{\xi^p D_x(-\Lambda)\}) +  \ds\sum_{j=2}^p ic_{-j}(x,D)\right)\circ \left(I + \mathbf{op}(\partialxi D_x\Lambda) + \ds\sum_{h=2}^p q_{-h} (x,D)\right)\nonumber \\
		 &&\hskip+1cm=  iD_x^p + i\mathbf{op}(\partialxi\{\xi^p D_x(-\Lambda)\})+\ds\sum_{j=2}^p ic_{-j}(x,D)+ iD_x^p \circ \mathbf{op}(\partialxi D_x\Lambda) \nonumber \\
		 &&\hskip+1cm + i\mathbf{op}(\partialxi\{\xi^p D_x(-\Lambda)\}) \circ \mathbf{op}(\partialxi D_x\Lambda) +  \ds\sum_{j=2}^p ic_{-j}(x,D) \circ (\partialxi D_x\Lambda)(x,D)+  iD_x^p \circ \ds\sum_{h=2}^p q_{-h} (x,D)\nonumber \\
		 &&\hskip+1cm+  i\mathbf{op}(\partialxi\{\xi^p D_x(-\Lambda)\}) \circ \ds\sum_{h=2}^p q_{-h} (x,D)+ \ds\sum_{j=2}^p ic_{-j}(x,D) \circ \ds\sum_{h=2}^p q_{-h}(x,D)  \nonumber \\
		&&\hskip+1cm=  iD_x^p + i\mathbf{op}\left(p\xi^{p-1}D_x(-\Lambda) + \cancel{\xi^p \partialxi D_x(-\Lambda)}\right) +\ds\sum_{j=2}^p ic_{-j}(x,D)+ \cancel{i\mathbf{op}(\xi^p\partialxi D_x \Lambda)}\nonumber \\
		 &&\hskip+1cm + i\mathbf{op}\left( \sum_{\gamma=1}^p \frac{1}{\gamma!} \partialxi^\gamma \xi^p \partialxi D_x^{\gamma+1} \Lambda \right) + i\mathbf{op}(\partialxi\{\xi^p D_x(-\Lambda)\}) \circ \mathbf{op}(\partialxi D_x\Lambda) \nonumber \\
		 &&\hskip+1cm+ \ds\sum_{j=2}^p ic_{-j}(x,D) \circ (\partialxi D_x\Lambda)(x,D) +  iD_x^p \circ \ds\sum_{h=2}^p q_{-h} (x,D)  \nonumber \\
		&&\hskip+1.3cm  + i\mathbf{op}(\partialxi\{\xi^p D_x(-\Lambda)\}) \circ \ds\sum_{h=2}^p q_{-h} (x,D)+ \ds\sum_{j=2}^p ic_{-j}(x,D) \circ \ds\sum_{h=2}^p q_{-h} (x,D). 
	\end{eqnarray}
	Next, let us make some remarks about order and dependence  on the constants $M_{p-j}$ of some terms.
	Let us first look at the term 
	$$
	i \mathbf{op} \left( \sum_{\gamma=1}^p \frac{1}{\gamma!} \partialxi^\gamma \xi^p \partialxi D_x^{\gamma+1} \Lambda \right).
	$$
	By definition of $\Lambda$, we have for each $\gamma \geq 1$ 
	\begin{equation}\label{t1}
	\partialxi^\gamma \xi^p D_x^{\gamma+1} \partialxi \Lambda = \ds\sum_{k=1}^{p-1}\partialxi^\gamma \xi^p D_x^{\gamma+1} \partialxi \lambda_{p-k}
	\end{equation}
	For each $k = 1, ..., p-1$, we obtain that $\partialxi^\gamma \xi^p D_x^{\gamma+1} \partialxi \lambda_{p-k}$ has order 
	$$
	p-k-\gamma \leq p-k-1\ \text{w.r.t.}\ \xi
	$$
	and
	$$
	- \frac{p-k}{p-1} \sigma - \gamma \leq - \frac{p-k}{p-1} \sigma - 1\ \text{w.r.t.}\ x.
	$$
	Notice that the highest order with respect to $\xi$ in \eqref{t1} is $p-2$.
	
	\noindent The term $i \mathbf{op}( \partialxi \{ \xi^p D_x(-\Lambda) \} ) \circ \mathbf{op}( \partialxi D_x \Lambda )$ can be written, by definition of $\Lambda$, as
	\begin{equation}\label{t2}
	i \mathbf{op}( \partialxi \{ \xi^p D_x(-\Lambda) \} ) \circ \mathbf{op}( \partialxi D_x \Lambda ) = -i \sum_{\ell=1}^{p-1} \sum_{s=1}^{p-1} \mathbf{op}( \partialxi \{ \xi^p D_x \lambda_{p-\ell} \} ) \circ \mathbf{op}( \partialxi D_x \lambda_{p-s} ).
	\end{equation}
	Note that, for $\ell = 1, ..., p-1$, the symbol $\partialxi \{ \xi^p D_x \lambda_{p-\ell} \}$ has order $p-\ell$ w.r.t. $\xi$ and $-\frac{p-\ell}{p-1} \sigma$ w.r.t. $x$ and, for $s = 1, ..., p-1$, the symbol $\partialxi D_x \lambda_{p-s}$ has order $-s$ w.r.t. $\xi$ and $-\frac{p-s}{p-1} \sigma$ w.r.t. $x$. Hence, the operator $\mathbf{op}(\partialxi\{\xi^p D_x \lambda_{p-\ell}\}) \circ \mathbf{op}(\partialxi D_x \lambda_{p-s})$ has order
	$$
	p-(\ell+s)\ \text{w.r.t.}\ \xi
	$$
	and
	$$
	-\frac{2p-(\ell+s)}{p-1} \sigma \ \text{w.r.t.}\ x,
	$$
	for each $\ell=1,...,p-1$ and $s=1,...,p-1$. If we fix $j=1,...,p-1$ and we consider the terms of order $p-j$ with respect to $\xi$ in \eqref{t2} (that is, the terms with $\ell+s=j$) we see that they cannot depend on $\lambda_{p-k}$ with $k\geq j$, because otherwise the order would be strictly less than $p-j$. With respect to $x$, the order is
	$$
	- \frac{2p-j}{p-1} \sigma \leq - \frac{p-j+1}{p-1} \sigma,
	$$
	because $2p - j \geq p - j + 1$, for $p > 1$. Again, the highest order with respect to $\xi$ in \eqref{t2} is $p-2$, since $\ell$ and $s$ run through the set $\{1, ..., p-1\}$. \\
	 The symbol of the term $iD_x^p \circ q_{-h}(x,D)$, $h=2,\ldots, p-1$, behaves like $\japxi^{p-h}\japx^{-\frac{2p-h}{p-1}\sigma} \leq \japxi^{p-h} \japx^{-\frac{p-h}{p-1}\sigma}$ and depends only on $\lambda_{p-1}, \lambda_{p-2}, \ldots, \lambda_{p-(h-1)}$.
	
	Concerning the term
	$$
	i\mathbf{op}(\partialxi\{\xi^p D_x(-\Lambda)\}) \circ [q_{-2}(x,D)+\cdots+q_{-p}(x,D)],
	$$
	we see that the first term in this composition can be rewritten as
	$$
	-i\mathbf{op}(\partialxi\{\xi^p D_x(\lambda_{p-1}+\cdots+\lambda_{1})\} = -i\mathbf{op}(\partialxi\{\xi^p D_x\lambda_{p-1}\}+\cdots+\partialxi\{\xi^p D_x\lambda_1\}).
	$$
	Hence the composition is a sum of terms with symbols of the form $\partialxi\{\xi^p D_x \lambda_{p-k}\}\cdot q_{-\ell}$, for $k=1, ..., p-1$ and $\ell = 2, ..., p$, whose order is $p-(k+\ell)$ depending only on $\lambda_{p-1}, ..., \lambda_{p-k-\ell+1}$. Therefore, the highest order here is $p-3$.

	The other terms are all of negative order, then there is no need to make the same analysis as above. Summarizing: the conjugation $e^\Lambda(x,D) \circ (ia_p(t)D_x^p) \circ (e^\Lambda(x,D))^{-1}$ can be expressed as
	\begin{eqnarray}\label{leading_term_conjugation}
		e^\Lambda(x,D) \circ (ia_p(t)D_x^p) \circ (e^\Lambda(x,D))^{-1} & = & ia_p(t)D_x^p - \mathbf{op}\left(\partialxi a_p(t)\xi^p \partialx\Lambda + a_p(t)\sum_{m=2}^{p-1} \mathtt{c}_{p-m}\right) \nonumber \\
		& + & r_0(x,D) + r_\infty(x,D),
	\end{eqnarray}
		with $r_0$ of order zero, $r_\infty$ satisfying an estimate of the form \eqref{regularizingestimate}, $\mathtt{c}_{p-m}$ has order $p-m$ with respect to $\xi$ and $-\frac{p-m+1}{p-1}\sigma$ with respect to $x$ and depends only on $\lambda_{p-1}, ..., \lambda_{p-m+1}$, for each $m=2, ..., p-1$. 
	
	\item \textbf{Conjugation of $ia_{p-j}(t,x)D_x^{p-j}, j=1,...,p-1$.} For some $N \in \Bbb{N}$ to be chosen later, by Theorem \ref{theorem_conjugation} it follows that
	\begin{small}
		\begin{eqnarray}\label{conjugation_of_a_(p-j)D^(p-j)}
			& & e^\Lambda(x,D) \circ a_{p-j}(t,x) D_x^{p-j} \circ \hskip2pt ^R (e^{-\Lambda}(x,D))  \nonumber \\
			& = & a_{p-j}(t,x) D_x^{p-j}  +  \mathbf{op} \left(\sum_{1 \leq \alpha + \beta \leq N} \frac{1}{\alpha! \beta!} \partialxi^\alpha \left(\partialxi^\beta e^\Lambda \cdot D_x^\beta a_{p-j}(t,x) \xi^{p-j} \cdot D_x^\alpha e^{-\Lambda}\right) \right) \nonumber \\
			& + & r_0(x,D) + r_\infty(x,D).
		\end{eqnarray}
	\end{small}
	For the next computations, we omit $(x,\xi)$ and $(x,D)$ in the notation for simplicity. Our goal here is to analyse what happens with the terms in
	\begin{small}
		\begin{equation}\label{terms_sum_conjugation_of_a_(p-1)D^(p-1)}
			\sum_{1 \leq \alpha + \beta \leq N} \frac{1}{\alpha! \beta!} \partialxi^\alpha \left(\partialxi^\beta e^\Lambda \cdot D_x^\beta a_{p-j} \xi^{p-j} \cdot D_x^\alpha e^{-\Lambda}\right).
		\end{equation}
	\end{small}
	By Fa\`a di Bruno formula, the terms $\partialxi^\beta e^\Lambda$ and $D_x^\alpha e^{-\Lambda}$ become
	$$
	\partialxi^\beta e^\Lambda = \sum_{\mathtt{j}=1}^\beta \frac{1}{\mathtt{j}!} e^{\Lambda} \sum_{\stackrel{\beta_1 + \cdots + \beta_\mathtt{j} = \beta}{\beta_\mathtt{k} \geq 1}} \frac{\beta!}{\beta_1! \cdots \beta_\mathtt{j}!} \prod_{\mathtt{k} = 1}^\mathtt{j} \partialxi^{\beta_\mathtt{k}} \Lambda 
	$$
	and
	$$
	D_x^\alpha e^{-\Lambda} = \sum_{\ell = 1}^\alpha \frac{(-1)^\ell}{\ell!} e^{-\Lambda} \sum_{\stackrel{\alpha_1 + \cdots + \alpha_\ell = \alpha}{\alpha_\mathtt{m} \geq 1}} \frac{\alpha!}{\alpha_1! \cdots \alpha_\ell!} \prod_{\mathtt{m} = 1}^\ell D_x^{\alpha_\mathtt{m}} \Lambda.
	$$
	By fixing $\alpha + \beta$, the general term in \eqref{terms_sum_conjugation_of_a_(p-1)D^(p-1)} will be a sum of terms which will be of the form
	\begin{equation}\label{I_alphabeta}
		I_{(\alpha,\beta)}^{[j]} := \partialxi^\alpha\left(D_x^\beta a_{p-j} \xi^{p-j} \cdot \partialxi^{\beta_1}\lambda_{p-k_1} \cdots \partialxi^{\beta_\mathtt{r}}\lambda_{p-k_\mathtt{r}} \cdot D_x^{\alpha_1}\lambda_{p-\tilde{k}_1} \cdots D_x^{\alpha_\mathtt{s}}\lambda_{p-\tilde{k}_\mathtt{s}}\right),
	\end{equation}
	where
	$$
	k_1, ..., k_\mathtt{r}, \tilde{k}_1, ..., \tilde{k}_\mathtt{s} \in \{1, ..., p-1\} \qquad \text{and} \qquad 1 \leq \mathtt{r} \leq \beta, \quad 1 \leq \mathtt{s} \leq \alpha. 
	$$
	Now, we want to show that if $\lambda_{p-k}$ appears in $I_{(\alpha,\beta)}^{[j]}$ with $k \geq m$, then $I_{(\alpha,\beta)}^{[j]}$ is of order at most $p - m -1$ with respect to $\xi$.

	\textbf{Case 1}: Let suppose that in \eqref{I_alphabeta} there is a term of the type $D_x^{\alpha_1}\lambda_{p-\tilde{k}_1}$ with $\tilde{k}_1 \geq m$ and $m\geq2$.

	By using (v) of Lemma \ref{estimatesforlambda} and the fact that $\tilde{k}_1 \geq m$ and each $\tilde{k}_\mathtt{a} \leq p-1$, for all $1 \leq \mathtt{a} \leq \mathtt{s}$, it follows that
	\begin{equation}\label{case_1_equation_1}
		\left| \left(D_x^{\alpha_1}\lambda_{p-\tilde{k}_1} \cdots D_x^{\alpha_\mathtt{s}}\lambda_{p-\tilde{k}_\mathtt{s}}\right) (x,\xi) \right| \leq C^{\alpha+1} (\alpha!)^\mu \japxi^{-m+1} \japx ^{-\alpha+\mathtt{s}-\frac{\mathtt{s}\sigma}{p-1}}.
	\end{equation}
	Now, from (iv) of Lemma \ref{estimatesforlambda} and the fact that $k_\mathtt{a}\geq1$, for all $1 \leq \mathtt{a} \leq \mathtt{r}$, we obtain the estimate
	\begin{equation}\label{case_1_equation_2}
		\left| \left( \partialxi^{\beta_1}\lambda_{p-k_1} \cdots \partialxi^{\beta_\mathtt{r}}\lambda_{p-k_\mathtt{r}} \right)(x,\xi) \right| \leq C^{\beta+1} (\beta!)^\mu \japxi^{-\beta} \japx^{\mathtt{r}(1-\sigma)}.
	\end{equation}
	By hypothesis, we have that
	\begin{equation}\label{case_1_equation_3}
		\left| D_x^\beta a_{p-j}(t,x)\xi^{p-j} \right| \leq C^{\beta+1} (\beta!)^{\theta_0} \japxi^{p-j} \japx^{-\frac{p-j}{p-1}\sigma-\beta}.
	\end{equation}
	From the definition \eqref{I_alphabeta} of $ I_{(\alpha,\beta)}^{[j]}$, \eqref{case_1_equation_1}, \eqref{case_1_equation_2} and \eqref{case_1_equation_3} we get
	\begin{equation}\label{case_1_equation_4}
	| I_{(\alpha,\beta)}^{[j]}(x,\xi) | \leq C_{\alpha,\beta} \japxi^{-m+1-\beta+p-j-\alpha} \japx^{-\alpha+\mathtt{s}-\frac{\mathtt{s}\sigma}{p-1}+\mathtt{r}(1-\sigma)-\frac{p-j}{p-1}\sigma-\beta}.
	\end{equation}
	The exponent of $\japxi$ is $p-m-j+1-\beta-\alpha\leq p-j-m$ since $\alpha+\beta\geq 1$. Regarding the exponent of $\japx$, note that
	$$
	-\alpha+\mathtt{s}\underbrace{-\frac{\mathtt{s}\sigma}{p-1}}_{<0}+\underbrace{\mathtt{r}(1-\sigma)}_{<\mathtt{r}}-\frac{p-j}{p-1}\sigma-\beta \leq -\frac{p-j}{p-1}\sigma+\underbrace{(\mathtt{s}-\alpha)}_{\mathtt{s} \leq \alpha}+\underbrace{(\mathtt{r}-\beta)}_{\mathtt{r} \leq \beta} \leq -\frac{p-j}{p-1}\sigma.
	$$
	Hence
	\begin{equation}\label{case_1_equation_5}
	| I_{(\alpha,\beta)}^{[j]}(x,\xi) | \leq C_{\alpha,\beta} \japxi^{p-m-j+1-\alpha-\beta} \japx^{-\frac{p-j}{p-1}\sigma},
	\end{equation}
	which implies that $I_{(\alpha,\beta)}^{[j]} \in \mathbf{SG}_\mu^{p-m-j+1-\alpha-\beta,-\frac{p-j}{p-1}\sigma}(\R^2)$.

	\textbf{Case 2}: Assume that $\partialxi^{\beta_1}\lambda_{p-k_1}$, with $k_1 \geq m$, appears in $I^{[j]}_{(\alpha,\beta)}$.

	Here we need to present more details in the computations, because this case is a little bit trickier than the previous one. By using estimates (iii) and (iv) of Lemma \ref{estimatesforlambda} and $k_1 \geq m$,
	\begin{eqnarray}\label{case_2_equation_1}
		& & \left| \left( \partialxi^{\beta_1}\lambda_{p-k_1} \cdots \partialxi^{\beta_\mathtt{r}}\lambda_{p-k_\mathtt{r}} \right)(x,\xi) \right| \nonumber \\
		& \leq & C^{\beta+1} (\beta!)^\mu \underbrace{\japxi^{1-k_1-\beta_1} \japx^{1-\frac{p-k_1}{p-1}\sigma}}_{\text{from (iii)}} \underbrace{\japxi^{-\beta_2} \japx^{\frac{p-k_2}{p-1}(1-\sigma)} \cdots \japxi^{-\beta_\mathtt{r}} \japx^{\frac{p-k_\mathtt{r}}{p-1}(1-\sigma)}}_{\text{from (iv)}} \nonumber \\
		& \leq & C^{\beta+1} (\beta!)^\mu \japxi^{1-m-\beta} \japx^{1-\frac{p-k_1}{p-1}\sigma+\frac{p-k_2}{p-1}(1-\sigma)+\cdots+\frac{p-k_\mathtt{r}}{p-1}(1-\sigma)}.
	\end{eqnarray}
	By using (v) of Lemma \ref{estimatesforlambda} and $1\leq \tilde{k}_\mathtt{a} \leq p-1$, for each $1 \leq \mathtt{a} \leq \mathtt{s}$, we obtain
	\begin{eqnarray}\label{case_2_equation_3}
		& & \left| \left( D_x^{\alpha_1}\lambda_{p-\tilde{k}_1} \cdots D_x^{\alpha_\mathtt{s}}\lambda_{p-\tilde{k}_\mathtt{s}} \right)(x,\xi) \right| \nonumber \\
		& \leq & C^{\alpha+1} (\alpha!)^\mu \japxi^{1-\tilde{k}_1+1-\tilde{k}_2+\cdots+1-\tilde{k}_\mathtt{s}} \japx^{-\frac{p-\tilde{k}_1}{p-1}\sigma-\alpha_1+1-\cdots-\frac{p-\tilde{k}_\mathtt{s}}{p-1}\sigma-\alpha_\mathtt{s}+1} \nonumber \\
		& = & C^{\alpha+1} (\alpha!)^\mu \japxi^{\mathtt{s}\overbrace{-\tilde{k}_1-\cdots-\tilde{k}_\mathtt{s}}^{\text{all} \ \tilde{k}_\mathtt{a} \geq 1}} \japx^{-\alpha+\mathtt{s}-\frac{\mathtt{s}p\overbrace{-\tilde{k}_1-\cdots-\tilde{k}_\mathtt{s}}^{\text{all} \ \tilde{k}_\mathtt{a} \leq p-1}}{p-1}\sigma} \nonumber \\
		& \leq & C^{\alpha+1} (\alpha!)^\mu \japxi^{\mathtt{s}-\mathtt{s}} \japx^{-\alpha+\mathtt{s}\overbrace{-\frac{\mathtt{s}p-\mathtt{s}(p-1)}{p-1}\sigma}^{\leq 0}} \nonumber \\
		& \leq & C^{\alpha+1} (\alpha!)^\mu \japx^{-\alpha+\mathtt{s}}.
	\end{eqnarray}
	It follows from \eqref{case_1_equation_3}, \eqref{case_2_equation_1} and \eqref{case_2_equation_3} that
	$$ | I_{(\alpha,\beta)}^{[j]}(x,\xi) |  \leq  C_{\alpha\beta} \japxi^{1-m-\beta+p-j-\alpha} \japx^{1-\frac{p-k_1}{p-1}\sigma+\frac{p-k_2}{p-1}(1-\sigma)+\cdots+\frac{p-k_\mathtt{r}}{p-1}(1-\sigma)-\beta-\frac{p-j}{p-1}\sigma-\alpha+\mathtt{s}}.$$
	 
	Now we perform some computations on the exponent of $\japx$. Since $k_\mathtt{a} \geq 1$, for each $2 \leq \mathtt{a} \leq \mathtt{r}$, we obtain
	\begin{eqnarray}
		& & 1-\frac{p-k_1}{p-1}\sigma+\frac{p-k_2}{p-1}(1-\sigma)+\cdots+\frac{p-k_\mathtt{r}}{p-1}(1-\sigma)-\beta-\frac{p-j}{p-1}\sigma-\alpha+\mathtt{s} \nonumber \\
		& \leq & 1-\frac{p-k_1}{p-1}\sigma+\frac{p(\mathtt{r}-1)-(\mathtt{r}-1)}{p-1}(1-\sigma)-\alpha-\beta-\frac{p-j}{p-1}\sigma+\mathtt{s} \nonumber \\
		& = & 1-\frac{p-k_1}{p-1}\sigma+(\mathtt{r}-1)(1-\sigma)-\beta-\frac{p-j}{p-1}\sigma\underbrace{+\mathtt{s}-\alpha}_{\leq 0} \nonumber \\
		& \leq & 1-\frac{p-k_1}{p-1}\sigma+\underbrace{(\mathtt{r}-1)(1-\sigma)}_{\leq \mathtt{r}-1}-\beta-\frac{p-j}{p-1}\sigma \nonumber \\
		& \leq & \underbrace{1-\frac{p-k_1}{p-1}\sigma}_{\leq 1}+\mathtt{r}-1-\beta-\frac{p-j}{p-1}\sigma \nonumber \\
		& \leq & -\frac{p-j}{p-1}\sigma +1-1\underbrace{-\beta+\mathtt{r}}_{\leq 0} \leq -\frac{p-j}{p-1}\sigma. \nonumber
	\end{eqnarray}
	Hence
	$$
	| I_{(\alpha,\beta)}^{[j]}(x,\xi) | \leq C_{\alpha\beta} \japxi^{p-m-j+1-\alpha-\beta} \japx^{-\frac{p-j}{p-1}\sigma},
	$$
	which is the desired estimate.

	From \eqref{conjugation_of_a_(p-j)D^(p-j)} and the above discussion it follows that
	\beqsn
	e^\Lambda(x,D) \circ (i a_{p-j}(t,x)D_x^{p-j}) \circ \hskip2pt ^R (e^{-\Lambda}(x,D))& =& i a_{p-j}(t,x)D_x^{p-j} + \mathbf{op}\left(\sum_{m=2}^{p-j} c_{p-m}^{[j]}\right) 
\\
&+& r_0(t,x,D) + r_\infty(t,x,D),
	\eeqsn
	where $r_0$ has order zero, $r_\infty$ satisfies an estimate of the form \eqref{regularizingestimate} and $c_{p-m}^{[j]}$ has order $p-j-m+1$ with respect to $\xi$, $-\frac{p-j}{p-1}\sigma$ with respect to $x$ and depends only on $\lambda_{p-1}, ..., \lambda_{p-m+1}$. By composing with the Neumann series and using similar arguments as in the conjugation of the leading term, the previous structure of the composition does not change, which means that
		\beqs\label{lower_order_conjugation}
			e^\Lambda(x,D) \circ (i a_{p-j}(t,x)D_x^{p-j}) \circ (e^\Lambda(x,D))^{-1}& =& i a_{p-j}(t,x)D_x^{p-j} + \mathbf{op}\left(\sum_{m=2}^{p-1} d_{p-m}^{[j]}\right) 
\\
&+& r_0(t,x,D) + r_\infty(t,x,D),\nonumber
		\eeqs
	
	with $d_{p-m}^{[j]}$ of order $p-j-m+1$ with respect to $\xi$ and $-\frac{p-j}{p-1}\sigma$ with respect to $x$, depending only on $\lambda_{p-1}, ..., \lambda_{p-m+1}$, $r_0$ and $r_\infty$ new terms with the same properties as before.

	\item The conjugation of $ia_0(t,x)$: For this term, the conjugation is given by
	\begin{equation}\label{conjugation_of_a_(0)}
		e^\Lambda(x,D) \circ (i a_0(t,x)) \circ (e^\Lambda(x,D))^{-1} = r_0(t,x,D) + r_\infty(t,x,D), 
	\end{equation}
	where $r_0$ has order zero and $r_\infty$ satisfies an estimate of the form \eqref{regularizingestimate}.

\end{itemize}

Finally, let us gather all the previous computations. We obtain
\begin{eqnarray}\label{conjugation_of_iP_provisory}
	e^\Lambda (x,D)\circ (iP) \circ (e^\Lambda(x,D))^{-1} 
	& = & \partial_t + ia_p(t)D_x^p - \mathbf{op}\left(\partialxi a_p(t) \xi^p \partialx\Lambda\right) + \mathbf{op}\left(a_p(t) \sum_{m=2}^{p-1} \mathtt{c}_{p-m}\right) \nonumber \\
	& + & \sum_{j=1}^{p-1} ia_{p-j}(t,x)D_x^{p-j} + \mathbf{op}\left(\sum_{j=1}^{p-1}\sum_{m=2}^{p-j} d_{p-m}^{[j]}\right) + (r_0+r_\infty)(t,x,D).
\end{eqnarray}
Now we define
$$
d_{p-j}(t,x,\xi) := a_p(t)\mathtt{c}_{p-j}(x,\xi) + \sum_{\mathtt{h}+m=j+1} d_{p-m}^{[\mathtt{h}]}(x,\xi), \qquad j=2,...,p-1,
$$
which has $\xi$-order $p-j$, $x$-order $-\frac{p-j}{p-1}\sigma$ and depends only on $\lambda_{p-1},...,\lambda_{p-j+1}$. Hence we can rewrite \eqref{conjugation_of_iP_provisory} as
\begin{eqnarray}\label{conjugation_of_iP}
	e^\Lambda(x,D) \circ (iP) \circ (e^\Lambda(x,D))^{-1} & = & \partial_t + ia_p(t)D_x^p + \sum_{j=1}^{p-1} ia_{p-j}(t,x)D_x^{p-j} \nonumber \\
	& - & \mathbf{op}(\partialxi a_p(t) \xi^p \partialx \lambda_{p-1}) - \cdots - \mathbf{op}(\partialxi a_p(t) \xi^p \partialx \lambda_1) \nonumber \\
	& + & \mathbf{op}\left(\sum_{j=2}^{p-1} d_{p-j}\right) + (r_0+r_\infty)(t,x,D),
\end{eqnarray}
where $r_0$ is a zero order term, $r_\infty$ satisfies an estimate of the form \eqref{regularizingestimate} and $d_{p-j}$ has $\xi$-order $p-j$, $x$-order $-\frac{p-j}{p-1}\sigma$ and depends only on $\lambda_{p-1},...,\lambda_{p-j+1}$.

In order to perform the next conjugation, let us rewrite \eqref{conjugation_of_iP} in the following way: 
\begin{eqnarray}\label{conjugation_of_iP_improved}
	e^\Lambda(x,D) \circ (iP) \circ (e^\Lambda(x,D))^{-1} & = & \partial_t + a_p(t)D_x^p \nonumber \\
	& + & ia_{p-1}(t,x)D_x^{p-1} - \mathbf{op}(\partialxi a_p(t)\xi^p \partialx\lambda_{p-1}) \nonumber \\
	& + & ia_{p-2}(t,x)D_x^{p-2} - \mathbf{op}(\partialxi a_p(t)\xi^p \partialx\lambda_{p-2}) + \mathbf{op}(d_{p-2}) \nonumber \\
	& + & \cdots \nonumber \\
	& + & ia_2(t,x)D_x^2 - \mathbf{op}(\partialxi a_p(t)\xi^p \partialx\lambda_2) + \mathbf{op}(d_2) \nonumber \\
	& + & ia_1(t,x)D_x - \mathbf{op}(\partialxi a_p(t)\xi^p \partialx\lambda_1) + \mathbf{op}(d_1) \nonumber \\
	& + & (r_0 + r_\infty)(t,x,D),
\end{eqnarray}
where $d_{p-j}$, $j=2,...,p-1$, $r_0$ and $r_\infty$ are as described previously.

\subsection{Conjugation of $e^\Lambda(iP)(e^\Lambda)^{-1}$ by $e^{\Lambda_{K,\rho^\prime}}$}

For some $K > 0$ and $\rho^\prime < \rho$, we consider the operator $e^{\Lambda_{K,\rho^\prime}}(t,D)$, where
$$
\Lambda_{K,\rho^\prime}(t,\xi) := K(T-t)\japxih^{(p-1)(1-\sigma)} + \rho^\prime\japxih^{1/\theta}.
$$
is a symbol of order $1/\theta$, since $(p-1)(1-\sigma) < 1/\theta$ by assumption.\\

In order to apply again Theorem \ref{theorem_conjugation} we observe that
$$|\Lambda_{K,\rho^\prime}(t,\xi)|\leq \left(K Th^{(p-1)(1-\sigma)-1/\theta} + \rho^\prime\right)\japxih^{1/\theta}$$
fulfills \eqref{firstasslambda} (for $h$ large enough) for every $\rho_0>\rho'$. So, 
enlarging $h_0$ such that $h \geq h_0$ and taking $\rho'$ small enough, Theorem \ref{theorem_conjugation} applies. 
\begin{itemize}
	
	\item {\bf Conjugation of $\partial_t$:} for this term we get
	$$
	e^{\Lambda_{K,\rho^\prime}}(t,D) \circ \partial_t \circ e^{-\Lambda_{K,\rho^\prime}}(t,D) = \partial_t + K\langle D_x \rangle_h^{(p-1)(1-\sigma)}.
	$$

	\item {\bf Conjugation of $ia_p(t)D_x^p$:} since $a_p(t)$ does not depend on $x$, the conjugation is simply given by
	$$
	e^{\Lambda_{K,\rho^\prime}}(t,D) \circ (ia_{p}(t)D_x^p) \circ e^{-\Lambda_{K,\rho^\prime}}(t,D) = ia_p(t)D_x^p.
	$$

	\item {\bf Conjugation of $ia_{p-1}(t,x)D_x^{p-1} - \mathbf{op}(\partialxi a_p(t) \xi^p \partialx\lambda_{p-1})$:} we have that
	\begin{eqnarray}
		& & e^{\Lambda_{K,\rho^\prime}}(t,D) \circ \left(ia_{p-1}(t,x)D_x^{p-1} - \mathbf{op}(\partialxi a_p(t) \xi^p \partialx\lambda_{p-1})\right) \circ e^{-\Lambda_{K,\rho^\prime}}(t,D) \nonumber \\
		& = & ia_{p-1}(t,x)D_x^{p-1} - \mathbf{op}(\partialxi a_p(t) \xi^p \partialx\lambda_{p-1}) + A_{K,\rho^\prime}^{[p-1]}(t,x,D), \nonumber
	\end{eqnarray}
	with $A_{K,\rho^\prime}^{[p-1]}(t,x,D)$ a remainder term whose symbol satisfies 
	\begin{equation}\label{remainder_p-1}
		\left| \doublepartial A_{K,\rho^\prime}^{[p-1]}(t,x,\xi) \right| \leq C_{T,K,\rho^\prime,M_1}\japxih^{p-1-\left(1-\frac1{\theta}\right)} \japx^{-\sigma-1},
	\end{equation}
	for all $\alpha,\beta \in \Bbb{N}_0$, $t \in [0,T]$ and $x,\xi \in \Bbb{R}$.

	\item {\bf Conjugation of $ia_{p-j}(t,x)D_x^{p-j} - \mathbf{op}(\partialxi a_p(t) \xi^p \partialx\lambda_{p-j}) + \mathbf{op}(d_{p-j})$, for $j=2,...,p-1$:} for every $j$, we have
	\begin{eqnarray}
		& & e^{\Lambda_{K,\rho^\prime}}(t,D) \circ \left(ia_{p-j}(t,x)D_x^{p-j} - \mathbf{op}(\partialxi a_p(t) \xi^p \partialx\lambda_{p-j}) + \mathbf{op}(d_{p-j})\right) \circ e^{-\Lambda_{K,\rho^\prime}}(t,D) \nonumber \\
		& = & ia_{p-j}(t,x)D_x^{p-j} - \mathbf{op}(\partialxi a_p(t) \xi^p \partialx\lambda_{p-j}) + \mathbf{op}(d_{p-j}) + A_{K,\rho^\prime}^{[p-j]}(t,x,D), \nonumber
	\end{eqnarray}
	with $A_{K,\rho^\prime}^{[p-j]}(t,x,D)$ a remainder term whose symbol satisfies
	\begin{equation}\label{remander_p-j}
		\left| \doublepartial A_{K,\rho^\prime}^{[p-j]}(t,x,\xi) \right| \leq C_{T,K,\rho^\prime,M_{p-1},...,M_{p-j}} \japxih^{p-j-\left(1-\frac{1}{\theta}\right)} \japx^{-\frac{p-j}{p-1}\sigma},
	\end{equation}
	for all $\alpha,\beta \in \Bbb{N}_0$, $t \in [0,T]$ and $x,\xi \in \R$. 
	
\end{itemize}

Gathering all these computations, we get
\begin{eqnarray}\label{conjugation_of_e_iP_e}
	 Q_{\Lambda,K,\rho^\prime}(t,x,D) &\circ& (iP) \circ (Q_{\Lambda,K,\rho^\prime}(t,x,D))^{-1} \nonumber \\ 
	& = & e^{\Lambda_{K,\rho^\prime}}(t,D) \circ e^\Lambda(x,D) \circ (iP) \circ (e^\Lambda(x,D))^{-1} \circ e^{-\Lambda_{K,\rho^\prime}}(t,D)  \nonumber
\\
&=& \partial_t + a_p(t)D_x^p 
	 +  ia_{p-1}(t,x)D_x^{p-1} - \mathbf{op}(\partialxi a_p(t) \xi^p \partialx\lambda_{p-1}) + A_{K,\rho^\prime}^{[p-1]}(t,x,D) \nonumber \\
	& + &  ia_{p-2}(t,x)D_x^{p-2} - \mathbf{op}(\partialxi a_p(t) \xi^p \partialx\lambda_{p-2}) + \mathbf{op}(d_{p-2}) + A_{K,\rho^\prime}^{[p-2]}(t,x,D) \nonumber \\
	& + & \cdots \nonumber \\
	& + & ia_2(t,x)D_x^2 - \mathbf{op}(\partialxi a_p(t) \xi^p \partialx\lambda_2) + \mathbf{op}(d_2) + A_{K,\rho^\prime}^{[2]}(t,x,D) \nonumber \\
	& + & ia_1(t,x)D_x - \mathbf{op}(\partialxi a_p(t) \xi^p \partialx\lambda_1) + \mathbf{op}(d_1) + A_{K,\rho^\prime}^{[1]}(t,x,D) \nonumber \\
	& + & K \langle D_x \rangle_h^{(p-1)(1-\sigma)} + (r_0+r_\infty)(t,x,D),
\end{eqnarray}
where $A_{K,\rho^\prime}^{[p-j]}$ is a remainder depending on $M_{p-1},...,M_{p-j}$ satisfying \eqref{remainder_p-1} for $j=1$ and \eqref{remander_p-j} for $j=1,...,p-1$.

\section{Estimates from below for the real parts}\label{estimatesfrombelow}

This section is devoted to prove some estimates from below for the real parts of the lower order terms of
$$
(iP)_{\Lambda,K,\rho^\prime} := Q_{\Lambda,K,\rho^\prime} \circ iP \circ (Q_{\Lambda,K,\rho^\prime})^{-1},
$$
with the aim to apply to these terms the sharp G{\aa}rding inequality and finally to achieve a well-posedness result for the Cauchy problem \eqref{auxiliary_cauchy_problem_gevrey}.  To this purpose, let us rewrite \eqref{conjugation_of_e_iP_e} in a different way.
For each $j=1,...,p-1$, by the definition of $\lambda_{p-j}$ it follows that
$$
-\partialxi\xi^p\partialx\lambda_{p-j}(x,\xi) = -p\xi^{p-1} M_{p-j} \omega\left(\frac{\xi}{h}\right) \japxih^{1-j} \japx^{-\frac{p-j}{p-1}\sigma} \psi\left(\frac{\japx}{\japxih^{p-1}}\right).
$$
By definition \eqref{z} of $\omega$, for $|\xi|>2h$ we can rewrite the above expression as
\begin{eqnarray}
	-\partialxi\xi^p\partialx\lambda_{p-j}(x,\xi) & = & p|\xi|^{p-1} M_{p-j} \japxih^{1-j} \japx^{-\frac{p-j}{p-1}\sigma} \psi\left(\frac{\japx}{\japxih^{p-1}}\right) \nonumber \\
	& = & p|\xi|^{p-1} M_{p-j} \japxih^{1-j} \japx^{-\frac{p-j}{p-1}\sigma} \nonumber \\
	& - & p|\xi|^{p-1} M_{p-j} \japxih^{1-j} \japx^{-\frac{p-j}{p-1}\sigma} \left[1 - \psi\left(\frac{\japx}{\japxih^{p-1}}\right)\right]. \nonumber 
\end{eqnarray}
The term $p|\xi|^{p-1} M_{p-j} \japxih^{1-j} \japx^{-\frac{p-j}{p-1}\sigma}$ has order $p-j$ in $\xi$ and $-\frac{p-j}{p-1}\sigma$ in $x$. For the term $p|\xi|^{p-1} M_{p-j} \japxih^{1-j} \japx^{-\frac{p-j}{p-1}\sigma} \left[1 - \psi\left(\frac{\japx}{\japxih^{p-1}}\right)\right]$, let us make some considerations: first of all, by definition \eqref{z} of $\psi$, we see  that $1-\psi\left(\japx/\japxih^{p-1}\right)$ is supported for $2\japx\geq\japxih^{p-1}$. From this, for each $j=1,...,p-1$, we have
$$
\japx^{-\frac{p-j}{p-1}\sigma} \leq 2 \japxih^{-(p-j)\sigma},
$$
hence
$$
\japxih^{1-j}\japx^{-\frac{p-j}{p-1}\sigma} \leq 2\japxih^{1-j-(p-j)\sigma}.
$$
Therefore, $p|\xi|^{p-1} M_{p-j} \japxih^{1-j} \japx^{-\frac{p-j}{p-1}\sigma} \left[1 - \psi\left(\frac{\japx}{\japxih^{p-1}}\right)\right]$ can be considered of order less than or equal to $(p-1)(1-\sigma)$ in $\xi$ and order zero in $x$, for each $j=1,...,p-1$, and then the sum of these terms still has order less than or equal to $(p-1)(1-\sigma)$ in $\xi$ and zero in $x$. It follows that
$$
\sum_{j=1}^{p-1} \mathbf{op}(-\partialxi a_p(t) \xi^p \partialx \lambda_{p-j}(x,\xi)) = \sum_{j=1}^{p-1} p M_{p-j} a_p(t) |\xi|^{p-1} \japxih^{1-j} \japx^{-\frac{p-j}{p-1}\sigma} - B^{[(p-1)(1-\sigma)]}(t,x,D), 
$$
where 
\begin{equation}\label{Bp-1}B^{[(p-1)(1-\sigma)]}(t,x,\xi) := \sum_{j=1}^{p-1} pa_p(t)|\xi|^{p-1} M_{p-j} \japxih^{1-j} \japx^{-\frac{p-j}{p-1}\sigma} \left[1 - \psi\left(\frac{\japx}{\japxih^{p-1}}\right)\right],
\end{equation} that is, $B^{[(p-1)(1-\sigma)]}$ has order less than or equal to $(p-1)(1-\sigma)$ in $\xi$, order zero in $x$ and it depends on $M_{p-1}, ..., M_1$. 

With all these considerations we can write \eqref{conjugation_of_e_iP_e} as
\begin{eqnarray}\label{conjugation_of_e_iP_e_improved}
	(iP)_{\Lambda,K,\rho^\prime}&=& \partial_t + ia_p(t)D_x^p \nonumber \\
	& + & ia_{p-1}(t,x)D_x^{p-1} + \mathbf{op}\left(p M_{p-1} a_p(t) |\xi|^{p-1} \japx^{-\sigma}\right) +A_{K,\rho^\prime}^{[p-1]}(t,x,D) \nonumber \\
	& + & ia_{p-2}(t,x)D_x^{p-2} + \mathbf{op}\left(p M_{p-2} a_p(t) |\xi|^{p-1} \japxih^{-1} \japx^{-\frac{p-2}{p-1}\sigma}\right) + \mathbf{op}(d_{p-2}) + A_{K,\rho^\prime}^{[p-2]}(t,x,D) \nonumber \\
	& + & \cdots \nonumber \\
	& + & ia_2(t,x)D_x^2 + \mathbf{op}\left(p M_2 a_p(t) |\xi|^{p-1} \japxih^{-p+3} \japx^{-\frac{2}{p-1}\sigma}\right) + \mathbf{op}(d_2) + A_{K,\rho^\prime}^{[2]}(t,x,D) \nonumber\\
	& + & ia_1(t,x)D_x + \mathbf{op}\left(p M_1 a_p(t)|\xi|^{p-1} \japxih^{-p+2} \japx^{-\frac{1}{p-1}\sigma}\right) + \mathbf{op}(d_1) + A_{K,\rho^\prime}^{[1]}(t,x,D) \nonumber \\
	& + & K \langle D_x \rangle_h^{(p-1)(1-\sigma)} -  B^{[(p-1)(1-\sigma)]}(t,x,D)  + (r_0 + r_\infty)(t,x,D),
\end{eqnarray}
with $d_{p-m}$, $m=2,...,p-1$, $A_{K,\rho^\prime}^{[p-j]}$, $j=1,...,p-1$, $r_0$, $r_\infty$ and $B^{[(p-1)(1-\sigma)]}$ as described previously.
We are now ready for the desired estimates from below.

\begin{itemize}
	
	\item Estimates for level $p-1$: let
	$$
	\tilde{a}_{p-1}(t,x,D_x) := ia_{p-1}(t,x)D_x^{p-1} + \mathbf{op}\left(p M_{p-1} a_p(t) |\xi|^{p-1} \japx^{-\sigma}\right) +A_{K,\rho^\prime}^{[p-1]}(t,x,D).
	$$
	We have that
	\begin{equation}\label{level_p-1_provisory}
		\mathbf{Re} \, \tilde{a}_{p-1}(t,x,\xi) = - \mathbf{Im} \, a_{p-1}(t,x)\xi^{p-1} + p M_{p-1} a_p(t) |\xi|^{p-1} \japx^{-\sigma} + \mathbf{Re}\,  A_{K,\rho^\prime}^{[p-1]}(t,x,\xi).
	\end{equation}
	By the assumption (ii) in Theorem \ref{maintheorem1} we obtain
	$$
	|\mathbf{Im} \, a_{p-1}(t,x)\xi^{p-1}| \leq C_{a_{p-1}} \japx^{-\sigma} |\xi|^{p-1},
	$$
	and from \eqref{remainder_p-1},
	\begin{eqnarray}
		|\mathbf{Re} \, A_{K,\rho^\prime}^{[p-1]}(t,x,\xi)| & \leq & C_{T,K,\rho^\prime,M_{p-1}} \japxih^{p-1-1+\frac{1}{\theta}} \japx^{-\sigma} \nonumber \\
		& \leq & C_{T,K,\rho^\prime,M_{p-1}} \japxih^{p-1} h^{-1+\frac{1}{\theta}} \japx^{-\sigma}. \nonumber
	\end{eqnarray}
	Moreover, $|\xi| \geq 2h$ implies that $\japxih \leq \frac{\sqrt{5}}{2}|\xi|$. Therefore we can estimate \eqref{level_p-1_provisory} by
	\begin{equation}\label{level_p-1}
		\mathbf{Re} \, \tilde{a}_{p-1}(t,x,\xi) \geq \left(-C_{a_{p-1}} + pM_{p-1}C_{a_p} - C_{T,K,\rho^\prime,M_{p-1}}h^{-1+\frac{1}{\theta}}\right) \japx^{-\sigma} |\xi|^{p-1}.
	\end{equation}
	Now we can choose the real number $M_{p-1}$ such that $-C_{a_{p-1}} + pM_{p-1}C_{a_p} \geq 1$, which is equivalent to
	$$\boxed{
	M_{p-1} \geq \frac{1+C_{a_{p-1}}}{pC_{a_p}}.}
	$$
	If  we choose $h$ large enough (at the moment, we need only 
	$C_{T,K,\rho^\prime,M_{p-1}} h^{-1+\frac{1}{\theta}} \leq \frac{1}{2}$)
	we get
	$$
	\mathbf{Re} \, \tilde{a}_{p-1}(t,x,\xi) \geq \frac{1}{2} \japx^{-\sigma} |\xi|^{p-1}, \quad |\xi| \geq 2h.
	$$
	By Theorem 6 in \cite{ACsharpGarding}, the operator $\tilde{a}_{p-1}(t,x,D)$ can be decomposed as
	\begin{equation}\label{sharp_garding_level_p-1}
		\tilde{a}_{p-1}(t,x,D) = Q_+^{[p-1]}(t,x,D) + \tilde{r}^{[p-2]}(t,x,D),
	\end{equation}
	where $$\mathbf{Re} \, \left(Q_+^{[p-1]}(t,x,D)u,u\right)_{L^2} \geq 0, \quad u \in L^2(\R),$$ and $\tilde{r}^{[p-2]}(t,x,\xi) \in \mathbf{SG}_{\mu^\prime}^{p-2,-1-\sigma}(\R^2)$, $t \in [0,T]$, depends only on $M_{p-1}$ and $h$. Thus, we can put $\tilde{r}^{[p-2]}(t,x,D)$ together with all the other terms of order $p-2$ in \eqref{conjugation_of_e_iP_e_improved}, since it presents the same order in $\xi$ and stronger decay as $|x|\to\infty$.
	
	\item Estimates for level $p-2$: Let
	\begin{eqnarray}
		\tilde{a}_{p-2}(t,x,D) & := & ia_{p-2}(t,x)D_x^{p-2} + \mathbf{op}\left(pM_{p-2}a_p(t)|\xi|^{p-1}\japxih^{-1}\japx^{-\frac{p-2}{p-1}\sigma}\right) \nonumber \\
		& + & A_{K,\rho^\prime}^{[p-2]}(t,x,D) + d_{p-2}(t,x,D) + \tilde{r}^{[p-2]}(t,x,D), \nonumber
	\end{eqnarray}
	hence
	\begin{eqnarray}\label{level_p-2_provisory}
		\mathbf{Re} \, \tilde{a}_{p-2}(t,x,\xi) & = & -\mathbf{Im} \, a_{p-2}(t,x)\xi^{p-2} + pM_{p-2}a_p(t)|\xi|^{p-1}\japxih^{-1}\japx^{-\frac{p-2}{p-1}\sigma} \nonumber \\
		& + & \mathbf{Re} \, A_{K,\rho^\prime}^{[p-2]}(t,x,\xi) + \mathbf{Re} \, d_{p-2}(t,x,\xi) + \tilde{r}^{[p-2]}(t,x,\xi).
	\end{eqnarray}
	Since $d_{p-2}$ and $\tilde{r}^{[p-2]}$ depend only on $M_{p-1}$ and satisfy
	$$
	|\mathbf{Re} \, d_{p-2}(t,x,\xi) + \mathbf{Re} \, \tilde{r}^{[p-2]}(t,x,\xi)| \leq C_{M_{p-1}} \japxih^{p-2} \japx^{-\frac{p-2}{p-1}\sigma},
	$$
	by similar arguments and estimates to the ones we have used in level $p-1$ and with the help of \eqref{remander_p-j},  we estimate \eqref{level_p-2_provisory} as
	\begin{eqnarray}\label{level_p-2}
		& & \mathbf{Re} \, \tilde{a}_{p-2}(t,x,\xi) \nonumber \\
		& \geq & \left(-C_{a_{p-2}} + pM_{p-2}C_{a_p} - C_{M_{p-1}} - C_{T,K,\rho^\prime,M_{p-1},M_{p-2}} h^{-1+\frac{1}{\theta}}\right) |\xi|^{p-2} \japx^{-\frac{p-2}{p-1}\sigma}.
	\end{eqnarray}
	Now, choosing $M_{p-2}$ such that $-C_{a_{p-2}} + pM_{p-2}C_{a_p} - C_{M_{p-1}} \geq 1$, we obtain
	$$\boxed{
	M_{p-2} \geq \frac{1+C_{a_{p-2}}+C_{M_{p-1}}}{pC_{a_p}}}
	$$
	depending only on $M_{p-1}$. Again, if we choose $h$ large enough such that 
	$
	C_{T,K,\rho^\prime,M_{p-1},M_{p-2}} h^{-1+\frac{1}{\theta}} \leq \frac{1}{2}
	$,
	we get
	$$
	\mathbf{Re} \, \tilde{a}_{p-2}(t,x,\xi) \geq \frac{1}{2} \japx^{-\frac{p-2}{p-1}\sigma} |\xi|^{p-2}, \quad |\xi| \geq 2h.
	$$
	Again by Theorem 6 in \cite{ACsharpGarding}, it follows that
	\begin{equation}\label{sharp_garding_level_p-2}
		\tilde{a}_{p-2} = Q_+^{[p-2]}(t,x,D) + \tilde{r}^{[p-3]}(t,x,D),
	\end{equation}
	with $$\mathbf{Re}\, \left(Q_+^{[p-2]}(t,x,D)u,u\right)_{L^2} \geq 0,\quad u \in L^2(\R),$$ and $\tilde{r}^{[p-3]}(t,x,\xi) \in \mathbf{SG}_{\mu^\prime}^{p-3,-\frac{p-2}{p-1}\sigma-1}(\R^2)$, $t \in [0,T]$, depends only on $M_{p-1},M_{p-2}$, and $h$, so it can be put together with all the other terms of order $p-3$.
	
	\item For a generic level $(p-j)$, $j\geq2$: we call
	\begin{eqnarray}
		\tilde{a}_{p-j}(t,x,D) & := & ia_{p-j}(t,x)D_x^{p-j} + \mathbf{op}\left(pM_{p-j}a_p(t)|\xi|^{p-1}\japxih^{1-j}\japx^{-\frac{p-j}{p-1}\sigma}\right) \nonumber \\
		& + &   A_{K,\rho^\prime}^{[p-j]}(t,x,D) + d_{p-j}(t,x,D) + \tilde{r}^{[p-j]}(t,x,D). \nonumber
	\end{eqnarray}
	By similar arguments as in the previous steps, we get
	\begin{eqnarray}\label{level_p-j}
		&&\mathbf{Re} \, \tilde{a}_{p-j}(t,x,\xi)  \geq \nonumber \\
		& & \left(-C_{a_{p-j}} + pM_{p-j}C_{a_p} - C_{M_{p-1},...,M_{p-j+1}} - C_{T,K,\rho^\prime,M_{p-1},...,M_{p-j}}h^{-1+\frac{1}{\theta}}\right) |\xi|^{p-j}\japx^{-\frac{p-j}{p-1}\sigma}.
	\end{eqnarray}
	By choosing $M_{p-j}$ such that $-C_{a_{p-j}} + pM_{p-j}C_{a_p} - C_{M_{p-1},...,M_{p-j+1}} \geq 1$, we obtain
	$$
	\boxed{M_{p-j} \geq \frac{1+C_{a_{p-j}}+C_{M_{p-1},...,M_{p-j+1}}}{pC_{a_p}}}
	$$
	depending on $M_{p-1},...,M_{p-j+1}$, and choosing $h$ large such that 
	$C_{T,K,\rho^\prime,M_{p-1},...,M_{p-j}}h^{-1+\frac{1}{\theta}} \leq \frac{1}{2}
	$, we have
	$$
	\mathbf{Re} \, \tilde{a}_{p-j}(t,x,\xi) \geq \frac{1}{2} \japx^{-\frac{p-j}{p-1}\sigma} |\xi|^{p-j}, \quad |\xi| \geq 2h.
	$$
	By Theorem 6 in \cite{ACsharpGarding}
	\begin{equation}\label{sharp_garding_level_p-j}
		\tilde{a}_{p-j}(t,x,D) = Q_+^{[p-j]}(t,x,D) + \tilde{r}^{[p-j-1]}(t,x,D),
	\end{equation}
	where $$\mathbf{Re}\,\left(Q_+^{[p-j]}(t,x,D)u,u\right)_{L^2} \geq 0, \quad u \in L^2(\R)$$ and $r^{[p-j-1]}(t,x,\xi) \in \mathbf{SG}_{\mu^\prime}^{p-j-1,-1-\frac{p-j}{p-1}\sigma}(\R^2)$ depends on $M_{p-1},...,M_{p-j+1},h$.
	
\end{itemize}

At this point, after the choice of all the parameters $M_{p-1},...,M_1$, we get that if we choose $h$ large enough (at the moment we need $C_{T,K,\rho^\prime,M_{p-1},...,M_{1}}h^{-1+\frac{1}{\theta}} \leq \frac{1}{2}$) the conjugated operator can be expressed as
\begin{eqnarray}
	 (iP)_{\Lambda,K,\rho^\prime} &=& \partial_t + ia_p(t)D_x^p \nonumber \\
	& + & Q_+^{[p-1]}(t,x,D) + Q_+^{[p-2]}(t,x,D) + \cdots + Q_+^{[2]}(t,x,D)+ Q_+^{[1]}(t,x,D) \nonumber \\
	& + & K \langle D_x \rangle_h^{(p-1)(1-\sigma)} - B^{[(p-1)(1-\sigma)]}(t,x,D) + (r_0 + r_\infty)(t,x,D). \nonumber
\end{eqnarray}

Now, recalling the definition \eqref{Bp-1} of $B^{[(p-1)(1-\sigma)]}$ we can choose $$\boxed{K \geq K_0(M_{p-1},...,M_1)}$$ large enough such that
$$
\mathbf{Re} \, \left(K\japxih^{(p-1)(1-\sigma)} - B^{[(p-1)(1-\sigma)]}(t,x,\xi)\right) \geq 0,
$$
and by applying Theorem 6 in \cite{ACsharpGarding} once more we obtain
\begin{eqnarray}\label{sharp_garding_last_step}
  (iP)_{\Lambda,K,\rho^\prime}&= &   \partial_t + ia_p(t)D_x^p +\sum_{j=1}^{p-1} Q_+^{[p-j]}(t,x,D) \nonumber \\
	& + &  Q_+^{[(p-1)(1-\sigma)]}(t,x,D) + (r_0 + r_\infty)(t,x,D).
\end{eqnarray}
Summing up, for $\rho'$ small enough, choosing, in the order, $M_{p-1},...,M_1,K$, and $h$ large enough, precisely  $$\boxed{C_{T,K,\rho^\prime,M_{p-1},...,M_{1}}h^{-1+\frac{1}{\theta}} \leq \frac{1}{2}\quad{\rm and}\quad h>h_0,}$$
enlarging the parameter $h_0 > 0$ given in Proposition \ref{proposition_inverse_of_e^lambda} if necessary, we obtain that formula \eqref{sharp_garding_last_step} holds.

By these estimates, we can assert the next proposition.

\begin{proposition}\label{proposition_auxiliar_energy}
	Assume $\rho' >0$ sufficiently small. Then there exist $M_{p-1},...,M_1>0$, $K>0$ and $h_0=h_0(K,M_{p-1},...,M_1,T,\rho^\prime)>0$ such that for every $h>h_0$ the Cauchy problem associated to the conjugated operator \eqref{sharp_garding_last_step} is well-posed in $H^m(\R)$. More precisely, for any Cauchy data $\tilde{f}\in C([0,T];H^m(\R))$ and $\tilde{g}\in H^m(\R)$, there exists a unique solution $v\in C([0,T];H^m(\R))\cap C^1([0,T];H^{m-p}(\R))$ such that the following energy estimates holds
	$$
	\|v(t)\|_{H^m}^2 \leq C \left( \|\tilde{g}\|_{H^m}^2 + \int_0^t \|\tilde{f}(\tau)\|_{H^m}^2 d\tau \right), \quad t \in [0,T].
	$$
\end{proposition}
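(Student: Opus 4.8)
The plan is to derive the energy estimate directly via the classical energy method applied to the conjugated operator in the form \eqref{sharp_garding_last_step}, and then obtain existence and uniqueness by the standard duality/approximation argument. To set things up, I would first remark that all the operators appearing in \eqref{sharp_garding_last_step} act continuously on the scale $H^m(\R)$: $ia_p(t)D_x^p$ is the principal part with real symbol (since $a_p$ is real-valued), $\sum_{j=1}^{p-1}Q_+^{[p-j]}(t,x,D)$ and $Q_+^{[(p-1)(1-\sigma)]}(t,x,D)$ are the positive operators produced by the sharp G{\aa}rding decompositions of Theorem 6 in \cite{ACsharpGarding}, $r_0$ has order zero, and $r_\infty$ satisfies an estimate of the form \eqref{regularizingestimate}, hence is in particular a smoothing operator bounded on every $H^m$. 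Since it is enough to treat the case $m=0$ after conjugating with $\langle D_x\rangle^m$ (which commutes with $D_x^p$ and changes the lower order operators only by terms of the same structure, preserving positivity up to bounded remainders), I would reduce to proving the $L^2$ estimate.

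The core computation is then the following. Writing $Lv := (iP)_{\Lambda,K,\rho'}v = \partial_t v + ia_p(t)D_x^p v + \sum_{j=1}^{p-1}Q_+^{[p-j]}v + Q_+^{[(p-1)(1-\sigma)]}v + (r_0+r_\infty)v$, I compute
\[
\frac{d}{dt}\|v(t)\|_{L^2}^2 = 2\,\mathbf{Re}\,\langle \partial_t v, v\rangle_{L^2} = 2\,\mathbf{Re}\,\langle Lv, v\rangle_{L^2} - 2\,\mathbf{Re}\Big\langle \Big(ia_p D_x^p + \sum_j Q_+^{[p-j]} + Q_+^{[(p-1)(1-\sigma)]} + r_0 + r_\infty\Big)v, v\Big\rangle_{L^2}.
\]
The term $\mathbf{Re}\,\langle ia_p(t)D_x^pv,v\rangle_{L^2}=0$ because $a_p(t)D_x^p$ is self-adjoint with real symbol. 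The terms $-2\,\mathbf{Re}\,\langle Q_+^{[p-j]}v,v\rangle_{L^2}$ and $-2\,\mathbf{Re}\,\langle Q_+^{[(p-1)(1-\sigma)]}v,v\rangle_{L^2}$ are $\leq 0$ by the nonnegativity statements obtained in Section \ref{estimatesfrombelow}. Finally $|\langle(r_0+r_\infty)v,v\rangle_{L^2}|\leq C\|v\|_{L^2}^2$ since both have order $\leq 0$, and $2|\mathbf{Re}\,\langle Lv,v\rangle_{L^2}|\leq \|Lv\|_{L^2}^2+\|v\|_{L^2}^2$. Hence
\[
\frac{d}{dt}\|v(t)\|_{L^2}^2 \leq \|Lv(t)\|_{L^2}^2 + C\|v(t)\|_{L^2}^2 = \|\tilde f(t)\|_{L^2}^2 + C\|v(t)\|_{L^2}^2,
\]
and Gronwall's inequality yields $\|v(t)\|_{L^2}^2\leq e^{CT}\big(\|\tilde g\|_{L^2}^2 + \int_0^t\|\tilde f(\tau)\|_{L^2}^2\,d\tau\big)$, which is the asserted estimate (with $H^m$ in place of $L^2$ after the reduction above, noting that the smoothing remainder $r_\infty$, although in general $t$-dependent through the coefficients $a_{p-j}$, contributes only a bounded term uniformly in $t\in[0,T]$).

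With the a priori estimate in hand, existence and uniqueness follow by the standard scheme: uniqueness is immediate from the energy estimate applied to the difference of two solutions with $\tilde f=0$, $\tilde g=0$. For existence, I would dualize: the same energy estimate applied to the formal adjoint $L^*$ (which has the same structure — principal part $ia_p D_x^p$ still self-adjoint, the adjoints of the $Q_+$'s are again nonnegative up to order-zero terms, and the adjoint of $r_0+r_\infty$ is of order $\leq 0$) gives a backward-in-time estimate for $L^*$; by the Hahn–Banach/Lax–Milgram duality argument (as in the classical treatment of linear evolution equations, cf. \cite{ascanelli_chiara_zhanghirati_well_posedness_of_cauchy_problem_for_p_evo_equations}) one obtains a weak solution $v\in C([0,T];H^m(\R))$, and then $\partial_t v = \tilde f - ia_pD_x^pv - (\ldots)v \in C([0,T];H^{m-p}(\R))$ from the equation, giving $v\in C^1([0,T];H^{m-p}(\R))$. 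The only point requiring a little care — and the main technical obstacle — is checking that the sharp G{\aa}rding remainders and the infinite-order smoothing term $r_\infty$ are genuinely bounded on the fixed scale $H^m(\R)$ uniformly in $t$ (for $r_\infty$ this is clear from \eqref{regularizingestimate}, and for the $\mathbf{SG}^{\ldots}_{\mu'}$ remainders it follows from the standard $SG$-calculus continuity on Sobolev spaces); once this is granted the argument is the routine energy method.
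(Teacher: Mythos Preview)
Your proposal is correct and follows essentially the same route as the paper: the core is the same $L^2$ energy computation exploiting that $\mathbf{Re}\,\langle ia_p D_x^p v,v\rangle=0$, the nonnegativity of the $Q_+$ terms from the sharp G{\aa}rding decomposition, and the zero-order boundedness of $r_0+r_\infty$, followed by Gronwall. The paper simply states that the resulting energy inequality ``implies the well-posedness of \eqref{auxiliary_cauchy_problem_gevrey} in $H^m(\R)$'' without spelling out the reduction to $m=0$ or the duality/existence argument you sketch, so your write-up is in fact slightly more detailed than the original on those points.
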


\begin{proof}
	First of all, let us consider the choices of the constants which make formula \eqref{sharp_garding_last_step} true. Then it follows that
	\begin{eqnarray}\label{before_gronwall_1}
		\frac{d}{dt} \|v(t)\|_{L^2}^2 & = & 2 \mathbf{Re} \, \langle P_{\Lambda,K,\rho^\prime}(t,x,D) v(t), v(t)\rangle_{L^2} - 2 \mathbf{Re} \, \langle i a_p(t)D_x^p v(t), v(t)\rangle_{L^2} \nonumber \\
		& - & 2 \mathbf{Re} \, \left\langle \sum_{j=1}^{p-1} Q_+^{[p-j]}(t,x,D) v(t), v(t) \right\rangle_{L^2} - 2 \mathbf{Re} \, \left\langle Q_+^{[(p-1)(1-\sigma)]}(t,x,D) v(t), v(t) \right\rangle_{L^2} \nonumber \\
		& - & 2 \mathbf{Re} \, \left\langle r_0(t,x,D) v(t), v(t)\right\rangle_{L^2} \nonumber \\
		& \leq & 2 \|P_{\Lambda,K,\rho^\prime} v(t)\|_{L^2} \|v(t)\|_{L^2} + 2 \|r_0 v(t)\|_{L^2} \|v(t)\|_{L^2},
	\end{eqnarray}
	where, in order to obtain this inequality, we have used the Cauchy-Schwarz inequality in the first and last terms and that 
	$$
	\mathbf{Re}\left\langle ia_p(t)D_x^pv(t),v(t)\right\rangle_{L^2}=0, \quad \mathbf{Re}\left\langle Q_+^{p-j}v(t),v(t)\right\rangle_{L^2}\geq0, \quad \mathbf{Re}\left\langle Q_+^{[(p-1)(1-\sigma)]}v(t),v(t)\right\rangle_{L^2}\geq0.
	$$
	Since  $r_0$ has order zero, we still can estimate in \eqref{before_gronwall_1} as follows
	$$
	\frac{d}{dt}\|v(t)\|_{L^2}^2 \leq C^\prime \left(  \| v(t) \|_{L^2}^2 + \|P_{\Lambda,K,\rho^\prime} v(t)\|_{L^2}^2\right).
	$$
	Finally, by using Gronwall inequality and taking to account that $e^{C^\prime t}\leq e^{C^\prime T}=:C$, for $t\in[0,T]$, we get the energy estimate
	$$
	\|v(t)\|_{L^2}^2 \leq C \left(\|v(0)\|_{L^2}^2 + \int_0^t \|P_{\Lambda,K,\rho^\prime}v(\tau)\|_{L^2}^2 d\tau\right),
	$$
	which implies the well-posedness of \eqref{auxiliary_cauchy_problem_gevrey} in $H^m(\R)$.
\end{proof}

\section{Proof of Theorem \ref{maintheorem1}}\label{proofmainthm}

Given $\theta>1$ satisfying the hypothesis of Theorem \ref{maintheorem1}, $m\in\R$, $\rho>0$, and initial data
$$
f \in C\left([0,T];H_{\rho;\theta}^m(\R)\right), \quad g \in H_{\rho;\theta}^m(\R).
$$
Consider $\rho' <\rho$ and the positive constants $M_{p-1},...,M_1,K,h_0$ for which the Proposition \ref{proposition_auxiliar_energy} holds. We know that both symbols $\Lambda$ and $K(T-t)\langle\cdot\rangle_h^{(p-1)(1-\sigma)}$ have order $(p-1)(1-\sigma)<1/\theta$, hence by Proposition \ref{contgev} it follows that
\begin{eqnarray}
	f_{\Lambda,K,\rho^\prime} := Q_{\Lambda,K,\rho^\prime}(t,x,D)f & \in &  C\left([0,T];H^m(\R)\right) \nonumber \\
	g_{\Lambda,K,\rho^\prime} := Q_{\Lambda,K,\rho^\prime}(t,x,D)g & \in & H^m(\R) \nonumber,
\end{eqnarray}
for $\rho^\prime<\rho$. By Proposition \ref{proposition_auxiliar_energy}, there exists a unique solution $v\in C\left([0,T];H^m(\R)\right)$ to the Cauchy problem
$$
\left\lbrace \begin{array}{l}
	P_{\Lambda,K,\rho^\prime}v(t,x) = f_{\Lambda,K,\rho^\prime}(t,x) \\ 
	v(0,x) = g_{\Lambda,K,\rho^\prime}(x)
\end{array}, \right. \quad (t,x) \in [0,T] \times \R,
$$
satisfying the energy estimate
\begin{equation}\label{auxiliar_energy_estimate}
	\|v(t)\|_{H^m}^2 \leq C \left(\|g_{\Lambda,K,\rho^\prime}\|_{H^m}^2 + \int_0^t \|f_{\Lambda,K,\rho^\prime}(\tau)\|_{H^m}^2 d\tau\right), \quad t \in [0,T].
\end{equation}

By setting $u:=\left(Q_{\Lambda,K,\rho^\prime}(t,x,D)\right)^{-1}v$, we obtain a solution to the original Cauchy problem \eqref{cauchy_problem_gevrey}.\\
The next step is to figure out which space the solution $u$ belongs to. Notice that
by the definitions of $\Lambda_{K,\rho^\prime}$ and $\Lambda$, and by Lemma \ref{lemma_4_of_AA22} we may write
$$
u(t,x) = \hskip2pt ^R \left(e^{-\Lambda}(x,D)\right) \sum_j (-r(x,D))^j e^{-K(T-t)\langle D \rangle_h^{(p-1)(1-\sigma)}} e^{-\rho^\prime \langle D \rangle_h^\frac{1}{\theta}} v(t,x), \quad v \in H^m(\R),
$$
but $v \in H^m(\R)$ implies that $e^{-\rho^\prime\langle D \rangle_h^\frac{1}{\theta}}v=:u_1 \in H_{\rho^\prime;\theta}^m(\R)$, hence
$$
u(t,x) = \hskip2pt ^R \left(e^{-\Lambda}(x,D)\right) \sum_j (-r(x,D))^j e^{-K(T-t)\langle D \rangle_h^{(p-1)(1-\sigma)}}u_1, \quad u_1 \in H_{\rho^\prime;\theta}^m(\R).
$$
Notice that, for every $\delta_1>0$, we have
$$
e^{-K(T-t)\langle D \rangle_h^{(p-1)(1-\sigma)}}u_1 = \underbrace{e^{-K(T-t)\langle D \rangle_h^{(p-1)(1-\sigma)}} e^{-\delta_1 \langle D \rangle_h^\frac{1}{\theta}}}_{\text{order zero}} e^{\delta_1 \langle D \rangle_h^\frac{1}{\theta}} u_1 =: u_2 \in H_{\rho^\prime-\delta_1;\theta}^m(\R),
$$
and since $\sum_j(-r(x,D))^j$ has order zero, $\sum_j(-r(x,D))^j u_2 =: u_3 \in H_{\rho^\prime-\delta_1;\theta}^m(\R)$, which allows us to write
$$
u(t,x) =  \hskip2pt ^R \left(e^{-\Lambda}(x,D)\right) \sum_j (-r(x,D))^j u_2 = \hskip2pt ^R \left(e^{-\Lambda}(x,D)\right) u_3, \quad u_3 \in H_{\rho^\prime-\delta_1;\theta}^m(\R).
$$
By Proposition \ref{contgev}, we have that $\hskip2pt ^R \left(e^{-\Lambda}(x,D)\right)$ maps $H_{\rho;\theta}^m(\R)$ into $H_{\rho-\delta_2;\theta}^m(\R)$, for any $\delta_2>0$, and we can assert that $u(t,\cdot)\in H_{\rho^\prime -\delta;\theta}^m(\R)$ for all $\delta>0$, $t\in[0,T]$. Notice that the solution is less regular than the Cauchy data, in the sense that this solution exhibits a loss in the coefficient of the exponential weight. If we set $\tilde{\rho}:=\rho^\prime-\delta$, it follows from \eqref{auxiliar_energy_estimate} that
\begin{eqnarray}
	\|u(t)\|_{H_{\tilde{\rho};\theta}^m}^2 & = & \|\left(Q_{\Lambda,K,\rho^\prime}(t,\cdot,D)\right)^{-1} v(t)\|_{H_{\tilde{\rho};\theta}^m}^2 \leq C_1 \|v(t)\|_{H^m}^2 \nonumber \\
	& \leq & C_2 \left(\|g_{\Lambda,K,\rho^\prime}\|_{H^m}^2 + \int_0^t \|f_{\Lambda,K,\rho^\prime}(\tau)\|_{H^m}^2 d\tau\right) \nonumber \\
	& \leq & C_3 \left(\|g\|_{H_{\rho;\theta}^m}^2 + \int_0^t \|f(\tau)\|_{H_{\rho;\theta}^m}^2 d\tau\right), \quad t \in [0,T].
\end{eqnarray}
The conclusion here is: if we take the data $f\in C\left([0,T];H_{\rho;\theta}^m(\R)\right)$ and $g\in H_{\rho;\theta}^m(\R)$, for some $m\in\R$ and $\rho>0$, then we find a solution $u$ to the Cauchy problem associated with the operator $P$ with initial data $f$ and $g$ which satisfies
$$
u \in C\left([0,T];H_{\tilde{\rho};\theta}^m(\R)\right), \quad \tilde{\rho} < \rho.
$$

To prove the uniqueness of the solution, let us consider $u_1, u_2 \in C\left([0,T];H_{\tilde{\rho};\theta}^m(\R)\right)$ such that
$$
\left\lbrace \begin{array}{l}
	Pu_j=f \\ 
	u_j(0)=g
\end{array}  \right., \quad j=1,2.
$$
By shrinking $\rho^\prime$ if necessary, we can find new parameters $M_{p-1}^\prime,...,M_1^\prime>0$, $K^\prime>0$ and $h_0^\prime>0$  in order to apply Proposition \ref{proposition_auxiliar_energy} again and obtain that the Cauchy problem associated with the conjugated operator
$$
P_{\Lambda^\prime,K^\prime,\rho^\prime} := Q_{\Lambda^\prime,K^\prime,\rho^\prime}\circ(iP)\circ Q_{\Lambda^\prime,K^\prime,\rho^\prime}^{-1}
$$
is well-posed in $H^m(\R)$, where $Q_{\Lambda^\prime,K^\prime,\rho^\prime}$ comes from the new choice of parameters. Proposition \ref{proposition_auxiliar_energy} allows us to conclude that $
Q_{\Lambda^\prime,K^\prime,\rho^\prime}f, \ Q_{\Lambda^\prime,K^\prime,\rho^\prime}g, \ Q_{\Lambda^\prime,K^\prime,\rho^\prime} u_j \in H^m(\R)$ and satisfy
$$
\left\lbrace \begin{array}{l}
	P_{\Lambda^\prime,K^\prime,\rho^\prime}Q_{\Lambda^\prime,K^\prime,\rho^\prime}u_j=Q_{\Lambda^\prime,K^\prime,\rho^\prime}f \\ 
	Q_{\Lambda^\prime,K^\prime,\rho^\prime} u_j(0)=Q_{\Lambda^\prime,K^\prime,\rho^\prime}g
\end{array}  \right., \quad j=1,2,
$$
hence $Q_{\Lambda^\prime,K^\prime,\rho^\prime}u_1=Q_{\Lambda^\prime,K^\prime,\rho^\prime}u_2$, which implies that $u_1=u_2$.

\begin{remark}\label{bramble_scramble}
	In this final remark we observe that we could prove a slightly more general version of Theorem \ref{maintheorem1}, which is a kind of sufficiency counterpart for the main theorem in \cite{AACpevolGevreynec}. More into the point, we may consider the following hypotheses on the decay of the coefficients $a_{p-j}(t,x), j=1,\ldots,p-1$:
	\begin{equation}\label{the_panthom_of_the_opera}
		|\partial^{\beta}_{x} a_{p-j}(t,x)| \leq C_{p-j}^{\beta+1} \beta!^{\theta_0} \langle x \rangle^{-\sigma_{p_j}-\beta},
	\end{equation}
	for some $1 < \theta_0$ and $\sigma_{p-j}\in (0,1).$ Then the condition $\theta_0 \leq \theta < \frac{1}{(p-1)(1-\sigma)}$ for $\mathcal{H}^{\infty}_{\theta}(\R)$ well-posedness of \eqref{cauchy_problem_gevrey} would be:
	\begin{equation}\label{scars_of_time}
	\Xi < \frac{1}{\theta},
	\end{equation}
	where $\Xi$ is given by \eqref{equation_thesis_main_theorem}. In the next lines we shall explain how to obtain this improvement using the ideas developed in this paper.
	
	To prove the sufficiency of \eqref{scars_of_time} under \eqref{the_panthom_of_the_opera} we just need to consider a slightly different change of variables (cf. Section \ref{sec:changeofvariable}):
	$$
	\lambda_{p-j}(x,\xi) = M_{p-j} w(\xi h^{-1}) \langle \xi \rangle^{-(j-1)}_{h} \int_0^x \langle y \rangle^{-\sigma_{p-j}} \psi \left( \frac{\langle y \rangle}{\langle \xi \rangle^{p-1}_h}\right) dy, \quad j =1, \ldots, p-1,
	$$
	and 
	$$
	\Lambda_{K, \rho'}(t,\xi) = K(T-t)\langle \xi \rangle^{\Xi}_h + \rho'\langle \xi \rangle^{\frac{1}{\theta}}_{h}.
	$$
	Arguing as in Lemma \ref{estimatesforlambda} one gets that 
	$$
	\Lambda = \sum_{j = 1}^{p-1} \lambda_{p-j} \in \textrm{\textbf{S}}^{\Xi}_{\mu}(\R^2) \cap \textrm{\textbf{SG}}_\mu^{0,\Theta}(\R^2),
	$$
	where 
	$$
	\Theta := \max_{1 \leq j \leq p-1} \left\{ (1-\sigma_{p-j}) - \frac{j-1}{p-1} \right\}.
	$$
	Thus, we still have that $e^{\rho'\langle \xi \rangle^{\frac{1}{\theta}}_{h}}$ is the leading part of the operator
	$$
	Q_{\Lambda,K,\rho^\prime}(t,x,D)=e^{\Lambda_{K,\rho^\prime}}(t,D)\circ e^\Lambda(x,D).
	$$
	So, due to the hypotheses $\Xi < \frac{1}{\theta}$, $1 < \theta_0 \leq \theta$ and \eqref{the_panthom_of_the_opera}, one can still run the change of variable argument and conclude $\mathcal{H}^{\infty}_{\theta}(\R)$ well-posedness for \eqref{cauchy_problem_gevrey}.
	
	Of course the proof of the result described in this remark is even more technical than the proof of Theorem \ref{maintheorem1}. For this reason we decided to present a simpler result and leave the details of this more general version to the interested reader. 
\end{remark}

\textbf{Acknowledgements.} The authors wish to thank the referee for his/her precious comments and for suggesting new possible developments for our research on the equations treated in the paper.





\begin{thebibliography}{AAA}

\bibitem{Arias_GS} A. Arias Junior, {\it Some remarks on the Cauchy Problem for Schr\"odinger type equations in Gelfand-Shilov space}, J. Anal. Math. To appear. Preprint version at https://arxiv.org/abs/2309.08194.

\bibitem{AAC3evolGelfand-Shilov}
A. Arias Junior, A. Ascanelli, M. Cappiello, {\sl The Cauchy problem for $3$-evolution equations with data in Gelfand-Shilov spaces}, J. Evol. Equ. 22 (2022), Paper
No. 33, 40 pp.

\bibitem{AAC3evolGevrey}
A. Arias Junior, A. Ascanelli, M. Cappiello, {\sl Gevrey well-posedness for $3$-evolution equations with variable coefficients},
Ann. Sc. Norm. Super. Pisa Cl. Sci. (5)
Vol. XXV (2024), 1-31.

\bibitem{AACpevolGevreynec}
A. Arias Junior, A. Ascanelli, M. Cappiello, {\sl
On the Cauchy problem for $p$-evolution equations with variable coefficients: A necessary condition for Gevrey well-posedness}, J. Differential Equations \textbf{405} (2024), 220-246.

\bibitem{ACsharpGarding}
A. Arias Junior, M. Cappiello, {\em On the sharp G{\aa}rding inequality for operators with
	polynomially bounded and Gevrey regular symbols}, Mathematics (2020) 8, 1938.
 
 \bibitem{ABbis}A.~Ascanelli, C.~Boiti, {\sl Semilinear p-evolution equations in Sobolev spaces}, J. Differential Equations {\bf 260} (2016), 7563-7605. 
 
\bibitem{ascanelli_chiara_zhanghirati_well_posedness_of_cauchy_problem_for_p_evo_equations}
A. Ascanelli, C. Boiti, L. Zanghirati, {\em Well-posedness of the Cauchy problem for p-evolution equations.} J. Differential Equations \textbf{253} (10) (2012), 2765-2795.

 \bibitem{ascanelli_chiara_zanghirati_necessary_condition_for_H_infty_well_posedness_of_p_evo_equations}
A. Ascanelli, C. Boiti, L. Zanghirati, {\em A Necessary condition for $ H^{\infty}$ well-posedness of $ p $-evolution equations.} Advances in Differential Equations \textbf{21} (2016), 1165-1196.

\bibitem{scncpp2}A.Ascanelli, M.Cappiello, {\sl H\"older continuity in time for SG hyperbolic systems}, J. Differential Equations 244 (2008), 2091-2121.

\bibitem{ACJMPA}
A. Ascanelli, M. Cappiello, {\em Schr{\"o}dinger-type equations in Gelfand-Shilov spaces.} 
  J. Math. Pures Appl. \textbf{132} (2019), 207-250.
 
\bibitem{beam} A.~Ascanelli, M.~Cicognani, F.~Colombini: {\sl The global Cauchy problem for a vibrating beam equation}, J. Differential Equations 247 (2009) 1440-1451.

\bibitem{ACR}
A. Ascanelli, M. Cicognani, M.Reissig, {\em The interplay between decay of the data and regularity of the solution in Schr{\"o}dinger equations.} Ann. Mat. Pura Appl. \textbf{199} (2020), n. 4, 1649-1671.


\bibitem{CappielloRodino}
M. Cappiello, L. Rodino, \textit{SG-pseudodifferential operators and Gelfand-Shilov spaces}. Rocky Mountain J. Math. \textbf{36} (2006) n. 4, 1117-1148.

\bibitem{CC}
M. Cicognani, F. Colombini,  {\em The Cauchy problem for p-evolution equations.}
  Trans. Amer. Math. Soc. \textbf{362} (9) (2010), 4853-4869.
  
\bibitem{CRJEECT} 
M. Cicognani, M.  Reissig, {\em Well-posedness for degenerate Schr\'odinger equations.} Evolution Equations and Control Theory \textbf{3} (1) (2014), 15-33.


\bibitem{ichinose_remarks_cauchy_problem_schrodinger_necessary_condition}
W. Ichinose, {\em Some remarks on the Cauchy problem for Schr{\"o}dinger type equations.}
 Osaka J.  Math. \textbf{21} (3) (1984) 565-581.


\bibitem{I2} W.~Ichinose, {\em Sufficient condition on $H^{\infty }$
well-posedness for Schr\"odinger type equations},
Comm. Partial Differential Equations, {\bf 9}, n.1 (1984), 33-48.

\bibitem{JM}
A. Jeffrey and M.N.B.  Mohamad, \textit{Exact solutions to the KdV-Burgers' equation},
Wave Motion \textbf{14} (1991) n. 4, 369-375.

\bibitem{KB}
K. Kajitani, A. Baba, {\em The Cauchy problem for Schr{\"o}dinger type equations.}
Bull. Sci. Math.\textbf{119} (5), (1995), 459-473.
 


\bibitem{KN}
K. Kajitani, T. Nishitani, {\em The hyperbolic Cauchy problem.} Lecture Notes in Mathematics \textbf{1505} Springer-Verlag, Berlin, 1991.

\bibitem{KW}
K. Kajitani, S. Wakabayashi, \textit{Microhyperbolic operators in Gevrey classes},  Publ. Res. Inst. Math. Sci. {\bf 25} (1989) n. 2, 169-221. 


\bibitem{Kawahara}
T. Kawahara, {\it Oscillatory solitary waves in dispersive media}, J. Phys. Soc. Japan \textbf{33} (1972) n. 1, 260-264.

\bibitem{kumano}H. Kumano-Go, {\em Pseudo-Differential Operators}, MIT Press, Cambridge, London, 1982.

\bibitem{JM}
A. Jeffrey, M.N.B.  Mohamad, \textit{Exact solutions to the KdV-Burgers' equation},
Wave Motion \textbf{14} (1991) n. 4, 369-375.

\bibitem{Marchenko}
A.V. Marchenko, {\it Long waves in shallow liquid under ice cover}, Appl. Math. and Mech. (PMM) \textbf{52} (1988) n. 2, 180-183.

\bibitem{mizohata2014}
S. Mizohata, {\it On the Cauchy problem}, Academic Press. Volume 3 (2014).
 
\bibitem{Mizo}
S. Mizohata, {\it Some remarks on the Cauchy problem}, J. Math. Kyoto Univ. \textbf{1} (1961) n. 1, 109-127.


\bibitem{Rodino_linear_partial_differential_operators_in_gevrey_spaces}
 L. Rodino, {\em Linear partial differential operators in Gevrey spaces.}
World Scientific (1993). 

\bibitem{Zanghirati}
L. Zanghirati, Pseudodifferential operators of infinite order and Gevrey classes, Ann. Univ Ferrara, Sez. VII, Sc. Mat., \textbf{31} (1985), pp. 197-219.

\end{thebibliography}
\end{document}